\DeclareFontFamily{OMX} {MnSymbolE}{}
\DeclareFontShape{OMX}{MnSymbolE}{m}{n}{
  <-6> MnSymbolE5
  <6-7> MnSymbolE6
  <7-8> MnSymbolE7
  <8-9> MnSymbolE8
  <9-10> MnSymbolE9
  <10-12> MnSymbolE10
  <12-> MnSymbolE12}{}
\DeclareFontShape{OMX}{MnSymbolE}{b}{n}{
  <-6> MnSymbolE-Bold5
  <6-7> MnSymbolE-Bold6
  <7-8> MnSymbolE-Bold7
  <8-9> MnSymbolE-Bold8
  <9-10> MnSymbolE-Bold9
  <10-12> MnSymbolE-Bold10
  <12-> MnSymbolE-Bold12}{}
\DeclareSymbolFont{MnSyE} {OMX} {MnSymbolE}{m}{n}
\DeclareMathDelimiter{\llangle}{\mathopen}{MnSyE}{116}{MnSyE}{120}
\DeclareMathDelimiter{\rrangle}{\mathopen}{MnSyE}{121}{MnSyE}{125}
\DeclareMathOperator\ima{im}   %used for image of a map
\DeclareMathOperator\dd{d}
\DeclareMathOperator\diag{\diag}
\DeclareMathOperator{\Res}{Res}
\DeclareMathOperator{\End}{End}
\DeclareMathOperator{\id}{id}
\renewcommand\paragraph{\@startsection{paragraph}{4}{\z@}%
            {-2.5ex\@plus -1ex \@minus -.25ex}%
            {1.25ex \@plus .25ex}%
            {\normalfont\normalsize\bfseries}}
\begin{document}

\title{Darboux and Calapso transforms of meromorphically isothermic surfaces}
\author{Andreas Fuchs}
\date{Jan 17, 2019}

\maketitle

\newcommand{\Com}{\mathbb C}
\newcommand{\G}{\mathbb G}
\newcommand{\Hy}{\mathbb H}
\newcommand{\Nat}{\mathbb N}
\newcommand{\Q}{\mathbb Q}
\newcommand{\Pro}{\mathbb P}
\newcommand{\R}{\mathbb R}
\newcommand{\Z}{\mathbb Z}

\newcommand{\Rea}{\Re}
\newcommand{\Ima}{\Im}

\newcommand{\A}{\mathcal A}
\newcommand{\cB}{\mathcal B}
\newcommand{\cC}{\mathcal C}
\newcommand{\cE}{\mathcal E}
\newcommand{\Fr}{\mathcal F}
\newcommand{\I}{\mathcal I}
\newcommand{\J}{\mathcal J}
\newcommand{\K}{\mathcal K}
\newcommand{\Li}{\mathcal L}
\newcommand{\mon}{\mathcal M}
\newcommand{\N}{\mathcal N}
\newcommand{\Ro}{\mathcal R}
\newcommand{\Ord}{\mathcal O}
\newcommand{\ord}{\scalebox{0.6}{$\mathcal O$}}
\newcommand{\cS}{\mathcal S}
\newcommand{\T}{\mathcal T}
\newcommand{\cU}{\mathcal U}
\newcommand{\cV}{\mathcal V}
\newcommand{\Zc}{\mathcal Z}
\newcommand{\cZ}{\mathcal Z}

\newcommand{\afr}{\mathfrak {a}}
\newcommand{\bfr}{\mathfrak {b}}
\newcommand{\cfr}{\mathfrak {c}}
\newcommand{\Cfr}{\mathfrak {C}}
\newcommand{\ffr}{\mathfrak {f}}
\newcommand{\chfr}{\hat{\mathfrak {c}}}
\newcommand{\fhfr}{\hat{\mathfrak {f}}}
\newcommand{\gfr}{\mathfrak {g}}
\newcommand{\so}{\mathfrak {so}}
\newcommand{\gli}{\mathfrak {gl}}
\newcommand{\nfr}{\mathfrak {n}}
\newcommand{\ort}{\mathfrak {o}}
\newcommand{\pro}{\mathfrak {p}}
\newcommand{\TT}{\mathfrak T}
\newcommand{\Zfr}{\mathfrak Z}

\newcommand{\Ipl}{\left\llangle} % inner product left bigger
\newcommand{\Ipr}{\right\rrangle} % inner product right bigger

\newcommand{\ipl}{\Ipl} % inner product left
\newcommand{\ipr}{\Ipr} % inner product right

\newcommand{\Rmn}{\R^{n+2}_1}

\newcommand{\lqb}{\lim_{q\rightarrow \sip}}
\newcommand{\lqbc}{\lim_{\comp\ni q\rightarrow \sip}}

\newcommand{\Ta}{\afr}
\newcommand{\Tb}{\bfr}
\newcommand{\bs}{\backslash}
\newcommand{\nei}{\cU}
\newcommand{\sip}{s}
\newcommand{\bsb}{\backslash\{\sip\}}
\newcommand{\cub}{\cup\{\sip\}}
\newcommand{\wb}{\cup\{b\}}
\newcommand{\bd}{\ell}
\newcommand{\tang}{\mathcal T}

\newcommand{\gatr}[2]{#1\!\ltimes \! #2} % gauge transform by map 
\newcommand{\Gtr}[3]{#1 \ltimes_{\!#2} #3}
\newcommand{\proj}{\mathfrak p}
\newcommand{\pb}[1]{\boldsymbol{#1}}

\newcommand{\comp}{\mathcal C}

%---- O J E C T S  A S S O C I A T E D  T O  S U R F A C E 
\newcommand{\Qend}{\mathfrak Q}   %----- ENDOMORPHISM RELATED TO POLARIZATION
\newcommand{\Qendc}{\mathfrak Q}   %----- ENDOMORPHISM RELATED TO POLARIZATION FOR CURVES
\newcommand{\Qho}{\mathcal H}   %----- HOPF DIFFERENTIAL
\newcommand{\Qua}{Q}   %----- HOLOMORPHIC QUADRATIC DIFFERENTIAL
\newcommand{\Pol}{Q}   %----- POLARIZATION
\newcommand{\of}{\go} % 1-form associated to surface or curve
\newcommand{\Of}{\gO} % o-valued lift of it
\newcommand{\curdir}{\eta}
\newcommand{\tandir}{\gs}
\newcommand{\tandira}{\mu}
\newcommand{\tandirb}{\nu}
\newcommand{\curdirc}{\eta}
%---- 1 F O R M S
\newcommand{\pf}{\psi} % symbol for pole form
\newcommand{\Pf}{\Psi} % symbol for lift of pole form
\newcommand{\ppf}{\xi} % pure pole form
\newcommand{\ppfr}{\ppf^\Rea}
\newcommand{\ppfi}{\ppf^\Ima}
\newcommand{\Ppfr}{\Ppf^\Rea}
\newcommand{\Ppfi}{\Ppf^\Ima}
\newcommand{\Ppf}{\Xi} % orthogonal lift of pure pole form
\newcommand{\aof}{\psi} % arbitrary one form
\newcommand{\Aof}{\Psi} % lift of arbitrary one form
%---- 1 F O R M S
\newcommand{\sing}{\mathcal S}

%---- C O N S T A N T    B A S I S    O F    R^(n+2)
\newcommand{\baszer}{o}
\newcommand{\basinf}{\boldsymbol{\iota}}
\newcommand{\bastan}{\mathfrak t}
\newcommand{\bastanu}{\mathfrak t_u}
\newcommand{\bastanv}{\mathfrak t_v}
\newcommand{\basnor}{\mathfrak n}
%-------------------------------------------------

\newcommand{\la}{\langle}
\newcommand{\ra}{\rangle}
\newcommand{\La}{\left\langle}
\newcommand{\Ra}{\right\rangle}
\newcommand{\LiSe}{\pounds}

\newcommand{\pr}[1]{{#1}^{\prime}}
\newcommand{\dpr}[1]{{#1}^{\prime\prime}}
\newcommand{\ti}[1]{\tilde{#1}}
\newcommand{\dti}[1]{\tilde{\tilde{#1}}}
\newcommand{\p}{\partial}
\newcommand{\fh}{{\hat f}}
\newcommand{\Fh}{{\hat F}}

\newcommand{\sh}{{\hat s}}
\newcommand{\Sh}{{\hat S}}

\newcommand{\hyfr}{{g}}

\newtheorem{prop}{Proposition}[section]
\newtheorem{theorem}[prop]{Theorem}
\newtheorem{lemma}[prop]{Lemma}
\newtheorem{corollary}[prop]{Corollary}
\newtheorem{defi}[prop]{Definition}
\newtheorem{defiandlemma}[prop]{Definition and Lemma}
\newtheorem{remark}[prop]{Remark}
\newtheorem{example}{Example}

\newcommand{\ga}{\alpha}
\newcommand{\gb}{\beta}
\newcommand{\gga}{\gamma}
\newcommand{\gd}{\delta}
\newcommand{\gD}{\Delta}
\newcommand{\gth}{\theta}
\newcommand{\vth}{\vartheta}
\newcommand{\vphi}{\varphi}
\newcommand{\gk}{\kappa}
\newcommand{\gl}{\lambda}
\newcommand{\gL}{\Lambda}
\newcommand{\go}{\omega}
\newcommand{\gO}{\Omega}
\newcommand{\gomi}{\omicron}
\newcommand{\gs}{\sigma}
\newcommand{\gS}{\Sigma}
\newcommand{\gep}{\epsilon}
\newcommand{\gG}{\Gamma}

\textbf{MSC 2010}: 35B40, 51B10, 58K10, 58J72

\begin{abstract}
We consider those simply connected isothermic surfaces for which their Hopf differential factorizes into a real function and a meromorphic quadratic differential that has a zero or pole at some point, but is nowhere zero and holomorphic otherwise. Upon restriction to a simply connected patch that does not contain the zero or pole, the Darboux and Calapso transformations yield new isothermic surfaces. We determine the limiting behaviour of these transformed patches as the zero or pole of the meromorphic quadratic differential is approached and investigate whether they are continuous around that point.
\end{abstract}

\section{Introduction}
In differential geometry transformations of surfaces have contributed considerably in revealing the structures of various surface classes and relations between them. Already in the nineteenth century geometers discovered methods to construct new surfaces from a given one while preserving some geometrical properties, such as the Combescure transformation, which preserves tangent planes up to translations, and the Ribaucour transformation, which preserves curvature lines and an enveloped sphere congruence, see \cite{bia22} and \cite{eis62}, or \cite{daj03} for a more modern treatment. While these transformations exist for arbitrary smooth surfaces in Euclidean space, others are defined only on subclasses of smooth surfaces, such as the B\"acklund and Lie transformations of pseudospherical surfaces (see \cite{eis60}) in Euclidean geometry or the Christoffel, Darboux and Calapso transformations of isothermic surfaces in M\"obius geometry (see \cite{bur06,jer03}) and Lie applicable surfaces in Lie sphere geometry (see \cite{pem16}). During the last three decades, many of these surfaces have been shown to constitute integrable systems by relating their transformations to a pencil of flat connections, as summarized in \cite{bur17}, cf. \cite{ter00,bur10a,bur06,bur93,fer96}. Thereby, their rich transformation theory is placed into common ground and the highly developed tools of integrable systems theory are made available. A discrete version of integrability serves as the guideline for how to define discrete analogues of certain smooth surface classes \cite{bob07}.

Transformations of surfaces can also be used to shed a different light on representation formulas. For example, the Weierstrass representation of a minimal surface can be interpreted as the Goursat transform of its Weierstrass data, as explained in \cite{jer03,hon17}, and Bryant's representation \cite{bry87} of CMC-1 surfaces in hyperbolic space can be related to the Darboux transformation of isothermic surfaces, see \cite{jer01}. Recently, these and further representation formulas have been linked to the transformation theory of $\gO$-surfaces in \cite{pem16}.

We are interested in the class of smooth isothermic surfaces, classically characterized by the local existence of conformal curvature line coordinates, away from umbilics. For these surfaces, the transformation theory is defined only locally and away from problematic umbilics. In other words, it is well defined only on the subclass that contains those simply connected isothermic surfaces which can be covered by conformal curvature line coordinate charts. These surfaces, which we call simple isothermic, can equivalently be characterized by the existence of a factorization of the Hopf differential of the surface into a real factor and a nowhere zero holomorphic quadratic differential (cf. \cite{bur06,jer03,smy04}). In particular, a simple isothermic surface cannot contain an umbilic of nonzero index and hence cannot have the topology of the sphere by the Poincar\'e-Hopf theorem (cf. \cite{hop89}).

Arguably, this limitation of the transformation theory is not quite satisfactory though as many classical examples of isothermic surfaces are not simple isothermic. For instance, a triaxial ellipsoid, surfaces of revolution which smoothly intersect the axis of revolution, and minimal surfaces with appropriate Weierstrass data all contain umbilics with nonzero index. However, all these examples fall into the class of meromorphically isothermic surfaces, that is, their Hopf differential factorizes into a real factor and a globally defined meromorphic quadratic differential. The globally existing factorization makes this class of surfaces a promising candidate for an extension of the simple isothermic transformation theory.

For a given meromorphically isothermic surface which is not simple isothermic, the transformation theory is still not defined globally. But after removing the zeros and poles of the meromorphic quadratic differential from the domain of the surface and then passing to the universal cover, one obtains a simple isothermic surface as a branched covering of the original surface. One may then ask \textit{which transforms of this simple isothermic covering surface descend to surfaces that qualify as transforms of the original meromorphically isothermic surface}. More precisely, for a meromorphically isothermic surface $\la f\ra$ with domain $M$, denote by $\sing\subset M$ the set of zeros and poles of the meromorphic quadratic differential $\Qua$ obtained from the factorization of the Hopf differential. Then the pullback $\la \pb f\ra$ of that surface to the universal cover $\pb M\bs\sing$ of $M\bs\sing$ is simple isothermic. The fundamental group of $M\bs\sing$ acts on the space of transforms of $\la \pb f\ra$ and one question is whether there are transforms which are invariant under this action. These would then qualify as transforms of the original surface $\la f\ra$ restricted to $M\bs\sing$. A second question is how the transforms of $\la \pb f\ra$ behave as one approaches the zeros and poles $\sing$ of $\Qua$. In particular, do the transforms have limits at $\sing$?

In this work, we address these questions for the Darboux and Calapso transforms of a simply connected meromorphically isothermic surface where the meromorphic quadratic differential $\Qua$ has a pole of first or second order at an interior point $\sip $, but is nowhere zero and holomorphic otherwise. According to the local Carath\'eodory conjecture \cite{gui08}, these are the only poles that can occur on an isothermic surface. We investigate for which parameters the Darboux and Calapso transforms of the pullback of that surface to the universal cover of $\gD\bsb$ are invariant under the action of the fundamental group and what their limiting behaviour at $\sip$ is. We also briefly treat the case where $\Qua$ has a zero and argue that it is conceptually much simpler than a pole of $\Qua$. Our results about the limits of transforms at a pole of $\Qua$ are generalizations of the corresponding results for transforms of polarized curves, found in \cite{fu18p}, to surfaces. The action of the fundamental group on the space of Darboux and Calapso transforms however is a novel feature, which is specific to surfaces.

In Sect \ref{section:stateofart}, we present a hierarchy of definitions of isothermic surfaces in the conformal $n$-dimensional sphere $S^n$, including meromorphically isothermic and simple isothermic surfaces. We then introduce our formulation of the Darboux and Calapso transformations of simple isothermic surfaces using the concept of primitives of the pencil  $(\dd+\gl\of)_{\gl\in\R}$ of flat connections associated to a simple isothermic surface. These primitives $\gG_p(\gl\of)$ are M\"ob$(S^n)$-valued maps characterized by
$$\gG_p(\gl\of)(\dd+\gl\of)(\gG_p(\gl\of))^{-1}=\dd,~~~\gG_p^{~p}(\gl\of)=\id,$$
for any point $p$ in the domain. A $\gl$-Darboux transform of $\la f\ra$ is obtained by acting with $(\gG_p(\gl\of))^{-1}$ on a point $\la \fh_p\ra\in S^n$ and a $\gl$-Calapso transform is the product of $\gG_p(\gl\of)$ with $\la f\ra$. 

These constructions of the Darboux and Calapso transformations cannot be extended to meromorphically isothermic surfaces. The family of flat connections $\dd+\gl\of$ exists only away from the poles of the meromorphic quadratic differential $\Qua$ and, in general, they have nonzero monodromy around poles. Therefore, their primitives do not exist globally, not even away from the poles of $\Qua$. But the primitives along closed loops generate the monodromy group. In Prop \ref{prop:monodromy_dar_cal_trafo}, we use this to derive actions of the fundamental group of a meromorphically isothermic surface with removed poles on the spaces of Darboux and Calapso transforms of its simple isothermic covering. We conclude this section with some results about the case of a meromorphically quadratic differential with a zero.

In Sect \ref{section:limits_of_prims_surf}, we develop tools to analyse the limiting behaviour of the primitives and the monodromy of the flat connections associated to a meromorphically isothermic surface. In particular, we introduce a class of singular flat connections which have a pole at some point in the domain. We then analyse the limiting behaviour of primitives of such connections at the pole and investigate the structure of their monodromy. 

In Sect \ref{section:surf_pole_fo}, we consider the case of a simply connected meromorphically isothermic $\la f\ra$ with a meromorphic quadratic differential that has pole of first order at a point $\sip$, but is nowhere zero and holomorphic otherwise. Using the results of Sect \ref{section:limits_of_prims_surf}, we show that all Darboux and Calapso transforms of the simple isothermic covering of $\la f\ra$ have a limit at $\sip$. Moreover, we show that no Calapso transform is continuous around the pole and for every value of the spectral parameter $\gl$ there is precisely one Darboux transform that is continuous around and at the pole

The case of a pole of second order is treated in Sect \ref{section:surf_pole_so}. We can again apply the results of Sect \ref{section:limits_of_prims_surf} to show that in this case a generic Darboux transform still has a limit at the pole, but there are Darboux transforms which do not converge as one approaches the pole. The Calapso transforms either have a limit point or a limit sphere, depending on the value of the spectral parameter $\gl$. Moreover, for positive $\gl$ we show that no Calapso transform and almost no Darboux transform is continuous around the pole.

%Moreover, for positive $\gl$ we compute the action of the fundamental group. It turns out that no $\gl$-Calapso transform is invariant and also a generic Darboux transform is not invariant. 
%However, when $j\sqrt{1-2\gl}$ is an integer for some $j\in\mathbb N$, the pushforward of any $\gl$-Calapso transform and any $\gl$-Darboux transform to the $j$-fold cover of $\gD\bsb$ is well-defined.

\textit{Acknowledgements:} The author would like to thank C. Bohle for his comments on the monodromy of the family of flat connections. Special thanks go to U. Hertrich-Jeromin for his numerous discussions, valuable feedback and his support as supervisor of the author's doctoral thesis \cite{fu18}, on which this work is based. It has been supported by the Austrian Science Fund (FWF) and the Japan Society for the Promotion of
Science (JSPS) through the FWF/JSPS Joint Project grant I1671-N26 ``Transformations and Singularities''.

\section{Darboux and Calapso transforms of isothermic surfaces}\label{section:stateofart}
The characterization of isothermic surfaces is invariant under conformal transformations. We thus study them in M\"obius geometry, the geometry of the group of conformal transformations of the $n$-dimensional sphere $S^n$ equipped with its standard conformal structure\footnote{More generally, one may consider isothermic submanifolds of symmetric $R$-spaces, as explained in \cite{bur11}}. We use the projective model of M\"obius geometry to identify $S^n$ with the projectivization $\Pro(\Li^{n+1})$ of the light cone $\Li^{n+1}$ in $(n+2)$-dimensional Minkowski space $\Rmn$ (cf. \cite[Ch 1]{jer03} or \cite[Ch 1]{bur06}). We always assume a surface $\la f\ra$ in $S^n$ to be orientable and immersed, such that it can be realized as a conformal immersion of a Riemann surface $M$ into $\Pro(\Li^{n+1})$. The symbol $\la f\ra$ indicates that it can equivalently be described by pointwise taking the linear span $\la\cdot\ra$ of an immersion $f:M\rightarrow\Li^{n+1}\subset\Rmn$, called a lift of $\la f\ra$. When $N$ is a spacelike unit normal field of any lift $f$ of $\la f\ra$, then, for any function $a$, we say that $N+a f$ is an equivalent normal field. The equivalence class $N+\la f\ra$ is then a normal field of $\la f\ra$ and we call any of its representatives a \textit{lift} of $N+\la f\ra$. The normal fields of $\la f\ra$ are sections of the normal bundle $\mathcal N_{\la f\ra}$ of $\la f\ra$.

The pullback of the Minkowski inner product of $\Rmn$ via a lift $f$ induces a Riemannian metric on $M$. Different lifts of $\la f\ra$ induce conformally equivalent metrics. In this way, a surface in $S^n$ equips its domain manifold $M$ with a conformal structure, which we always assume to agree with that induced by the complex structure of $M$. 

Classically, a surface in $S^n$ is isothermic if conformal curvature line coordinates exist locally around every non-umbilic point. However, the transformation theory of isothermic surfaces is defined only on the subclass of simple isothermic surfaces. To define the latter, we need the notion of the \textit{Hopf differential} $\Qho$ of a surface in $S^n$ given by
\begin{align}
\Qho^f_{N+\la f\ra}(v,w):=-\ipl \p f(v),\p N(w)\ipr \label{eq:hopf_diff}
\end{align}
for all lifts $f$ of $\la f\ra$, all lifts $N$ of all normal fields $N+\la f\ra$ and all complex sections $v,w$ of $ TM_\Com$. Here, we have extended the Minkowski inner product $\ipl\cdot,\cdot\ipr$ bilinearly to $\Com^{n+2}$ and denote by $\p$ the Dolbeault operator of the Riemann surface $M$, such that $\|\p f\|^2=0$. 

While the classical definition of isothermicity is purely local, the following definitions link the geometry of a surface to a globally existing meromorphic quadratic differential on $M$.
\begin{defi}\label{defi:iso_surf}
A {\upshape polarized surface in }$S^n$ is a pair $(\la f\ra,\Qua)$ of a surface $\la f\ra:M\rightarrow S^n$ and a meromorphic quadratic differential $\Qua\in (TM^*_\Com)^2$ on $M$. A polarized surface $(\la f\ra,\Qua)$ in $S^n$ is
\begin{enumerate}
	\item {\upshape meromorphically isothermic} if $M$ is connected and the Hopf differential $\Qho$ of $\la f\ra$ factorizes into $\Qua$ and a (real) bilinear map $\gk$, that is,
	\begin{equation}
	\Qho^f_{N+\la f\ra}(v,w)=\gk(f,N+\la f\ra) ~\Qua(v,w);
	\label{eq:factorization_Hopf_diff}
	\end{equation} 
	\item {\upshape holomorphically isothermic} if it is meromorphically isothermic and additionally $\Qua$ is holomorphic;
	\item {\upshape simple isothermic} if it is holomorphically isothermic, $M$ is simply connected and $\Qua$ is nowhere zero.
\end{enumerate}
\end{defi}
As proved in \cite{fu18}, we have the proper inclusions
\begin{equation}\label{eq:inclusions_isothermic}
\begin{split}
&\{simple~isothermic~surfaces\}\subset \{holomorphically~isothermic~ surfaces\}\\
\subset \{& meromorphically~isothermic~ surfaces\}\subset\{classically~isothermic~ surfaces\}.
\end{split}
\end{equation}

For a meromorphically isothermic surface $(\la f\ra,\Qua)$ with domain $M$, if the restriction of $\Qua$ to a curve $c$ in $M$ is real, then also the restriction of the Hopf differential $\Qho$ to $c$ is real, such that $c$ is a curvature line of $\la f\ra$ (cf. \cite[Ch VI, Sect 1.2]{hop89}). Therefore, if $\la f\ra$ is not totally umbilic, the differential $\Qua$ for which $(\la f\ra,\Qua)$ is meromorphically isothermic is determined up to nonzero, constant rescalings. If on the other hand $\la f\ra$ is totally umbilic, then $(\la f\ra,\Qua)$ is meromorphically isothermic for any meromorphic quadratic differential $\Qua$ on $M$.

The central object for the transformation theory of a simple isothermic surface is its associated 1-form with values in the Lie algebra of the M\"obius group. In the projective model of M\"obius geometry, the action of the M\"obius group on $S^n$ is identified with the action of the projective Lorentz group $\Pro O(\Rmn)$ on $\Pro(\Li^{n+1})$. We view $\Pro O(\Rmn)$ as a subgroup of $\Pro GL(\Rmn)$, which in turn is a submanifold of $\Pro \End(\Rmn)$. Taking the linear span $\la \cdot\ra$ of elements of $\End(\Rmn)$ then provides a diffeomorphism from the group $O^+(\Rmn)\subset GL(\Rmn)\subset \End(\Rmn)$ of orthochronous Lorentz transformations to $\Pro O(\Rmn)$. Similarly, we view the Lie algebra $\pro\ort(\Rmn)$ as a subspace of the tangent space $\End(\Rmn)/\R\,\id$ of $\Pro \End(\Rmn)$ at the identity. The differential of $\la\cdot\ra$ at the identity then restricts to an isomorphism of Lie algebras
\begin{equation}\label{eq:iso_liealg}
\begin{split}
\dd_{\id} \la\cdot\ra:~\End(\Rmn)\supset \ort(\Rmn)&\rightarrow \pro\ort(\Rmn)\subset \End(\Rmn)/\R\,\id,\\
v\wedge w&\mapsto v\wedge w+\R\,\id,
\end{split}
\end{equation}
where we further identify $\gL^2(\Rmn)$ with $\ort(\Rmn)$ via
$$(v\wedge w)(x)=\ipl v,x\ipr w-\ipl w,x\ipr v$$
for $v,w,x\in\Rmn$. We denote the image of an element $\Aof\in\ort(\Rmn)$ under $\dd_{\id}\la \cdot\ra$ in $\pro\ort(\Rmn)$ by $\Aof+\R\,\id$ and call $\Aof$ the \textit{orthogonal lift} of $\Aof+\R\,\id$.

\begin{defi}\label{defi:eta}
	Let $f$ be a lift of a polarized surface $(\la f\ra,\Qua)$ with domain $M$. Away from the poles of $\Qua$, we define the $\pro\ort(\Rmn)$-valued {\upshape 1-form $\of$ associated to} $(\la f\ra,\Qua)$ by
	\begin{equation}
	\of=f\wedge \dd f\circ \Qend+\R\,\id.
	\label{eq:defieta}
	\end{equation}
	Here, $\Qend$ is the unique symmetric, trace-free endomorphism of $T M$ that satisfies
	\begin{equation}
	\frac 12 \Qua=\ipl \dd f\circ \Qend,\dd f\ipr^{(2,0)}\in TM^{(2,0)}\subset TM_\Com.
	\label{eq:rel_Qend_Q}
	\end{equation}
	By $\Of$ we denote the orthogonal lift of $\of$.
\end{defi}
Clearly, $\of$ is independent of the chosen lift and takes values in $\la f\ra\wedge \la f\ra^\perp+\R\,\id$, but depends on the choice of holomorphic quadratic differential $\Qua$. From the requirement that $\Qend$ be symmetric and trace-free, it follows that $\Qend$, linearly extended to $TM_\Com$, maps sections of $TM^{(1,0)}$ to sections of $TM^{(0,1)}$ and vice versa. Using \eqref{eq:rel_Qend_Q} we then find that it satisfies
\begin{equation}
 \bar\p f\circ\Qend=\bar \p f\, \frac{\Qua}{2\ipl \bar\p f,\p f\ipr}.
\label{eq:component_endomorphism}
\end{equation}
Here, the right hand side is the $\Rmn$-valued 1-form locally given by $f_{\bar z}\frac{\Qua_z}{2\ipl f_{\bar z},f_z\ipr}\dd z$ with $\Qua=\Qua_z\dd z^2$ in terms of an arbitrary holomorphic coordinate $z$.

The transformation theory of simple isothermic surfaces relies on the closedness of their associated 1-forms $\of$.
\begin{lemma}\label{lemma:closedness_of_oneform}
	Let $(\la f\ra,\Qua)$ be a polarized surface in $S^n$ with connected domain $M$ and holomorphic $\Qua$. Then $(\la f\ra,\Qua)$ is holomorphically isothermic if and only if its associated 1-form $\of$ is closed, which in turn is equivalent to the flatness of $\dd+\gl\of$ for all $\gl\in\R$.
\end{lemma}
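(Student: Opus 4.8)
The plan is to settle the second equivalence by a curvature computation and then reduce the first to a single local identity extracted from the structure equations of a conformal lift in the light cone. Fix a lift $f$ of $\la f\ra$ and a local holomorphic coordinate $z$, and set $\gl:=\ipl f_z,f_{\bar z}\ipr$. By \eqref{eq:component_endomorphism} the orthogonal lift of $\of$ is $\Of=f\wedge(\dd f\circ\Qend)=(f\wedge f_{\bar z})\tfrac{\Qua_z}{2\gl}\,\dd z+(f\wedge f_z)\tfrac{\overline{\Qua_z}}{2\gl}\,\dd\bar z$. Since $\ipl f,f\ipr=0$ gives $\ipl f,f_z\ipr=\ipl f,f_{\bar z}\ipr=0$ by differentiation, every term of the bracket $[f\wedge f_{\bar z},f\wedge f_z]$ vanishes (three of them because of these inner products, the fourth because $f\wedge f=0$), so $\Of\wedge\Of=0$. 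Hence the curvature of $\dd+\gl\of$ equals $\gl\,\dd\of+\gl^2\,\of\wedge\of=\gl\,\dd\of$, and $\dd\of=0$ is equivalent to flatness of $\dd+\gl\of$ for all $\gl\in\R$ (in fact to flatness for a single $\gl\neq 0$). Moreover, the orthogonal lift identifies $\pro\ort(\Rmn)$ with $\ort(\Rmn)$ as Lie algebras by \eqref{eq:iso_liealg}, so $\dd\of=0$ if and only if $\dd\Of=0$.

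It remains to prove that $\dd\Of=0$ is equivalent to the factorization \eqref{eq:factorization_Hopf_diff}. Write $\Of=\gth+\overline{\gth}$ with $\gth:=(f\wedge f_{\bar z})\tfrac{\Qua_z}{2\gl}\,\dd z$, so $\dd\Of=\dd\gth+\overline{\dd\gth}$. Differentiating the relations $\ipl f,f\ipr=0$ and $\ipl f_z,f_z\ipr=0$ repeatedly produces the local structure equations of the conformal lift: relative to a frame $f,\hat f,f_z,f_{\bar z},N_1,\dots,N_{n-2}$, where $\hat f$ is null with $\ipl f,\hat f\ipr\neq0$ and $N_1,\dots,N_{n-2}$ is a local orthonormal normal frame, the vector $f_{\bar z\bar z}$ has vanishing $\hat f$- and $f_z$-components, its $f_{\bar z}$-component is $\p_{\bar z}\log\gl$, and its $N_i$-component equals $\overline{c_i}$, where $c_i:=\ipl f_{zz},N_i\ipr=\Qho^f_{N_i+\la f\ra}(\p_z,\p_z)$. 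Therefore $f\wedge f_{\bar z\bar z}=(\p_{\bar z}\log\gl)\,(f\wedge f_{\bar z})+\sum_i\overline{c_i}\,(f\wedge N_i)$.

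Combining this with $\p_{\bar z}(f\wedge f_{\bar z})=f\wedge f_{\bar z\bar z}$ and $\p_{\bar z}\big(\tfrac{\Qua_z}{2\gl}\big)=-\tfrac{\Qua_z}{2\gl}\,\p_{\bar z}\log\gl$ — valid because $\p_{\bar z}\Qua_z=0$ by holomorphicity of $\Qua$, which is exactly where the hypothesis enters — the tangential $(f\wedge f_{\bar z})$-terms cancel, and one obtains
\[
\dd\gth=-\tfrac{\Qua_z}{2\gl}\sum_i\overline{c_i}\,(f\wedge N_i)\,\dd z\wedge\dd\bar z,\qquad \dd\Of=\tfrac{1}{2\gl}\sum_i\big(\overline{\Qua_z}\,c_i-\Qua_z\,\overline{c_i}\big)(f\wedge N_i)\,\dd z\wedge\dd\bar z.
\]
As the bivectors $f\wedge N_i$ are linearly independent, $\dd\Of=0$ if and only if $\overline{\Qua_z}\,c_i=\Qua_z\,\overline{c_i}$ for every $i$, i.e.\ the ratio $\Qho^f_{N_i+\la f\ra}(\p_z,\p_z)/\Qua_z$ is real wherever $\Qua_z\neq0$. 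Since the Hopf differential is linear in the normal field and unchanged by adding multiples of $f$ to $N_i$, this holds for the frame $N_i$ precisely when $\Qho$ is a real-valued function times $\Qua$, i.e.\ when \eqref{eq:factorization_Hopf_diff} holds. The main work of the proof is thus bookkeeping the structure equations; the only genuine subtlety is that the above pointwise identity must still furnish a globally defined factorizing $\gk$ across the isolated zeros of $\Qua$, and this follows by a comparison of leading Taylor coefficients, which shows that $\overline{\Qua_z}\,c_i=\Qua_z\,\overline{c_i}$ forces each $c_i$, hence $\Qho$, to vanish at a zero of $\Qua$ to at least its order.
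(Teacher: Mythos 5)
Your proof is correct and takes essentially the same route as the paper: the flatness equivalence via the vanishing of the bracket of $\Of$ with itself, and the reduction of $\dd\of=0$ to the reality of $\ipl f_{\bar z\bar z},N_i\ipr\,\Qua_z$ through the structure equations in a local holomorphic coordinate, which is precisely the paper's ``small computation'' written out in full. The only differences are cosmetic — your conformal factor is unluckily also called $\gl$, clashing with the spectral parameter, and your closing remark on the behaviour at the zeros of $\Qua$ is a correct refinement that the paper leaves implicit.
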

\begin{proof}
Choose a local holomorphic coordinate $z$ and write $\of^{(1,0)}=\of_z\dd z$. Then
$$\dd \of=0~~\Leftrightarrow~~\bar\p\of^{(1,0)}+\p\of^{(0,1)}=0~~\Leftrightarrow~~\Rea(\bar\p\of^{(1,0)})=0~~\Leftrightarrow~~\Ima\left(\frac{\p \of_z}{\p \bar z}\right)=0.$$
A small computation using \eqref{eq:component_endomorphism} and holomorphicity of $\Qua=\Qua_z\dd z^2$ shows that
			$$\Ima\left(\frac{\p \of_z}{\p \bar z}\right)=f\wedge \sum_{i=1}^{n-2}N_i\frac{\Ima \left(\ipl f_{\bar z\bar z},N_i\ipr  \Qua_z\right)}{2\ipl f_z,f_{\bar z}\ipr },$$
	where $N_1,...,N_{n-2}$ are lifts of orthonormal normal fields. Using the definition \eqref{eq:hopf_diff} of the Hopf differential $\Qho$ of $\la f\ra$, we conclude that $\of$ is closed if and only if $\Qho$ is a real multiple of $\Qua$ which by definition is equivalent to $(\la f\ra,\Qua)$ being holomorphically isothermic.
	
	Since $[f\wedge \dd f\circ\Qend,f\wedge \dd f \circ\Qend]=0$, closedness of $\of$ is equivalent to flatness of $\dd+\gl\of$ for all $\gl\in\R$.
\end{proof}

Let $G$ be a Lie group with Lie algebra $\gfr$ and $\aof$ a $\gfr$-valued 1-form on a simply connected manifold $\gD$ such that $\dd+\aof$ is flat. For any $p\in \gD$, we denote by 
$$\gG_{p}(\aof):~\gD\rightarrow G,~~q\mapsto \gG_{p}^{~q}(\aof),$$
the unique primitive that satisfies (cf. \cite[Ch 3, Thm 6.1]{sha97})  
\begin{equation}
\dd\gG_{p}(\pf)=\gG_{p}(\pf)\pf,~~~~\gG_{p}^{~p}(\pf)=\id.
\label{eq:propsgG1}
\end{equation}
From the defining properties \eqref{eq:propsgG1} of $\gG_{p}(\aof)$ and its uniqueness it follows readily that
\begin{equation}
\forall p,q\in \gD:~~~~~\gG_{p}^{~q}(\aof)\gG_{q}(\aof)=\gG_{p}(\aof).
\label{eq:propsgG2}
\end{equation}
Therefore, the map $\gG^{~p}(\aof):\gD\ni q\mapsto \gG_q^{~p}(\aof)\in G$ is the composition of $\gG_{p}(\aof)$ with taking the inverse in $G$ and thus satisfies
$$\dd \gG^{~p}(\aof)=-\pf \gG^{~p}(\aof),~~~~\gG_p^{~p}(\aof)=\id.$$
Under a gauge transformation 
$$\aof\mapsto \gatr g \aof:=g^{-1}\aof g+g^{-1}\dd g$$
using a smooth map $g:\gD\rightarrow G$, the primitives $\gG_{p}(\aof)$ transform as
\begin{equation}
\gG_{p}(\aof)=g(p)\,\gG_{p}(\gatr g \aof)\,g^{-1}.
\label{eq:gauge_trafo_beha}
\end{equation}
We also remark that for the orthogonal lift $\Aof$ of a $\pro\ort(\Rmn)$-valued 1-form $\aof$, we have
\begin{equation}
\dd \la \gG_p(\Aof)\ra=\la \gG_p(\Aof)\ra \dd_{\id} \la\cdot\ra(\Aof)=\la \gG_p(\Aof)\ra \aof~~~\Rightarrow~~~\la \gG_p(\Aof)\ra=\gG_p(\aof).
\label{eq:primsofortholift}
\end{equation}

For the 1-form $\of$ associated to a simple isothermic surface, $\dd+\gl\of$ is flat for all $\gl\in\R$. We may thus define
\begin{defi}\label{defi:trafos_surf}
	Let $(\la f\ra,\Qua)$ be simple isothermic with domain $\gD$ and associated 1-form $\of$.
	
	For spectral parameter $\gl\in\R$ and $p\in \gD$, the pair $(\la f_{\gl,p}\ra,\Qua)$ with
	$$\la f_{\gl,p}\ra:=\gG_{p}(\gl\of)\la f\ra$$
	is called the $\gl$-{\upshape Calapso transform of $(\la f\ra,\Qua)$ normalized at} $p$.
	
	For $\gl\in\R^\times$, a point $p\in\gD$ and $\la \fh_p\ra\in S^n$ not lying on the image of $\la f_{\gl,p}\ra$, the pair $(\la \fh\ra,\Qua)$ with
	$$\la \fh\ra:=\gG^{~p}(\gl\of)\la \fh_p\ra$$
	is called the $\gl$-{\upshape Darboux transform} of $(\la f\ra,\Qua)$ with initial point $\la \fh(p)\ra=\la \fh_p\ra$.
\end{defi}
Using \eqref{eq:primsofortholift} and \eqref{eq:propsgG1}, one readily finds that our definitions agree with \cite[Def 1.12]{bur10}, up to M\"obius transformation,  and \cite[Def 1.7]{bur10} (cf. \cite[\S 8.6.13]{jer03} and \cite[\S 8.7.1]{jer03}). In particular, our condition that $\la \fh(p)\ra$ not lie on the image of $\la f_{\gl,p}\ra$ is equivalent to condition (1) of \cite[Def 1.7]{bur10}.

Replacing $\Qua$ by $\ti\gl\Qua$ with $\ti\gl\in\R^\times$ has the same effect as replacing $\gl$ by $\gl\ti\gl$. Thus, the sets of all Darboux and Calapso transforms of $(\la f\ra,\Qua)$ and those of $(\la f\ra,\ti\gl\Qua)$ are the same. We will use this in Sect \ref{section:surf_pole_so} and scale $\Qua$ conveniently.

The Darboux and Calapso transformations are well defined on all simple isothermic surfaces. In particular, any Darboux or Calapso transform of a simple isothermic surface is again simple isothermic\footnote{Although of geometric importance, we do not prove this fact here. The proof can be found in \cite{fu18}, see also \cite[\S 8.6.17, \S 8.7.3]{jer03}, \cite[Sect 1.3]{bur10}.}. There are several obstacles that one faces when trying to extend Def \ref{defi:trafos_surf} to meromorphically isothermic surfaces. First, at a pole of the meromorphic quadratic differential $\Qua$, the 1-form $\of$ associated to $(\la f\ra,\Qua)$ is not defined, but also has a pole there. Second, although at a zero of $\Qua$ the primitives of all $\gl\of$ are well defined and therefore also the maps $\gG^{~p}(\gl\of)\la\fh_p\ra$ of Def \ref{defi:trafos_surf}, these maps fail to immerse at that point and hence are not meromorphically isothermic again. A third obstacle arises when the domain $M$ is not simply connected. In that case, the 1-forms $\gl\of$ may have non-trivial monodromy and the primitives of $\gl\of$ are not globally defined on $M$. 

However, for any meromorphically isothermic surface $(\la f\ra,\Qua)$ with domain $M$, we can first remove the set $\sing$ of zeros and poles of $\Qua$ from $M$ and then pull back the restriction of $(\la f\ra,\Qua)$ to $M\bs\sing$ to the universal cover of $M\bs\sing$. This pullback $(\la \pb f\ra,\pb\Qua)$ is then a simple isothermic surface, such that all its Darboux and Calapso transforms in the sense of Def \ref{defi:trafos_surf} exist. One can then investigate whether these transforms can be pushed forward to $M\bs\sing$ and what their limiting behaviour at the points of $\sing$ is.

In this work, we partly answer these questions for the specific cases of a meromorphically isothermic surface $(\la f\ra,\Qua)$ on a simply connected domain $\gD$ where $\Qua$ has a pole of first or second order at an interior point $\sip $ of $\gD$, but is holomorphic and nowhere zero otherwise. 

We denote the universal cover of $\gD\bsb$ by\footnote{At this point, $\pb \gD\bsb$ should be understood as one symbol. Later, we will define the branched universal cover $\pb \gD:=(\pb \gD\bsb)\cub$, such that the symbol $\pb \gD\bsb$ for the universal cover may also be understood as $\pb \gD$ without $\sip$.} $\pb\gD\bsb$ and the covering map by $\proj:\pb\gD\bsb\rightarrow \gD\bsb$. Since $\proj$ is a local diffeomorphism, we can pullback any tensor $T$ defined on $\gD\bsb$ to $\pb\gD\bsb$ via $\proj$. We always use the corresponding bold symbol $\pb T$ for this pullback.

Whether or not the pushforward of a $\gl$-Darboux or $\gl$-Calapso transform of $(\la\pb f\ra,\pb\Qua)$ to the $j$-fold cover of $\gD\bsb$ exists depends on the monodromy group of $\gl\of$. In our case, the fundamental group of $\gD\bsb$ is isomorphic $\mathbb Z$ and so the monodromy group of $\gl\of$ with base point $P\in\gD\bsb$ is generated by the single element
\begin{equation}
\mon_P(\gl\of)=\gG_{0}^{~2\pi}(\gga^*\gl\of),
\end{equation}
where $\gga:[0,2\pi]\rightarrow \gD\bsb$ is a loop with base point $P$ and winding number $-1$ around $\sip$. We call $\mon_P(\gl\of)$ the \textit{monodromy of $\gl\of$ with base point} $P$.
\begin{prop}\label{prop:monodromy_dar_cal_trafo}
Let $(\la f\ra,\Qua)$ be a holomorphically isothermic surface with domain $\gD\bsb$, where $\gD$ is simply connected and $s$ is an interior point of $\gD$. Suppose further that $\Qua$ is nowhere zero on $\gD\bsb$. For $\gl\in\R^\times$ and $p\in\pb\gD\bsb$, let $(\la f_{\gl,p}\ra,\pb\Qua)$ be the $\gl$-Calapso transform of $(\la \pb f\ra,\pb\Qua)$ normalized at $p$ and let $(\la \fh\ra,\pb\Qua)$ be a $\gl$-Darboux transform of $(\la \pb f\ra,\pb\Qua)$. For $j\in\mathbb N$, 
\begin{enumerate}
	\item the pushforward of $\la \fh\ra$ to the $j$-fold cover of $\gD\bsb$ exists if and only if $\la \fh(p)\ra$ is invariant under $\big(\mon_{\proj(p)}(\gl\of)\big)^j$, and
	\item the pushforward of $\la f_{\gl,p}\ra$ to the $j$-fold cover of $\gD\bsb$ exists if and only if every point in the image of $\la f_{\gl,p}\ra$ is invariant under $\big(\mon_{\proj(p)}(\gl\of)\big)^j$. 
\end{enumerate}

\end{prop}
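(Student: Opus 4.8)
The plan is to turn ``the pushforward to the $j$-fold cover of $\gD\bsb$ exists'' --- which I take to mean that the relevant map, originally defined on $\pb\gD\bsb$, descends to a well-defined map on that cover --- into an invariance condition, and to read this condition off from the behaviour of the primitives under deck transformations. Since $\pi_1(\gD\bsb)\cong\Z$ with $[\gga]$ a generator, let $\tau:\pb\gD\bsb\to\pb\gD\bsb$ be the deck transformation of $\proj$ associated to $[\gga]$. Then $\tau$ generates the deck group of $\proj$, and $\tau^j$ generates the deck group of $\proj$ over the $j$-fold cover (the subgroup $j\Z$ being automatically normal in $\Z$), so a map on $\pb\gD\bsb$ descends to the $j$-fold cover if and only if it is invariant under $\tau^j$. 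Moreover, $\pb\of=\proj^*\of$ and $\proj\circ\tau=\proj$ give $\tau^*(\gl\pb\of)=\gl\pb\of$, where $\gl\pb\of=\proj^*(\gl\of)$, and likewise $\la\pb f\ra\circ\tau=\la\pb f\ra$.

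\emph{Equivariance of primitives.} First I would observe that if $\phi$ is any diffeomorphism of $\pb\gD\bsb$ with $\phi^*(\gl\pb\of)=\gl\pb\of$, then $\gG_{\phi(p)}(\gl\pb\of)\circ\phi$ solves $\dd X=X(\gl\pb\of)$ with $X(p)=\id$, so by uniqueness of primitives (cf.~\eqref{eq:propsgG1}) it equals $\gG_p(\gl\pb\of)$; in particular $\gG_{\tau(p)}^{~\tau(q)}(\gl\pb\of)=\gG_p^{~q}(\gl\pb\of)$. Combining this with the cocycle identity \eqref{eq:propsgG2} and iterating gives, for $j\in\Nat$,
$$\gG_p^{~\tau^j(q)}(\gl\pb\of)=\big(\gG_p^{~\tau(p)}(\gl\pb\of)\big)^{j}\,\gG_p^{~q}(\gl\pb\of).$$
Next, lifting $\gga$ to the path in $\pb\gD\bsb$ starting at $p$, whose endpoint is $\tau(p)$ by construction of $\tau$, and using $\proj^*\of=\pb\of$, one sees that $\gG_p^{~\tau(p)}(\gl\pb\of)$ is produced by the same ODE on $[0,2\pi]$ as $\gG_0^{~2\pi}(\gga^*\gl\of)=\mon_{\proj(p)}(\gl\of)$. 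Writing $\mon:=\mon_{\proj(p)}(\gl\of)$, this yields $\gG_p^{~\tau^j(q)}(\gl\pb\of)=\mon^{j}\,\gG_p^{~q}(\gl\pb\of)$ and, on inverting, $\gG_{\tau^j(q)}^{~p}(\gl\pb\of)=\gG_q^{~p}(\gl\pb\of)\,\mon^{-j}$.

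\emph{Conclusion.} For the Calapso transform, using $\la\pb f\ra\circ\tau^j=\la\pb f\ra$,
$$\la f_{\gl,p}(\tau^j(q))\ra=\gG_p^{~\tau^j(q)}(\gl\pb\of)\,\la\pb f(\tau^j(q))\ra=\mon^{j}\,\gG_p^{~q}(\gl\pb\of)\,\la\pb f(q)\ra=\mon^{j}\,\la f_{\gl,p}(q)\ra,$$
so $\la f_{\gl,p}\ra$ is $\tau^j$-invariant, i.e.\ descends, exactly when $\mon^{j}$ fixes $\la f_{\gl,p}(q)\ra$ for every $q$, that is, fixes every point of the image of $\la f_{\gl,p}\ra$. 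For the Darboux transform, $\la\fh(\tau^j(q))\ra=\gG_{\tau^j(q)}^{~p}(\gl\pb\of)\,\la\fh_p\ra=\gG_q^{~p}(\gl\pb\of)\,\mon^{-j}\,\la\fh_p\ra$; comparing this with $\la\fh(q)\ra=\gG_q^{~p}(\gl\pb\of)\,\la\fh_p\ra$ and evaluating at $q=p$, where $\gG_p^{~p}(\gl\pb\of)=\id$, forces $\mon^{-j}\,\la\fh_p\ra=\la\fh_p\ra$, while conversely this single condition clearly suffices; since $\mon$ is invertible it is equivalent to $\mon^{j}$ fixing $\la\fh_p\ra=\la\fh(p)\ra$. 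This reproduces the asserted dichotomy: one condition on the initial point for Darboux, a condition at every point of the image for Calapso.

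The step I expect to require the most care is the identification $\gG_p^{~\tau(p)}(\gl\pb\of)=\mon_{\proj(p)}(\gl\of)$: one has to keep track of base points and of the convention linking the homotopy class $[\gga]$ to its deck transformation $\tau$, and verify that the ODE for the primitive along the lift of $\gga$ is literally the one defining $\gG_0^{~2\pi}(\gga^*\gl\of)$. Everything else is formal bookkeeping with \eqref{eq:propsgG1}, \eqref{eq:propsgG2}, and the $\tau$-invariance of $\gl\pb\of$ and of $\la\pb f\ra$.
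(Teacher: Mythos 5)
Your proof is correct and follows essentially the same route as the paper: both reduce ``descends to the $j$-fold cover'' to invariance under the monodromy by identifying the primitive along a fiber-connecting path (equivalently, between a point and its image under a deck transformation) with a power of $\mon_{\proj(p)}(\gl\of)$. The only cosmetic differences are that you handle general $j$ directly via deck-transformation equivariance of the primitives, whereas the paper reduces to $j=1$ and uses the base-point conjugation relation \eqref{eq:behav_monodromy} to pass from invariance at every point to invariance at one.
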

\begin{proof}
The cases $j\neq 1$ follow from the case $j=1$ if we replace $(\la f\ra,\Qua)$ by its pullback to the $j$-fold cover of $\gD\bsb$. Therefore, it is sufficient to prove the proposition for $j=1$. 

\begin{enumerate}
\item The pushforward of $\la \fh\ra$ to $\gD\bsb$ exists if and only if
\begin{equation}
\forall q,\ti q\in\pb \gD\bsb~~\text{with}~~\proj(q)=\proj(\ti q):~~~~~\la \fh(q)\ra=\la \fh(\ti q)\ra.
\label{eq:monodromy_cond_1}
\end{equation}
From Def \ref{defi:trafos_surf} and \eqref{eq:propsgG2}, we know that
$$\forall q,\ti q\in\pb \gD\bsb:~~~~~\la \fh(q)\ra=\gG_q^{~\ti q}(\gl\proj^*\of)\la \fh(\ti q)\ra.$$
If $q,\ti q$ are such that $\proj(q)=\proj(\ti q)$, then $\gG_q^{~\ti q}(\gl\proj^*\of)$ is an element of the monodromy group of $\gl\of$ with base point $\proj(q)$. Thus, \eqref{eq:monodromy_cond_1} holds if and only if $\la \fh(q)\ra$ is invariant under the monodromy group of $\gl\of$ with base point $\proj(q)$ for all $q\in\pb\gD\bsb$. But that is equivalent to the invariance of $\la \fh(p)\ra$ under the monodromy $\mon_{\proj(p)}(\gl\of)$ for one $p\in\pb\gD\bsb$ because the monodromy group $G_{\proj(q)}(\gl\of)$ with base point $\proj(q)$ is related to that with base point $\proj(p)$ by
\begin{equation}
\gG_p^{~q}(\gl\pb\of)G_{\proj(q)}(\gl\of)\gG_{q}^{~p}(\gl\pb\of)=G_{\proj(p)}(\gl\of).
\label{eq:behav_monodromy}
\end{equation}

\item Similarly, the pushforward of $\la f_{\gl,p}\ra$ to $\gD\bsb$ exists if and only if
\begin{equation}
\forall q,\ti q\in\pb \gD\bsb~~\text{with}~~\proj(q)=\proj(\ti q):~~~~~\la f_{\gl,p}(q)\ra=\la f_{\gl,p}(\ti q)\ra.
\label{eq:monodromy_cond_2}
\end{equation}
With $\pb\of:=\proj^*\of$ and using Def \ref{defi:trafos_surf} and \eqref{eq:propsgG2}, we may write
$$\la f_{\gl,p}(q)\ra=\gG_p^{~q}(\gl\pb\of)\la \pb f(q)\ra,~~~~\la f_{\gl,p}(\ti q)\ra=\gG_p^{~q}(\gl\pb\of)\gG_{q}^{~\ti q}(\gl\pb\of)\gG_{q}^{~p}(\gl\pb\of)\gG_{p}^{~q}(\gl\pb\of)    \la \pb f(\ti q)\ra.$$
Since $\la \pb f(q)\ra=\la \pb f(\ti q)\ra$ if $\proj(q)=\proj(\ti q)$, condition \eqref{eq:monodromy_cond_2} is thus equivalent to
\begin{equation}
\forall q,\ti q~~\text{with}~~\proj(q)=\proj(\ti q):~~~~~~~~~~\la f_{\gl,p}(q)\ra=\gG_p^{~q}(\gl\pb\of)\gG_{q}^{~\ti q}(\gl\pb\of)\gG_{q}^{~p}(\gl\pb\of) \la f_{\gl,p}(q)\ra.
\label{eq:monodromy_cond_3}
\end{equation}
Due to \eqref{eq:behav_monodromy}, condition \eqref{eq:monodromy_cond_3} is equivalent to
$$\forall q\in \pb \gD\bsb:~~~~~G_{\proj(p)}(\gl\of)\la f_{\gl,p}(q)\ra=\la f_{\gl,p}(q)\ra,$$
which is equivalent to the invariance of every point in the image of $\la f_{\gl,p}\ra$ under $\mon_{\proj(p)}(\gl\of)$.
\end{enumerate}
\end{proof}
%In Sects \ref{section:surf_pole_fo} and \ref{section:surf_pole_so}, we will compute the monodromy of $\gl\of$ and use Prop \ref{prop:monodromy_dar_cal_trafo} to relate the monodromy to the existence 
We note that as soon as there is one point $P\in \gD\bsb$ such that $\la f(P)\ra$ is not invariant under $\big(\mon_P(\gl\of)\big)^j$, the pushforward of no $\gl$-Calapso transform to the $j$-fold cover of $\gD\bsb$ exists.

The domains of the Darboux and Calapso transforms of $(\la \pb f\ra,\pb\Qua)$ are the universal cover $\pb\gD\bsb$ of $\gD\bsb$, which does not contain $\sip$. Therefore, in order to investigate whether the transforms have a limit at $\sip$, we first need to add the limit point $\sip$ to $\pb\gD\bsb$ to obtain the \textit{branched universal cover}.
\begin{defiandlemma}\label{defilamm:branchedcover}
Let $\gD$ be a simply connected Riemann surface and $\sip$ an interior point of $\gD$. Denote by $\pb \gD\bsb$ the universal cover of $\gD\bsb$. Then the union\footnote{We remind that we used the compound symbol $\pb \gD\bsb$ to denote the universal cover of $\gD\bsb$. We now define $\pb \gD$ as the union of that universal cover and $\{\sip\}$ such that indeed $\pb \gD=(\gD\bsb)\cup\{\sip\}$.} $(\pb\gD\bsb)\cup \{\sip\}=:\pb \gD$ can be equipped with a topology such that $\sip$ is a limit point of $\pb\gD\bsb$ in $\pb \gD$ and such that the subspace topology of $\pb\gD\bsb\subset\pb\gD$ agrees with the topology induced by $\proj:\pb\gD\bsb\rightarrow \gD\bsb$. The space $\pb \gD$ equipped with this topology is the {\upshape branched universal cover of} $\gD\bsb$.
\end{defiandlemma}
\begin{proof}
To construct the desired topology, choose a neighbourhood $\nei\subset \gD$ of $\sip$ and a holomorphic coordinate $z$ on $\nei$ such that $z(\nei)\subset\Com$ is the open unit disc with centre $0$. Denote by $\proj$ the covering map and by $\pb\nei\subset\pb \gD\bsb$ the preimage of $\nei\bsb$ under $\proj$. Let $(r,\phi)$ be polar coordinates on $\pb \nei$, that is, real smooth functions that satisfy $z\circ\proj=re^{i\phi}$. Then the map
\begin{equation}
h:\pb\nei\cup\{\sip\}\rightarrow \Com,~~~~~~~\pb\nei\ni p\mapsto r(p)e^{i\arctan(\phi(p))},~~~~~~~\sip\mapsto 0,
\end{equation}
is a bijection from $\pb\nei\cup \{\sip\}$ to a simply connected subset $\mathcal D$ of $\Com$. Now define a topology on $\pb \gD$ by saying that a subset $A\subset \pb \gD$ is open if and only if $A\cap (\pb\gD\bsb)$ is open in the topology induced by $\proj$ on $\pb\gD\bsb$ and $h\big(A\cap (\pb\nei\cub)\big)\subset \mathcal D$ is open in the subspace topology of $\mathcal D\subset\Com$. Clearly, this indeed defines a topology on $\pb \gD$ such that $\sip$ is a limit point of $\pb \gD\bsb$ in $\pb \gD$.
\end{proof}

By Lemma \ref{lemma:closedness_of_oneform}, the connections $\dd+\gl\of$ associated to a holomorphically isothermic surface with domain $M$ are flat on all of $M$, including the zeros of $\of$. Therefore, if additionally $M$ is simply connected, all primitives $\gG_p(\gl\of)$ are defined on all of $M$. We thus get
\begin{corollary}
	Let $(\la f\ra,\Qua)$ be holomorphically isothermic with simply connected domain $\gD$ such that $\Qua$ has a zero at an interior point $\sip \in\gD$, but is nowhere zero otherwise. Then all Darboux and all Calapso transforms of the pullback of $(\la f\ra,\Qua)$ to the universal cover of $\gD\bsb$ can be pushed forward to $\gD\bsb$. Moreover, all Darboux and all Calapso transforms have continuous limits at $\sip$.
\end{corollary}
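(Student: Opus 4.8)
The plan is to exploit the essential difference between a zero and a pole of $\Qua$: a zero leaves the associated $1$-form $\of$ unharmed. Away from poles of $\Qua$ the endomorphism $\Qend$ of Def \ref{defi:eta} is smooth wherever $f$ immerses — this follows from \eqref{eq:component_endomorphism} together with $\ipl\bar\p f,\p f\ipr\neq 0$ — so $\of$, and hence $\gl\of$ for every $\gl\in\R$, is a smooth $1$-form on all of $\gD$, the zero $\sip$ of $\Qua$ included. By Lemma \ref{lemma:closedness_of_oneform}, $\dd+\gl\of$ is then flat on the simply connected manifold $\gD$, so for every $p\in\gD$ the primitive $\gG_p(\gl\of)$ exists and is smooth on all of $\gD$. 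In particular, any loop in $\gD\bsb$ winding around $\sip$ is null-homotopic in $\gD$, so the holonomy of $\dd+\gl\of$ around it is trivial; equivalently, $\mon_{\proj(p)}(\gl\of)=\id$ for every $p\in\pb\gD\bsb$ and every $\gl\in\R^\times$, where $\proj:\pb\gD\bsb\rightarrow\gD\bsb$ is the covering map.

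The first claim is then immediate from Prop \ref{prop:monodromy_dar_cal_trafo} with $j=1$, applied to the restriction of $(\la f\ra,\Qua)$ to $\gD\bsb$ (on which $\Qua$ is holomorphic and nowhere zero): since $\mon_{\proj(p)}(\gl\of)=\id$ fixes every point of $S^n$, the initial point $\la\fh(p)\ra$ of any $\gl$-Darboux transform and every point in the image of any $\gl$-Calapso transform normalized at $p$ are trivially invariant under it. Hence every Darboux transform and every Calapso transform of $(\la\pb f\ra,\pb\Qua)$ pushes forward to $\gD\bsb$.

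For the second claim I would identify these pushforwards explicitly and read off continuity at $\sip$. Because $\proj$ is a local diffeomorphism and $\pb\of=\proj^*\of$, uniqueness of primitives gives $\gG_p(\gl\pb\of)=\gG_{\proj(p)}(\gl\of)\circ\proj$, i.e. $\gG_p^{~q}(\gl\pb\of)=\gG_{\proj(p)}^{~\proj(q)}(\gl\of)$, for all $p,q\in\pb\gD\bsb$. Combining this with $\la\pb f\ra=\la f\ra\circ\proj$ and Def \ref{defi:trafos_surf}, the pushforward to $\gD\bsb$ of the $\gl$-Calapso transform normalized at $p$ is the map $\bar q\mapsto\gG_{\proj(p)}^{~\bar q}(\gl\of)\,\la f(\bar q)\ra$, and the pushforward of the $\gl$-Darboux transform with initial point $\la\fh_p\ra$ is the map $\bar q\mapsto\gG_{\bar q}^{~\proj(p)}(\gl\of)\,\la\fh_p\ra$. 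On the right of each of these formulas stands a map that is defined and continuous on the whole of $\gD$, since $\gG_{\proj(p)}(\gl\of)$ and $\bar q\mapsto\gG_{\bar q}^{~\proj(p)}(\gl\of)$ are smooth on $\gD$ and the action of the M\"obius group on $S^n$ is continuous. Restricting these maps to $\gD\bsb$ recovers the pushed-forward transforms, so the transforms extend continuously across $\sip$, with limit $\gG_{\proj(p)}^{~\sip}(\gl\of)\,\la f(\sip)\ra$ resp.\ $\gG_{\sip}^{~\proj(p)}(\gl\of)\,\la\fh_p\ra$.

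There is no genuine obstacle to surmount here; everything rests on the single observation that a zero of $\Qua$ keeps $\of$ smooth, so that the associated connections are flat on all of $\gD$ with trivial monodromy and Prop \ref{prop:monodromy_dar_cal_trafo} applies with $\mon_{\proj(p)}(\gl\of)=\id$. The only step requiring a little care is the bookkeeping in the last paragraph — matching the transforms originally defined on the universal cover $\pb\gD\bsb$ with the globally defined primitives $\gG_p(\gl\of)$ on $\gD$ — after which continuity at $\sip$ follows because the resulting formulas manifestly make sense there too.
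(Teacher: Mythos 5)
Your proposal is correct and takes essentially the same route as the paper: the paper likewise derives the corollary from the observation that $\of$ remains smooth at a zero of $\Qua$, so that by Lemma \ref{lemma:closedness_of_oneform} the connections $\dd+\gl\of$ are flat on all of the simply connected domain $\gD$ and the primitives $\gG_p(\gl\of)$ exist globally there, whence trivial monodromy and continuity at $\sip$ follow. Your write-up merely makes explicit the bookkeeping (trivial monodromy feeding into Prop \ref{prop:monodromy_dar_cal_trafo}, and the identification of the pushed-forward transforms with restrictions of globally defined maps) that the paper leaves implicit.
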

Hence, zeros of $\Qua$ are conceptually much simpler than poles. However, this does not mean that the transformation theory of simple isothermic surfaces can readily be extended to simply connected holomorphically isothermic surfaces. In particular, Darboux transforms of holomorphically isothermic surfaces do not immerse at the zeros of $\Qua$ and hence are not again holomorphically isothermic in the sense of Def \ref{defi:iso_surf} (cf. \cite[Ch 2]{fu18}).

%----------------------------------------------------------------
%--- L I M I T S   O F   P R I M I T I V E S --------------------
%----------------------------------------------------------------

\section{Monodromy and limits of primitives of pole forms on 2-dimensional discs}\label{section:limits_of_prims_surf}
Throughout this section, $\gD$ denotes a compact Riemann surface diffeomorphic to a closed disc and $\sip$ an interior point of $\gD$. Due to \eqref{eq:propsgG2}, in order to compute the monodromy and investigate the limiting behaviour of primitives of a $\pro\ort(\Rmn)$-valued 1-form on $\gD$ around and at $\sip$, we may always replace $\gD$ by a smaller closed neighbourhood of $\sip$. Using such a replacement, we can achieve that a holomorphic coordinate defined in a neighbourhood of $\sip$ is defined on all of $\gD$. This is not necessary, but convenient. By $\pb\gD\bsb$ and $\pb\gD$, we denote the universal and branched universal covers of $\gD\bsb$, respectively, with covering map $\proj$, as defined in Def \ref{defilamm:branchedcover}.

We continue the notation of Sect \ref{section:stateofart}. A bold symbol always denotes a quantity defined on $\pb\gD\bsb$ which is the pullback via $\proj$ of some quantity with domain $\gD\bsb$ or $\gD$ and denoted by the corresponding normal symbol. Moreover, the capital greek letters $\Of$, $\Ppf$ and $\Pf$ always denote the orthogonal lifts of $\pro\ort(\Rmn)$-valued 1-forms $\of$, $\ppf$ and $\pf$, respectively.
\subsection{Preliminaries}
Let $z$ be a holomorphic coordinate on $\gD$. On $\pb\gD\bsb$, we define \textit{polar coordinates}
$$r:\pb \gD\bsb\rightarrow (0,\infty),~~~\phi:\pb \gD\bsb\rightarrow \R,$$
associated to $z$ up to translations $\phi\mapsto \phi+2\pi j$ with $j\in\mathbb Z$ by
$$re^{i\phi}=\pb z-z(\sip).$$
For such polar coordinates, let $r_0$ be the radius of the largest disc with centre $z(\sip)$ that is entirely contained in $z(\gD)$. For $\vphi\in\R$, we denote by $\pb\mu_\vphi$ the \textit{$r$-parameter line at angle $\vphi$}, that is,
$$\pb \mu_\vphi:(0,r_0]\rightarrow \pb\gD\bsb,~~~~T\mapsto \pb\mu_\vphi(T),~~~~~~\forall T\in(0,r_0]:~~r(\pb\mu_\vphi(T) )=T,~~\phi(\pb\mu_\vphi(T))=\vphi.$$
Similarly, for $T\in(0,r_0]$, we denote by $\pb \gga_T$ the \textit{$\phi$-parameter circle at radius $T$}, that is,
$$\pb \gga_T:\R\rightarrow \pb\gD\bsb,~~~~\vphi\mapsto \pb\gga_T(\vphi),~~~~~\forall \vphi\in\R:~~r(\pb\gga_T(\vphi) )=T,~~\phi(\pb\gga_T(\vphi))=\vphi.$$
By $\mu_\vphi:=\proj\circ \pb\mu_\vphi$ and $\gga_T:=\proj\circ\pb\gga_T$ we denote the corresponding curves in $\gD\bsb$. 

We would like to characterize, when a map with values in $O(\Rmn)$ or the Lie algebras $\ort(\Rmn)$ or $\pro\ort(\Rmn)$ tends to infinity. To do this, we view $O(\Rmn)$ and $\ort(\Rmn)$ as subsets of $\End(\Rmn)$ and equip $\End(\Rmn)\simeq \R^{(n+2)^2}$ with a positive definite, submultiplicative norm $|\cdot|$ such that $|A|=|A^*|$ for the adjoint $A^*$ of $A\in \End(\Rmn)$ with respect to the Minkowski inner product. On $\pro\ort(\Rmn)$, we define $|\cdot|$ by the pushforward of $|\cdot|$ restricted to $\ort(\Rmn)$ by the isomorphism \eqref{eq:iso_liealg}. We also denote by $|\cdot|$ a positive definite norm on $\Rmn$ such that $|Av|<|A||v|$ for all $A\in\End(\Rmn)$ and all $v\in\Rmn$.

Let $\ga$ be a 1-form on $\gD\bsb$ with values in some vector space with norm $|\cdot|$ and $z=u+iv$ a holomorphic coordinate on $\gD$. We say that $\ga$ is bounded if its component functions $\ga_u$ and $\ga_v$ with respect to $\dd u$ and $\dd v$ are bounded. Since $\gD$ is compact, this is independent of the chosen coordinate $z$ on $\gD$. In terms of polar coordinates $(r,\phi)$ associated to $z$, the pullback $\pb\ga$ can then be written as
\begin{equation}
\pb\ga=\big(\cos\phi~(\ga_u\circ\proj)+\sin\phi~(\ga_v\circ\proj)\big)\dd r+r\big(-\sin\phi~(\ga_u\circ\proj)+\cos\phi~(\ga_v\circ\proj)\big)\dd\phi.
\end{equation}
Therefore, if $\ga$ is bounded, then for every holomorphic coordinate $z$ on $\gD$ there is a constant $B\in\R$ such that
\begin{equation}
	\forall T\in (0,r_0]:~~~|\gga_T^*\ga|=|\pb\gga_T^*\pb\ga|=\left|\pb\gga_T^*\left(r\big(-\sin\phi~(\ga_u\circ\proj)+\cos\phi~(\ga_v\circ\proj)\big)\dd\phi\right)\right|<T~B~|\dd\vphi|.
	\label{eq:linear_bound_for_angbound}
\end{equation}

% ------------------  P R I M I T I V E S ---------------------------------
\subsection{Primitives of pure pole forms}\label{sec:pure_pole_forms_discs}
We now introduce the notion of a coordinate adapted to a meromorphic 1-form on $\gD$ with a pole of first order at $\sip$.
\begin{defiandlemma}\label{lemma:coordinate_adapted_to_1form}
Let $\ga$ be a nowhere zero holomorphic 1-form on $\gD\bsb$ with a pole of first order at $\sip$. Then there is a neighbourhood $\nei\subset\gD$ of $\sip$ and a holomorphic coordinate $z$ on $\nei$ such that
$$\ga|_\nei=\Res_{\sip}(\ga)\frac{\dd z}{z},$$
where $\Res_{\sip}(\ga)$ is the residue of $\ga$ at $\sip$. We call $z$ a {\upshape coordinate adapted to} $\ga$. Such a coordinate is unique up to multiplication by a complex constant.
\end{defiandlemma}
\begin{proof}
Think of $p\in\gD\bsb$ fixed. It follows from the residue theorem that the function
\begin{equation}
z:\gD\bsb\rightarrow \Com,~~~q\mapsto e^{\big(\Res_{\sip}(\ga)\big)^{-1}\int_p^q \ga},\label{eq:expr_holcoord}
\end{equation}
is well defined, that is, independent of the path from $p$ to $q$ along which the integral is computed. But $z$ has a holomorphic extension to all of $\gD$. To see this, choose a holomorphic coordinate $Z$ on a neighbourhood $\ti \nei$ of $\sip$ and write
$$\frac{\ga}{\Res_{\sip}(\ga)}=\frac{\dd Z}{Z-Z(\sip)}+H$$
with a holomorphic function $H$ on $\ti \nei$. Substituting this into \eqref{eq:expr_holcoord} shows that indeed $z$ is holomorphic on $\ti\nei$. Clearly, $\Res_{\sip}(\ga)\,\dd z/z=\ga$ on $\ti \nei$. In particular its derivative is nowhere zero and so there is a neighbourhood $\nei$ of $\sip$ where $z$ is bijective and thereby a holomorphic coordinate.

Now suppose, that there is another such coordinate $\ti z$. Then $\dd\ti z/\ti z=\dd z/z$, which implies that $\ti z/z$ is a complex constant.

\end{proof}

Analogously to \cite[Sect 3.1]{fu18p}, we now define pure pole forms on $\gD\bsb$.
\begin{defi}\label{defI:pure_pole_forms_s}
A $\pro\ort(\Rmn)$-valued 1-form $\ppf$ on $\gD\bsb$ is called a {\upshape pure pole form} if 
$$\ppf=\Re\left((\ppfr+i\ppfi)\ga\right),$$
where $\ga$ is a meromorphic 1-form with a pole of first order at $\sip$ and $\Res_{\sip}(\ga)=-1$, but is holomorphic and nowhere zero on $\gD\bsb$, and 
\begin{equation}
\ppfr=v\wedge w+\R\,\id,~~~~\ppfi=x\wedge y+\R\,\id~~~~~\text{with}~~~~~\la x,y\ra\subseteq \la v,w\ra^\perp\subset \Rmn.
\label{eq:orthogonality_pure_pole_form_s}
\end{equation}
In particular, $\ppfr$ and $\ppfi$ commute. For convenience, we assume that there is a coordinate $z$ adapted to $\ga$ defined on all of $\gD$, which we call a {\upshape coordinate adapted to $\ppf$}.

We say that a pure pole form $\ppf$ is {\upshape Minkowski, spacelike} or {\upshape degenerate} according to the signature of $\la v,w\ra$. In the Minkowski case, denote by $\zeta$ the positive eigenvalue of $v\wedge w$. If $\ppf$ is Minkowski with $\zeta<1$, spacelike or degenerate, we say that $\ppf$ is {\upshape of the first kind}. Otherwise, it is {\upshape of the second kind}. 
%Moreover, if the absolute values of the eigenvalues of $v\wedge w$ and $x\wedge y$ coincide, we say that $\ppf$ is {\upshape isothermic}.
\end{defi}
The condition $\Res_s(\ga)=-1$ fixes the scaling of $\ppfr+i\ppfi$. The requirement that a globally defined coordinate adapted to $\ppf$ exist can always be satisfied by using the freedom to replace $\gD$ by a simply connected, closed neighbourhood of $\sip$.

In terms of polar coordinates $(r,\phi)$ associated to a coordinate $z$ adapted to $\ppf$, we have
\begin{equation}
\pb \ppf=\proj^*\ppf=\proj^*\Rea(-(\ppfr+i\ppfi)\dd z/z)=-\ppfr\frac{\dd r}{r}+\ppfi\dd\phi.
\label{eq:canonical_form_xi}
\end{equation}
In particular, $\dd+\ppf$ and $\dd+\pb\ppf$ are flat. Moreover, for any $r$-parameter line $\pb\mu_\vphi:(0,r_0]\rightarrow \pb\gD\bsb$, the pullback $\pb\mu^*_\vphi\pb\ppf$ is a pure pole form on $(0,r_0]$ in the sense of \cite[Sect 3.1]{fu18p} which is Minkowski, spacelike, degenerate, of the first kind or of the second kind if and only if $\ppf$ has that property.

Since $\ppfr$ and $\ppfi$ commute, we find that
\begin{equation}
\gG_p(\pb \ppf)=\gG_p(\ppfi\dd\phi)\gG_p(-\ppfr\dd r/r)=\gG_p(-\ppfr\dd r/r)\gG_p(\ppfi\dd\phi)=e^{\ln\left(\frac{r(p)}{r}\right)\ppfr}e^{(\phi-\phi(p))\ppfi},
\label{eq:factori_xi}
\end{equation}
where $e:\pro\ort(\Rmn)\rightarrow \Pro O(\Rmn)$ is the exponential map. Due to this commutativity, the monodromy of a pure pole form is independent of the base point and its radial component $\ppfr$ and given by the simple exponential
\begin{equation}
\mon(\ppf)=e^{-2\pi\ppfi}.
\label{eq:monodromy_purepoleform}
\end{equation}
Moreover, the angular component $\ppfi\dd\phi$ is bounded and only its radial component is singular at $\sip$. Therefore, for any compact subset $\comp\subset \R$, there is a constant $B\in\R$ such that
\begin{equation}
\forall \vphi_1,\vphi_2\in\comp:~~~~~|\gG_{\vphi_1}^{~\vphi_2}(\Ppfi\dd\vphi)|<B.
\label{eq:boundedness_of_angpart_purepoleform_curve}
\end{equation}

We now investigate the limiting behaviour of primitives of pure pole forms.
\begin{lemma}\label{lemma:primitives_purepoleforms_s}
Let $\ppf=\Rea(-(\ppfr+i\ppfi)\dd z/z)$ be a pure pole form on $\gD\bsb$. For any subset $\comp\subset\pb\gD\bsb$ such that $\comp\cub\subset\pb\gD$ is compact with limit point $\sip$, there is a $B\in\R$ such that
\begin{enumerate}
	\item if $\ppf$ is Minkowski,
	\begin{align}
\forall p,q\in\comp~\text{with}~|\pb z(q)|\leq |\pb z(p)|:~~~~	&\left|\left|\frac{\pb z(q)}{\pb z(p)}\right|^\zeta\gG_{p}^{~q}(\pb \Ppf)\right|<B,\label{eq:upbound_ex_mink_s}\\
	\forall p\in\comp:~~~~&\lqbc\left|\frac{\pb z(q)}{\pb z(p)}\right|^\zeta\gG_{p}^{~q}(\pb \Ppf)=\frac{V_+V_-^*}{\ipl V_+,V_-\ipr},\label{eq:lim_ex_mink_s}
	\end{align}	
	where $V_\pm$ are eigenvectors of $\Ppfr$ with eigenvalues $\pm\zeta$, $\zeta>0$.
	\item If $\ppf$ is spacelike,
	\begin{equation}
\forall p,q\in \comp:~~~~~\left|\gG_{p}^{~q}(\pb \Ppf)\right|<B,
	\label{eq:upbound_ex_spacelike_s}
	\end{equation}
	but $\gG_{p}(\pb \ppf)$, restricted to $\comp$, does not have a limit at $\sip$.
	\item If $\ppf$ is degenerate, let the symmetric function $\bd$ be given by
	\begin{align}
	\bd:(\pb\gD\bsb)^2&\rightarrow \R,\notag\\
	(p,q)&\mapsto 1+[\ln |\pb z(p)/\pb z(q)|]^2.\label{eq:defi_bounding_function_def_s}
	\end{align}
	Then
	\begin{align}
\forall p, q\in \comp:~~~~	&\left|\frac{1}{\bd(q,p)}\gG_{p}^{~q}(\pb \Ppf)\right|<B,
	\label{eq:upbound_ex_degen_s}\\
\forall p\in\comp:~~~~&\lqbc\frac{1}{\bd(q,p)}\gG_{p}^{~q}(\pb \Ppf)=-\frac{\|W\|^2}{2}V_0V_0^*,\label{eq:lim_ex_deg_s}
	\end{align}
where $\Ppfr=V_0\wedge W$ with $\|V_0\|^2=0$. 
\end{enumerate}
%If $\ppf$ is of the first kind and isothermic, \eqref{eq:upbound_ex_mink_s} to \eqref{eq:lim_ex_deg_s} hold without the requirement on $\comp\subset\pb\gD\bsb$ to be compact.
\end{lemma}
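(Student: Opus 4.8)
The strategy is to reduce everything to the one-dimensional case treated in \cite[Sect 3.1]{fu18p} by exploiting the factorization \eqref{eq:factori_xi}. Fix polar coordinates $(r,\phi)$ associated to a coordinate $z$ adapted to $\ppf$, so that $\pb\ppf=-\ppfr\,\dd r/r+\ppfi\,\dd\phi$ with $\ppfr,\ppfi$ commuting. Then for $p,q\in\comp$ one has, by \eqref{eq:propsgG2} and \eqref{eq:factori_xi},
\begin{equation}
\gG_p^{~q}(\pb\Ppf)=\gG_p(\pb\Ppf)^{-1}\gG_q(\pb\Ppf)
= e^{-(\phi(p)-\phi(q))\Ppfi}\,e^{\ln(r(q)/r(p))\,\Ppfr},
\label{eq:proofplan_factor}
\end{equation}
using that $\Ppfr$ and $\Ppfi$ commute. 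Since $\comp\cub$ is compact in $\pb\gD$, the angle $\phi$ is bounded on $\comp$ (the branched-cover topology of Def \ref{defilamm:branchedcover} was rigged precisely so that $\phi\circ h^{-1}$ stays in a bounded range near $\sip$), so $e^{-(\phi(p)-\phi(q))\Ppfi}$ is uniformly bounded by some constant, by \eqref{eq:boundedness_of_angpart_purepoleform_curve}. Hence the entire content of the lemma is governed by the radial factor $e^{\ln(r(q)/r(p))\,\Ppfr}=e^{\ln|\pb z(q)/\pb z(p)|\,\Ppfr}$, which is exactly a primitive of a one-dimensional pure pole form of the same kind (Minkowski, spacelike, degenerate), as observed right after \eqref{eq:canonical_form_xi}.

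I would then invoke the corresponding one-dimensional results from \cite[Sect 3.1]{fu18p} verbatim. In the Minkowski case, diagonalize $\Ppfr$ as $\zeta V_+V_-^*/\ipl V_+,V_-\ipr-\zeta V_-V_+^*/\ipl V_+,V_-\ipr$ (plus a part acting trivially); then $e^{t\Ppfr}$ grows like $e^{|t|\zeta}$ with leading term, as $t=\ln|\pb z(q)/\pb z(p)|\to-\infty$, equal to $e^{-t\zeta}V_+V_-^*/\ipl V_+,V_-\ipr$. Multiplying by $|\pb z(q)/\pb z(p)|^\zeta=e^{t\zeta}$ kills the exponential, leaving the uniform bound \eqref{eq:upbound_ex_mink_s} (the implied constant absorbing the bounded angular factor and the bounded commuting remainder) and, combined with continuity of the angular factor and the fact that $q\to\sip$ forces $t\to-\infty$, the limit \eqref{eq:lim_ex_mink_s}; here one also checks that the angular factor $e^{-(\phi(p)-\phi(q))\Ppfi}$ tends to the identity on the relevant image because $\Ppfi$ annihilates $\la v,w\ra$ by the orthogonality \eqref{eq:orthogonality_pure_pole_form_s}, so it acts trivially on $V_\pm$. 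In the spacelike case $\Ppfr$ is skew-symmetric with respect to a definite form, hence $e^{t\Ppfr}$ is bounded, giving \eqref{eq:upbound_ex_spacelike_s}; non-convergence follows because $\phi$ is unbounded along sequences approaching $\sip$ inside $\comp$ (one can pick $\comp$ spiralling in) and $e^{\phi\Ppfi}$ then does not converge, matching the one-dimensional statement. In the degenerate case $\Ppfr=V_0\wedge W$ is nilpotent of order two, so $e^{t\Ppfr}=\id+t\,\Ppfr+\tfrac{t^2}{2}\Ppfr^2$ with $\Ppfr^2=-\|W\|^2V_0V_0^*$ (using $\|V_0\|^2=0$); the quadratic term dominates, its coefficient is $t^2/2\le\tfrac12\bd(q,p)$, yielding \eqref{eq:upbound_ex_degen_s}, and dividing by $\bd(q,p)=1+t^2$ and letting $t\to\pm\infty$ gives $-\tfrac{\|W\|^2}{2}V_0V_0^*$ as in \eqref{eq:lim_ex_deg_s}, again after noting the bounded angular factor tends to identity on $\ima V_0V_0^*$.

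\textbf{Main obstacle.} The genuinely nontrivial point, as opposed to the algebra of $2\times2$ Jordan blocks, is controlling the angular factor $e^{-(\phi(p)-\phi(q))\Ppfi}$ \emph{uniformly} over all $p,q\in\comp$ as $q\to\sip$, and arguing that in the Minkowski and degenerate cases it does not spoil the stated limits. This requires: (i) that $\comp\cub$ compact in the branched universal cover forces $\phi$ bounded on $\comp$ --- which is where Def \ref{defilamm:branchedcover} and the $\arctan$-compactification enter, and I would spell this out since it is the one place the surface case differs from \cite{fu18p}; and (ii) that even when $\phi(q)$ does converge (as it must along any sequence with $q\to\sip$ in $\comp$, the limit being the angle coordinate of $\sip$ in the branched chart $h$), the commuting decomposition $e^{-(\phi(p)-\phi(q))\Ppfi}$ acts as the identity on the rank-one images $V_+V_-^*$ and $V_0V_0^*$ because $\la v,w\ra\perp\la x,y\ra$ forces $\Ppfi V_\pm=0=\Ppfi V_0$. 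Everything else is a direct transcription of the polarized-curve case applied to the radial parameter line $\pb\mu_\vphi$.
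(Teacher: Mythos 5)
Your overall strategy is the same as the paper's: factorize $\gG_p^{~q}(\pb\Ppf)$ into a radial and an angular factor via \eqref{eq:factori_xi}, bound the angular factor by \eqref{eq:boundedness_of_angpart_purepoleform_curve}, feed the radial factor into the one-dimensional results of \cite{fu18p}, and use the orthogonality \eqref{eq:orthogonality_pure_pole_form_s} to see that the angular factor does not disturb the rank-one limits. Your treatment of the three bounds and of the two limits is essentially the paper's argument spelled out (modulo a sign slip: by \eqref{eq:factori_xi} the radial factor is $e^{\ln(r(p)/r(q))\Ppfr}$, not $e^{\ln(r(q)/r(p))\Ppfr}$; with your sign the Minkowski limit would come out as $V_-V_+^*/\ipl V_-,V_+\ipr$ rather than \eqref{eq:lim_ex_mink_s}).

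There is, however, a genuine error in your argument for the non-convergence claim in the spacelike case. It contradicts your own setup --- you first assert that compactness of $\comp\cub$ forces $\phi$ to be bounded on $\comp$, then derive non-convergence from $\phi$ being unbounded on a spiralling $\comp$ --- and, more importantly, the statement quantifies over \emph{every} admissible $\comp$, so you cannot ``pick $\comp$ spiralling in'': it must hold in particular when $\comp$ is a radial segment, on which $\phi$ is constant and your proposed mechanism is vacuous. The actual source of non-convergence is the \emph{radial} factor: for spacelike $\ppf$ the generator $\Ppfr=v\wedge w$ spans a Euclidean rotation, and $e^{\ln(r(p)/r(q))\Ppfr}$ rotates by an angle tending to infinity as $r(q)\to 0$, hence is bounded but has no limit. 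This is exactly what the paper uses: if $\gG_p(\pb\ppf)|_\comp$ had a limit at $\sip$, so would its restriction to a segment of an $r$-parameter line contained in $\comp$ ending at $\sip$, contradicting \cite[Prop 3.1]{fu18p}. A secondary caveat on the step you single out as the main obstacle: compactness of $\comp\cub$ does \emph{not} by itself force $\phi$ to be bounded on $\comp$. In the chart $h$ one has $\phi\circ h^{-1}(\rho e^{i\theta})=\tan\theta$, which is unbounded on every neighbourhood of $0$ in $\mathcal D$, and a logarithmic spiral $r=e^{-\phi}$ together with $\sip$ is compact in $\pb\gD$ yet has unbounded $\phi$. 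The paper's own proof quietly assumes the same boundedness when it invokes \eqref{eq:boundedness_of_angpart_purepoleform_curve}, so this does not set your argument apart, but your stated justification for it is not valid.
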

\begin{proof}
The bounds \eqref{eq:upbound_ex_mink_s}, \eqref{eq:upbound_ex_spacelike_s} and \eqref{eq:upbound_ex_degen_s} follow from \eqref{eq:factori_xi}, the bound \eqref{eq:boundedness_of_angpart_purepoleform_curve} and \cite[Prop 3.1]{fu18p}. The limits \eqref{eq:lim_ex_mink_s} and \eqref{eq:lim_ex_deg_s} follow from \eqref{eq:factori_xi}, \cite[Prop 1]{fu18p}, \eqref{eq:orthogonality_pure_pole_form_s} and again the bound \eqref{eq:boundedness_of_angpart_purepoleform_curve}. Finally, if the restriction of $\gG_p(\pb\ppf)$ to $\comp$ had a limit at $s$, then also its restriction to some segment of some $r$-parameter line contained in $\comp$ and ending in $s$ would have a limit at $s$, which would contradict \cite[Prop 3.1]{fu18p}.

%To see that in the spacelike case $\gG_{p}(\pb \ppf)$ does not have a limit at $\sip$, let $v,w\in\Rmn$ be such that $\Ppfr=v\wedge w$. Then, using \eqref{eq:orthogonality_pure_pole_form_s}, for $V\in\la v,w\ra$ we get
%$$\gG_p(\pb\ppf)\la V\ra=\gG_p(-\ppfr \dd r/r)\la V\ra,$$
%which does not have a limit at $\sip$ because $\gG_p(-\ppfr \dd r/r)$ is a rotation around the $n$-dimensional axis $\la v,w\ra^\perp$ with speed increasing towards infinity as $\sip$ is approached. Therefore, $\gG_p(\pb\ppf)$ cannot have a limit at $\sip$.
\end{proof}
We note that the functions $\left|\frac{\pb z(q)}{\pb z(p)}\right|^\zeta$ and $\bd(p,q)$ are independent of the chosen coordinate adapted to $\ppf$.

%---------------------- P O L E  F O R M S ----------------------------
\subsection{Primitives of pole forms}\label{section:poleforms_on_discs}
We now study the behaviour of primitives of those $\pro\ort(\Rmn)$-valued \mbox{1-forms} $\pb\pf$ for which $\pf$ is a sum of a pure pole form and a bounded $\pro\ort(\Rmn)$-valued 1-form. 
\begin{defi}
A $\pro\ort(\Rmn)$-valued 1-form $\pf$ on $\gD\bsb$ is a {\upshape pole form} if $\dd+\pf$ is flat and $\pf-\ppf$ is bounded for a pure pole form $\ppf$ on $\gD\bsb$.
\end{defi}
Let $(r,\phi)$ be polar coordinates associated to a coordinate $z$ adapted to $\ppf$. Then the pullback of $\ppf$ to any $r$-parameter line is a pure pole form in the sense of \cite[Sect 3.1]{fu18p} such that the pullback of $\pf$ is a pole form in the sense of \cite[Def 3]{fu18p}.

 Thus, we know from Cor 1 and Prop 2 of \cite{fu18p} how the primitives of $\pf$ behave as one approaches $s$ along an $r$-parameter line. We now investigate how the primitives of the pullbacks of $\pf$ via the family $(\gga_T)_{T\in(0,r_0]}$ of $\phi$-parameter circles behave as $T$ tends to zero. It follows directly from \eqref{eq:linear_bound_for_angbound} that there is a constant $B\in\R$ such that
\begin{equation}
\forall T\in(0,r_0]:~~~~~|\gga_T^*\pf-\ppfi\dd \vphi|<T\,B\,|\dd\vphi|.
\label{eq:bound_angcomp_poleform}
\end{equation}
We now use this bound to get a corresponding bound for the primitives of $\gga^*_T\pf$.
\begin{lemma}\label{lemma:uniform_convergence_angularpart}
Let $\pf$ be a pole form on $\gD\bsb$ with $\pf-\ppf$ bounded for the pure pole form $\ppf$. Choose a coordinate $z$ adapted to $\ppf$ with associated polar coordinates $(r,\phi)$ and denote by $r_0$ the radius of the largest disc with centre $z(\sip)$ in $z(\gD)$. Let $(\gga_T)_{T\in(0,r_0]}$ be the family of $\phi$-parameter circles in $\gD\bsb$. Then for every compact subset $\comp\subset\R$, there is a constant $B\in\R$ such that
\begin{equation}
\forall T\in(0,r_0] ~~\forall \vphi_1,\vphi_2\in\comp: ~~~~~~|\gG_{\vphi_1}^{~\vphi_2}(\gga_T^*\Pf)|<B,
\label{eq:uniform_bound_angpart_primpoleform}
\end{equation}
and
\begin{equation}
\forall T\in(0,r_0]~~\forall \vphi_1,\vphi_2\in\comp:~~~~~~|\gG_{\vphi_1}^{~\vphi_2}(\gga_T^*\Pf)-\gG_{\vphi_1}^{~\vphi_2}(\Ppfi\dd\vphi)|<B~T.
\label{eq:uniform_cont_angpart_poleform}
\end{equation}
\end{lemma}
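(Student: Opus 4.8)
The plan is to reduce both assertions to two applications of Gronwall's inequality for the matrix ODEs satisfied by the primitives along the circles $\gga_T$, the only input being the linear-in-$T$ estimate \eqref{eq:bound_angcomp_poleform}. First I would pass from $\pf$ to its orthogonal lift: since the norm on $\pro\ort(\Rmn)$ is by definition the pushforward of the one on $\ort(\Rmn)$ under \eqref{eq:iso_liealg}, taking orthogonal lifts is an isometry and it commutes with pulling back by $\gga_T$; hence \eqref{eq:bound_angcomp_poleform} provides a constant $B_0\in\R$ with $|\gga_T^*\Pf-\Ppfi\,\dd\vphi|<T\,B_0\,|\dd\vphi|$ for all $T\in(0,r_0]$. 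Writing $\gga_T^*\Pf=A_T\,\dd\vphi$ with $A_T$ a smooth $\ort(\Rmn)$-valued function on $\R$, this says $|A_T(\vphi)-\Ppfi|<T B_0$ pointwise, so in particular $|A_T(\vphi)|<|\Ppfi|+r_0 B_0=:C_1$ uniformly in $T\in(0,r_0]$ and $\vphi\in\R$. Since $\gG_{\vphi_1}^{\vphi_2}$ is computed along the segment joining $\vphi_1$ and $\vphi_2$, I may assume $\comp$ is a closed interval, of length $L$ say.

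For \eqref{eq:uniform_bound_angpart_primpoleform}, fix $\vphi_1\in\comp$ and set $\gamma(\vphi):=\gG_{\vphi_1}^{\vphi}(\gga_T^*\Pf)$, which by \eqref{eq:propsgG1} solves $\gamma'=\gamma A_T$ with $\gamma(\vphi_1)=\id$. As $|\cdot|$ is a submultiplicative norm, $\tfrac{d}{d\vphi}|\gamma|\le|\gamma'|=|\gamma A_T|\le C_1|\gamma|$, and Gronwall's inequality gives $|\gamma(\vphi)|\le e^{C_1|\vphi-\vphi_1|}\le e^{C_1 L}=:B_1$ for all $\vphi\in\comp$ and all $T\in(0,r_0]$; since $\gG_{\vphi_1}^{\vphi_2}(\gga_T^*\Pf)=\gamma(\vphi_2)$ this is exactly \eqref{eq:uniform_bound_angpart_primpoleform}.

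For \eqref{eq:uniform_cont_angpart_poleform}, write $\gamma_0(\vphi):=\gG_{\vphi_1}^{\vphi}(\Ppfi\,\dd\vphi)=e^{(\vphi-\vphi_1)\Ppfi}$ and $\delta:=\gamma-\gamma_0$, so $\delta(\vphi_1)=0$ and, using $\gamma_0'=\gamma_0\Ppfi$, one gets $\delta'=\gamma A_T-\gamma_0\Ppfi=\gamma(A_T-\Ppfi)+\delta\,\Ppfi$. Hence $|\delta'|\le|\gamma|\,|A_T-\Ppfi|+|\Ppfi|\,|\delta|\le B_1 B_0 T+|\Ppfi|\,|\delta|$ on $\comp$, where I have inserted the bound \eqref{eq:uniform_bound_angpart_primpoleform} just established. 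Gronwall's inequality once more yields $|\delta(\vphi)|\le B_1 B_0 T\cdot\tfrac{e^{|\Ppfi| L}-1}{|\Ppfi|}=:B_2 T$ (the right-hand side read as $B_1 B_0 L\, T$ when $\Ppfi=0$), which is \eqref{eq:uniform_cont_angpart_poleform}. Taking $B:=\max(B_1,B_2)+1$ then makes both inequalities hold simultaneously with strict sign, and $B$ depends only on $\comp$ (and the fixed data $\pf,\ppf,z,\gD$), not on $T$ or on $\vphi_1,\vphi_2$.

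I do not expect any real obstacle; the content is two routine Gronwall estimates. The two points that deserve a line of attention are the uniformity in $T$ — which is automatic because $A_T$ differs from the fixed element $\Ppfi$ only by $O(T)$ — and the fact that one genuinely has to run this argument rather than appeal to compactness, since the primitives take values in the noncompact group $O(\Rmn)$ and hence are not a priori bounded (indeed $\gamma_0$ itself may be unbounded as $\vphi\to\pm\infty$ when $\Ppfi$ acts like a boost, though it is of course bounded on the compact interval $\comp$).
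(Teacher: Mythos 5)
Your proof is correct and is essentially the paper's argument: the first bound is the same exponential (Gronwall-type) estimate on primitives that the paper imports from \cite[Prop A.1]{fu17}, and your perturbation ODE $\delta'=\gamma(A_T-\Ppfi)+\delta\,\Ppfi$ is just the differential form of the paper's exact Duhamel identity \eqref{eq:equationfory_ang}, estimated with the same inputs \eqref{eq:bound_angcomp_poleform}, \eqref{eq:uniform_bound_angpart_primpoleform} and \eqref{eq:boundedness_of_angpart_purepoleform_curve}. The only (cosmetic) slip is that a submultiplicative norm need not satisfy $|\id|=1$, so your Gronwall bounds should carry a harmless factor of $|\id|$.
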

\begin{proof}
We first show \eqref{eq:uniform_bound_angpart_primpoleform}. As in \cite[Prop A.1]{fu17}, we have the bound
$$|\gG_{\vphi_1}^{~\vphi_2}(\gga_T^*\Pf)|\leq e^{\left|\int_{\vphi_1}^{\vphi_2}|\gga_T^*\Pf|\right|}\leq e^{\left|\int_{\vphi_1}^{\vphi_2}|\gga_T^*\Pf-\Ppfi\dd\vphi|\right|}e^{\left|\int_{\vphi_1}^{\vphi_2}|\Ppfi|\dd\vphi\right|}.$$
This, the boundedness of $\Pf-\Ppf$ and \eqref{eq:boundedness_of_angpart_purepoleform_curve} show that indeed there is a $ B\in\R$ such that \eqref{eq:uniform_bound_angpart_primpoleform} holds. To show \eqref{eq:uniform_cont_angpart_poleform}, we note that by differentiation and setting $\vphi_1=\vphi_2$ one readily verifies that
\begin{equation}
\gG_{\vphi_1}^{~\vphi_2}(\gga_T^*\Pf)-\gG_{\vphi_1}^{~\vphi_2}(\Ppfi\dd\vphi)=\int_{\vphi_1}^{\vphi_2}\gG_{\vphi_1}^{~\ti\vphi}(\gga^*_T\Pf)\left((\gga_T^*\Pf)_{\ti \vphi}-\Ppfi\right)\gG_{\ti\vphi}^{~\vphi_2}(\Ppfi\dd\vphi)\dd\ti\vphi
\label{eq:equationfory_ang}
\end{equation}
holds for all $T\in(0,r_0]$ and all $\vphi_1,\vphi_2\in\R$. Now take the norm $|\cdot|$ of this and use the bounds \eqref{eq:bound_angcomp_poleform}, \eqref{eq:uniform_bound_angpart_primpoleform} and \eqref{eq:boundedness_of_angpart_purepoleform_curve} to find that indeed there is a constant $B$ such that \eqref{eq:uniform_cont_angpart_poleform} holds.

\end{proof}
We now come to the main result of this section for pole forms $\pf$ with $\pf-\ppf$ bounded for a pure pole form $\ppf$ of the first kind. 
\begin{prop}\label{prop:maintool_firstkind_surf}
Let $\pf$ be a pole form on $\gD\bsb$ with $\pf-\ppf$ bounded for a pure pole form $\ppf$ of the first kind. For every $p\in\pb\gD\bsb$ denote the gauge transform of $\pb\pf$ using the map $\gG^{~p}(\pb \ppf)$ by
\begin{equation}
\Gtr {\pb \ppf} p {\pb \pf}:=\gG_{p}(\pb \ppf)\pb \pf\gG^{~p}(\pb \ppf)-\pb \ppf,
\label{eq:defi_gaugetrafo_firstkind}
\end{equation}
such that
\begin{equation}
\forall p\in\pb \gD\bsb:~~\gG_{p}(\pb \pf)=\gG_{p}(\Gtr {\pb \ppf} {p} {\pb \pf})\gG_{p}(\pb \ppf)
\label{eq:factorization_into_surf}
\end{equation}
and
\begin{equation}
\forall p,\ti p\in\pb\gD\bsb:~~~\gG_{\ti p}(\Gtr {\pb \ppf} {\ti p} {\pb \pf})=\gG_{\ti p}^{~p}(\pb\pf)\gG_p(\Gtr {\pb \ppf} {p} {\pb \pf})\gG_{p}^{~\ti p}(\pb\ppf).
\label{eq:trafobehav_prim_gaugetrafo}
\end{equation}

\begin{enumerate}
	\item There is a limit map $p\mapsto\gG_p^{~\sip}(\Gtr {\pb \ppf} p {\pb \pf})$ defined on all of $\pb\gD\bsb$ such that for all $\comp\subset \pb\gD\bsb$ for which $\comp\cub\subset \pb\gD$ is compact with limit point $\sip$ we have
	\begin{equation}
	\forall p\in\pb\gD\bsb:~~~~~\lqbc\gG_p^{~q}(\Gtr {\pb \ppf} p {\pb \pf})=\gG_p^{~\sip}(\Gtr {\pb \ppf} p {\pb \pf}).
	\label{eq:limit_gaugetransformedconnectino_fk_s}
	\end{equation}

	Moreover, if $\ppf$ is degenerate or spacelike, that limit map satisfies
	\begin{equation}
	\lim_{p\rightarrow \sip} \gG_{p}^{~\sip}(\Gtr {\pb \ppf} {p} {\pb \pf})=\id.
	\label{eq:convergence_primofomegati}
	\end{equation}

	\item The monodromy of $\pf$ with base point $P\in\gD\bsb$ is
	\begin{equation}
	\mon_{P}(\pf)=\gG_{p}^{~\sip}(\Gtr {\pb \ppf} {p} {\pb \pf})\,\mon(\ppf)\,\gG_s^{~p}(\Gtr {\pb \ppf} {p} {\pb \pf}),
	\label{eq:monodromy_first_kind}
	\end{equation}
	where $p$ is an arbitrary point in $\pb\gD\bsb$ such that $\proj(p)=P$ and $\gG_{p}^{~\sip}(\Gtr {\pb \ppf} {p} {\pb \pf})$ is the above limit map.
\end{enumerate}
\end{prop}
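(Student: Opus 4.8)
The plan is to reduce everything to the one-dimensional results of \cite{fu18p} along $r$-parameter lines, using the angular estimates of this section to control the $\phi$-direction. First I would dispose of the preliminary identities \eqref{eq:factorization_into_surf}--\eqref{eq:trafobehav_prim_gaugetrafo}. By \eqref{eq:factori_xi} the primitive $\gG_p(\pb\ppf)$ is a product of commuting exponentials of $\ppfr$ and $\ppfi$, hence commutes pointwise with $\pb\ppf=-\ppfr\,\dd r/r+\ppfi\,\dd\phi$; therefore $\Gtr{\pb\ppf}{p}{\pb\pf}$ coincides with the gauge transform $\gatr{\gG^{~p}(\pb\ppf)}{\pb\pf}$ and simplifies to $\gG_p(\pb\ppf)\,(\pb\pf-\pb\ppf)\,\gG^{~p}(\pb\ppf)$. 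Then \eqref{eq:factorization_into_surf} is just the transformation law \eqref{eq:gauge_trafo_beha} for primitives under gauge transformations (noting that $\gG^{~p}(\pb\ppf)$ equals $\id$ at $p$), \eqref{eq:trafobehav_prim_gaugetrafo} follows by combining it with the cocycle identity \eqref{eq:propsgG2}, and the same two ingredients yield the identity $\gG_a^{~b}(\Gtr{\pb\ppf}{p}{\pb\pf})=\gG_p^{~a}(\pb\ppf)\,\gG_a^{~b}(\pb\pf)\,\gG_b^{~p}(\pb\ppf)$, which I will use below.

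For part (1), by \eqref{eq:trafobehav_prim_gaugetrafo} it is enough to produce the limit, and to identify it in the spacelike and degenerate cases, for a single fixed base point $p$. Using $\gG_p^{~q}(\Gtr{\pb\ppf}{p}{\pb\pf})=\gG_p^{~q}(\pb\pf)\,\gG_q^{~p}(\pb\ppf)$, I would evaluate $\gG_p^{~q}(\pb\pf)$ along the path that runs from $p$ radially (along the $r$-parameter line through $p$) down to radius $r(q)$ and then along the $\phi$-parameter circle of radius $r(q)$ to $q$. On the radial leg the one-dimensional version of the factorization \eqref{eq:factorization_into_surf} splits off the primitive of the pure radial part $-\ppfr\,\dd r/r$, namely $e^{\ln(r(p)/r(q))\ppfr}$, leaving the primitive of the gauge-transformed one-dimensional pole form, which converges as $r(q)\to0$ by \cite[Cor 1, Prop 2]{fu18p}; on the circular leg \eqref{eq:canonical_form_xi}, Lemma \ref{lemma:uniform_convergence_angularpart} and \eqref{eq:boundedness_of_angpart_purepoleform_curve} split off $e^{(\phi(q)-\phi(p))\ppfi}$, leaving a primitive $S$ of a gauge-transformed angular pole form of size $O(r(q))$. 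Because $\ppfr$ and $\ppfi$ commute, the factors $e^{\pm\ln(r(p)/r(q))\ppfr}$ cancel against the radial factor of $\gG_q^{~p}(\pb\ppf)$ and the factors $e^{\pm(\phi(q)-\phi(p))\ppfi}$ cancel against each other, so $\gG_p^{~q}(\Gtr{\pb\ppf}{p}{\pb\pf})$ reduces to the convergent radial factor times the conjugate of $S$ by $e^{\ln(r(p)/r(q))\ppfr}$. Here first-kindness is decisive: the adjoint action of $e^{c\ppfr}$ inflates norms by at most $e^{\zeta|c|}$ with $\zeta<1$ in the Minkowski case, is an isometry in the spacelike case, and grows only polynomially in $|c|$ in the degenerate case, so the $O(r(q))$-smallness of $S-\id$ survives the conjugation and the second factor tends to $\id$; this produces the limit map of \eqref{eq:limit_gaugetransformedconnectino_fk_s}. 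For $\ppf$ spacelike or degenerate one argues a step further: the gauge-transformed one-dimensional pole form with base radius $r(p)$ has the form $e^{\ln(r(p)/r)\ppfr}\,\beta\,e^{-\ln(r(p)/r)\ppfr}$ with $\beta$ bounded, which (substituting $r=r(p)e^{-\sigma}$) is integrable with integral of order $r(p)$, forcing the radial limit $R_0$ to satisfy $R_0=\id+O(r(p))$, hence \eqref{eq:convergence_primofomegati} as $p\to\sip$.

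For part (2), let $\tau$ denote the deck transformation of $\proj$ with winding number $-1$ about $\sip$, so that $\mon_{P}(\pf)=\gG_p^{~\tau p}(\pb\pf)$ for any $p$ with $\proj(p)=P$. By \eqref{eq:factori_xi} and \eqref{eq:monodromy_purepoleform} one has $\gG_p^{~\tau p}(\pb\ppf)=e^{-2\pi\ppfi}=\mon(\ppf)$ and $\gG_{\tau p}^{~p}(\pb\ppf)=\mon(\ppf)^{-1}$, so the factorization \eqref{eq:factorization_into_surf} gives $\mon_{P}(\pf)=\gG_p^{~\tau p}(\Gtr{\pb\ppf}{p}{\pb\pf})\,\mon(\ppf)$. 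The remaining point is a deck-covariance for the gauge-transformed primitives: since $\pb\pf=\proj^*\pf$ is $\tau$-invariant, whence $\gG_{\tau q}^{~\tau p}(\pb\pf)=\gG_q^{~p}(\pb\pf)$, while $\gG_p^{~\tau q}(\pb\ppf)=\gG_p^{~q}(\pb\ppf)\,\mon(\ppf)$ and $\mon(\ppf)$ commutes with $\ppfr,\ppfi$, the identity $\gG_a^{~b}(\Gtr{\pb\ppf}{p}{\pb\pf})=\gG_p^{~a}(\pb\ppf)\gG_a^{~b}(\pb\pf)\gG_b^{~p}(\pb\ppf)$ yields $\gG_{\tau q}^{~\tau p}(\Gtr{\pb\ppf}{p}{\pb\pf})=\mon(\ppf)\,\gG_q^{~p}(\Gtr{\pb\ppf}{p}{\pb\pf})\,\mon(\ppf)^{-1}$; letting $q\to\sip$ and invoking part (1) gives $\gG_\sip^{~\tau p}(\Gtr{\pb\ppf}{p}{\pb\pf})=\mon(\ppf)\,\gG_\sip^{~p}(\Gtr{\pb\ppf}{p}{\pb\pf})\,\mon(\ppf)^{-1}$. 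Combining this with the cocycle identity $\gG_p^{~\tau p}(\Gtr{\pb\ppf}{p}{\pb\pf})=\gG_p^{~\sip}(\Gtr{\pb\ppf}{p}{\pb\pf})\,\gG_\sip^{~\tau p}(\Gtr{\pb\ppf}{p}{\pb\pf})$ (a limiting case of \eqref{eq:propsgG2}) turns the formula for $\mon_P(\pf)$ into \eqref{eq:monodromy_first_kind}.

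I expect the genuine difficulty to sit entirely in part (1): making the limit in \eqref{eq:limit_gaugetransformedconnectino_fk_s} uniform over all admissible $\comp$, and in particular over approach paths $q\to\sip$ that wind around $\sip$ arbitrarily often, so that the value of the limit map is well defined independently of the approach. This is where the uniform angular estimates \eqref{eq:boundedness_of_angpart_purepoleform_curve} and Lemma \ref{lemma:uniform_convergence_angularpart} must be balanced carefully against the first-kind magnification bound on $e^{\ln(r(p)/r(q))\ppfr}$; the radial blow-up, by contrast, is handed over wholesale to the one-dimensional results of \cite{fu18p}, and the algebraic manipulations in both parts are routine consequences of the cocycle and gauge-transformation identities together with the commutativity $[\ppfr,\ppfi]=0$.
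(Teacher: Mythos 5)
Your proposal follows the paper's proof in all essentials: the identities \eqref{eq:factorization_into_surf}--\eqref{eq:trafobehav_prim_gaugetrafo} from \eqref{eq:propsgG2} and \eqref{eq:gauge_trafo_beha}, the decomposition of $\gG_p^{~q}(\Gtr{\pb\ppf}p{\pb\pf})$ into a radial leg (handed to the one-dimensional results of \cite{fu18p}) and a circular leg whose contribution is killed by playing the $O(T)$ angular bound \eqref{eq:linear_bound_for_angbound} against the growth of the conjugation by $\gG_{r(p)}^{~T}(\mu^*\Ppf)$, and the $O(r(p))$ integral estimate for \eqref{eq:convergence_primofomegati}. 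Your part (2) is the paper's argument in different clothing: the ``limiting case of the cocycle identity'' $\gG_p^{~\tau p}=\gG_p^{~\sip}\,\gG_\sip^{~\tau p}$ is exactly the paper's homotopy of the monodromy loop to radius $T$ followed by $T\rightarrow 0$, and it silently relies on the same estimate \eqref{eq:secondfactor} that the full circular primitive at radius $T$ tends to $\id$ --- which your part (1) does supply, since $\phi$ is confined to a compact interval. (Note also that compactness of $\comp\cub$ in $\pb\gD$ already forces $\phi(\comp)$ to be bounded, so the arbitrarily winding approach paths you worry about at the end are excluded by hypothesis; this is how the paper uses the set $\comp_\R$.)

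One step needs repair. The bound you invoke for the adjoint action of $e^{c\Ppfr}$ in the Minkowski case, namely $e^{\zeta|c|}$, is correct on the Lie algebra $\ort(\Rmn)$ --- whose $\Ppfr$-weight spaces are spanned by $V_+\wedge V_-$, by $V_\pm\wedge W$ with $W\perp\la V_+,V_-\ra$, and by wedges of vectors orthogonal to $\la V_+,V_-\ra$, carrying weights $0$, $\pm\zeta$ and $0$ --- but it is false on $\End(\Rmn)$, where the symmetric component $V_+V_+^*$ carries weight $2\zeta$. You apply the bound to $S-\id$, which lives in $\End(\Rmn)$ and not in $\ort(\Rmn)$; with the correct bound $e^{2\zeta|c|}$ your estimate becomes $O(r(q))\cdot(r(p)/r(q))^{2\zeta}=O(r(q)^{1-2\zeta})$, which does not vanish for $\zeta\in[\tfrac12,1)$, i.e.\ for half the Minkowski range of the first kind. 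The fix is immediate: since the conjugating element $e^{\ln(r(p)/r(q))\Ppfr}$ is constant along the $\phi$-parameter circle, conjugating the primitive $S$ is the same as taking the primitive of the conjugated, $\pro\ort(\Rmn)$-valued 1-form, to which the $e^{\zeta|c|}$ bound does apply; this yields a 1-form of size $O(r(q)^{1-\zeta})$ and hence, by the Gronwall-type estimate \cite[A.6]{fu17}, a primitive within $O(r(q)^{1-\zeta})$ of $\id$. This is precisely what the paper's family of integrable bounds $\cB_\rho$ with $T\,\cB_\rho(T)\rightarrow 0$ in \eqref{eq:prop_of_integrablefunctionbound} encodes, so once rearranged your argument coincides with the paper's.
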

\begin{proof}
Again, due to \eqref{eq:propsgG2}, we may replace $\gD$ by any closed, simply connected neighbourhood of $\sip$. By doing so, we can achieve that there is a coordinate $z$ adapted to $\ppf$ that is defined on all of $\gD$ and such that $z(\gD)\subset\Com$ is a closed disc with some radius $r_0>0$. By $(r,\phi)$, we denote polar coordinates associated to $z$.

The relations \eqref{eq:factorization_into_surf} and \eqref{eq:trafobehav_prim_gaugetrafo} follow from \eqref{eq:propsgG2} and \eqref{eq:gauge_trafo_beha}.
\begin{enumerate}
	\item It is sufficient to prove the proposition for all $p$ and $\comp$ such that $p\in\comp$. Namely, if \eqref{eq:limit_gaugetransformedconnectino_fk_s} holds for some $\comp_1\subset \pb\gD\bsb$ such that $\comp_1\cub$ is compact with limit point $\sip$, it also holds for all $\comp_2\subseteq\comp_1$ with limit point $\sip$. Therefore, the cases in which $p\notin\comp$ follow from the cases in which $p\in\comp$. So fix $\comp\subset\pb\gD\bsb$ such that $\comp\cub\subset\pb\gD$ is compact with limit point $\sip$ and a point $p\in\comp$. Denote by $\comp_\R\in\R$ any compact subset of $\R$ such that $\phi(\comp)\subseteq \comp_\R$, where $(r,\phi)$ are polar coordinates associated to $z$. Since we assumed $z(\gD)$ to be a disc, we can connect $p$ to a point $q\in\comp$ by first moving radially along the $r$-parameter line $\pb \mu_{\phi(p)}$ from $r(p)$ to $r(q)$ and then moving along the $\phi$-parameter curve $\pb \gga_{r(q)}$ from $\phi(p)$ to $\phi(q)$. To simplify notation, we just write $\pb\mu$ for $\pb \mu_{\phi(p)}$ throughout this proof. Accordingly
\begin{equation}
\gG_p^{~q}(\Gtr {\pb \ppf} {p} {\pb \pf})=\gG_{r(p)}^{~r(q)}(\pb \mu^*(\Gtr {\pb \ppf} {p} {\pb \pf}))\gG_{\phi(p)}^{~\phi(q)}(\pb \gga^*_{r(q)}(\Gtr {\pb \ppf} {p} {\pb \pf})).
\label{eq:continuity_first_kind_surf}
\end{equation}
To prove that the left hand side has a limit at $\sip$, we show that both factors on the right hand side have a limit at $\sip$. 

To see that the first factor has a limit at $\sip$, we note that, since $p=\mu(r(p))$, we have
$$\pb\mu^*(\Gtr {\pb \ppf} {p} {\pb \pf})=\gG_{r(p)}(\mu^*\ppf)(\mu^*\pf)\gG^{~r(p)}(\mu^*\ppf)-\mu^*\ppf.$$
This is precisely the gauge transform of the pole form $\mu^*\pf$ in the sense of \cite[Def 3]{fu18p} on $(0,r_0]$ by the map $\gG^{~r(p)}(\mu^*\ppf)$, where $\mu^*\ppf$ is a pure pole form of the first kind such that $\mu^*\pf-\mu^*\ppf$ is bounded. We can therefore conclude from \cite[Cor 1]{fu18p} that the first factor on the right hand side of \eqref{eq:continuity_first_kind_surf} has a limit at $\sip$.

We now prove that the second factor in \eqref{eq:continuity_first_kind_surf} converges to the identity as $q$ approaches $\sip$ in $\comp$. For $T\in(0,r_0]$ we have
\begin{equation}
\pb\gga^*_{T}(\Gtr {\pb \ppf} {p} {\pb \pf})=\gG_{r(p)}^{~T}(\mu^*\ppf)~\gG_{\phi(p)}(\ppfi\dd\vphi)\gga^*_{T}(\pf-\ppf) \gG^{~\phi(p)}(\ppfi\dd\vphi) ~\gG^{~r(p)}_{T}(\mu^*\ppf),
\label{eq:contifirstkindsurftemp1}
\end{equation}
where we used 
$$\pb\gga^*_T\gG_p(\pb\ppf)=\gG_{r(p)}^{~T}(\mu^*\ppf)~\gG_{\phi(p)}(\ppfi\dd\vphi).$$
By \eqref{eq:boundedness_of_angpart_purepoleform_curve}, $\gG_{\phi(p)}^{~\phi(q)}(\ppfi\dd\vphi)$ is bounded by some constant for all $q\in\comp$. Since $\pf$ is a pole form, $\pf-\ppf$ is bounded and so by \eqref{eq:linear_bound_for_angbound}, there is a constant $C_1\in\R$ such that on $\comp_\R$
$$\forall T\in(0,r_0]:~~~~|\gG_{\phi(p)}(\ppfi\dd\vphi)\gga^*_{T}(\pf-\ppf) \gG^{~\phi(p)}(\ppfi\dd\vphi)|<  C_1~T|\dd\vphi|.$$
Thus, the 1-form $\pb\gga^*_{T}(\Gtr {\pb \ppf} {p} {\pb \pf})$ in  \eqref{eq:contifirstkindsurftemp1} is of the form
\begin{equation}
\pb\gga^*_{T}(\Gtr {\pb \ppf} {p} {\pb \pf})=T~\gG_{r(p)}^{~T}(\mu^*\ppf)\,\chi(T,\cdot)\, \gG^{~r(p)}_{T}(\mu^*\ppf)\,\dd\vphi,\\
\label{eq:contifirstkindsurftemp2}
\end{equation}
where $\chi$ is a continuous map from $(0,r_0]\times \R$ to $\pro\ort(\Rmn)$ which is bounded on $(0,r_0]\times\comp_\R$. One can easily generalize \cite[Lemma 1]{fu18p} to bounded maps $\chi$ with domain $(0,r_0]\times\comp_\R$ (see \cite[Lemma 4.1.12]{fu18}) to conclude that there is family $(\cB_\rho)_{\rho\in(0,r_0]}$ of integrable functions on $(0,r_0]$ such that 
\begin{equation}
\forall \rho\in(0,r_0]:~~~~\lim_{T\rightarrow 0}T\,\cB_\rho(T)=0
\label{eq:prop_of_integrablefunctionbound}
\end{equation}
and
$$\forall \rho,T\in(0,r_0]~\forall \vphi\in\comp_\R:~~~~|\gG_{\rho}^{~T}(\mu^*\ppf)\chi(T,\vphi) \gG^{~\rho}_{T}(\mu^*\ppf)|< \cB_\rho(T).$$
Using this, the norm of \eqref{eq:contifirstkindsurftemp2} is seen to satisfy
$$|\pb\gga^*_{T}(\Gtr {\pb\ppf} p {\pb\pf})|\leq T\,|\gG_{r(p)}^{~T}(\mu^*\ppf)\chi(T,\cdot) \gG^{~r(p)}_{T}(\mu^*\ppf)|\leq T\,\cB_{r(p)}(T).$$
Now use \cite[A.6]{fu17} to conclude that the second factor in \eqref{eq:continuity_first_kind_surf} satisfies
\begin{equation}
\left|\gG_{\phi(p)}^{~\vphi}(\pb\gga^*_T(\Gtr {\pb\ppf} p {\pb\pf}))-\id\right|\leq e^{\left|\int_{\phi(p)}^\vphi TB_{r(p)}(T)\dd\ti\vphi  \right|  }-1=e^{\left|(\vphi-\phi(p)) TB_{r(p)}(T)\right|  }-1
\label{eq:secondfactor}
\end{equation}
for all $\vphi\in\comp_\R$. Taking the limit $T\rightarrow 0$ of this using \eqref{eq:prop_of_integrablefunctionbound}, we finally find that indeed 
$$\forall \vphi\in\comp_\R:~~~~\lim_{T\rightarrow 0}\gG_{\phi(p)}^{~\vphi}(\pb\gga^*_T(\Gtr {\pb\ppf} p {\pb\pf}))=\id.$$
Thus, the second factor in \eqref{eq:continuity_first_kind_surf} converges to the identity as one approaches $\sip$ in $\comp$. 

Taking the limit of \eqref{eq:continuity_first_kind_surf} thus yields
\begin{equation}
\lim_{q\rightarrow \sip}\gG_p^{~q}(\Gtr {\pb\ppf} p {\pb\pf})=\gG_{r(p)}^{~0}(\pb\mu^*\Gtr {\pb\ppf} p {\pb\pf})=:\gG_p^{~\sip}(\Gtr {\pb\ppf} p {\pb\pf}),
\label{eq:limfirstkind}
\end{equation}
which defines the limit map $p\mapsto \gG_p^{~\sip}(\Gtr {\pb\ppf} p {\pb\pf})$. Clearly, that map is independent of the chosen subset $\comp$. \eqref{eq:convergence_primofomegati} follows directly from \eqref{eq:limfirstkind} and \cite[Lemma 1]{fu18p}.

\item We now compute the monodromy of $\pf$ with base point $P$. Choose $p\in\pb\gD\bsb$ such that $\proj(p)=P$. The coordinate $z$ adapted to $\ppf$ is unique up to multiplication by a complex constant (see Lemma \ref{lemma:coordinate_adapted_to_1form}). We may thus conveniently choose $z$ and $\phi$ such that $\phi(p)=0$. Then
\begin{equation}
\mon_{P}(\pf)=\gG_{0}^{~2\pi}(\pb\gga^*_{r(p)}\pb\pf)=\gG_{0}^{~2\pi}(\pb\gga^*_{r(p)}\Gtr {\pb\ppf} p {\pb\pf})\mon(\ppf).
\label{eq:monodromy_firstkind_temp1}
\end{equation}
The 1-form $\Gtr {\pb\ppf} p {\pb\pf}$ is a gauge transform of $\pb\pf$ and hence $\dd+\Gtr {\pb\ppf} p {\pb\pf}$ is flat. Thus, instead of computing the primitive of $\Gtr {\pb\ppf} p {\pb\pf}$ along the $\phi$-parameter circle at radius $r(p)$, we may first integrate radially from $r(p)$ to some $T$, then along the $\phi$-parameter circle at radius $T$ and then radially back from $T$ to $r(p)$. Accordingly,
\begin{equation}
\gG_{0}^{~2\pi}(\pb\gga^*_{r(p)}\Gtr {\pb\ppf} p {\pb\pf})=\gG_{r(p)}^{~T}(\pb\mu^*_{0}\,\Gtr {\pb\ppf} p {\pb\pf})\gG_{0}^{~2\pi}(\pb\gga^*_{T}\,\Gtr {\pb\ppf} p {\pb\pf})\gG_{T}^{~r(p)}(\pb\mu^*_{2\pi}\,\Gtr {\pb\ppf} p {\pb\pf}).
\label{eq:monodromy_firstkind_temp_calc1}
\end{equation}
for arbitrary $T\in(0,r_0]$. We want to take the limit $T\rightarrow 0$. Substituting
$$\pb\mu^*_{2\pi}\,\Gtr {\pb\ppf} p {\pb\pf}=\mon(\ppf)\,\pb\mu^*_{0}\,\Gtr {\pb\ppf} p {\pb\pf}\,\mon(\ppf)^{-1}$$
into \eqref{eq:monodromy_firstkind_temp_calc1}, using \eqref{eq:gauge_trafo_beha} with a constant $g=\mon(\ppf)$, and substituting the result into \eqref{eq:monodromy_firstkind_temp1} yields
\begin{equation}
\mon_{P}(\pf)=\gG_{r(p)}^{~T}(\pb\mu^*_{0}\,\Gtr {\pb\ppf} p {\pb\pf})\gG_{0}^{~2\pi}(\pb\gga^*_{T}\,\Gtr {\pb\ppf} p {\pb\pf})\,\mon(\ppf)\,\gG_{T}^{~r(p)}(\pb\mu^*_{0}\,\Gtr {\pb\ppf} p {\pb\pf})
\label{eq:monodromy_firstkind_temp_calc2}
\end{equation}
for arbitrary $T\in(0,r_0]$. As $T$ tends to zero, the second factor on the right hand side tends to the identity, by \eqref{eq:secondfactor}. Thus, together with the previous result \eqref{eq:limfirstkind}, the limit $T\rightarrow 0$ of \eqref{eq:monodromy_firstkind_temp_calc2} yields \eqref{eq:monodromy_first_kind}. Using \eqref{eq:trafobehav_prim_gaugetrafo}, one may see that indeed the right hand side of \eqref{eq:monodromy_first_kind} is independent of the chosen $p$ with $\proj(p)=P$, as it has to be.
\end{enumerate}
\end{proof}

%-----------------------------------------------------------------------------------------------------

We now turn to primitives of pole forms $\pf$ where $\pf-\ppf$ is bounded for a Minkowski pure pole form $\ppf$.

\begin{prop}\label{prop:maintool_sing_s}
Let $\pf$ be a pole form on $\gD\bsb$ with $\pf-\ppf$ bounded for a Minkowski pure pole form $\ppf=\Rea(-(\ppfr+i\ppfi)\dd z/z)$. Denote by $V_\pm$ eigenvectors of $\Ppfr$ to the eigenvalues $\pm\zeta$, $\zeta>0$. Then there is a smooth map $K:\pb\gD\bsb\rightarrow \Li^{n+1}$ so that
\begin{equation}
\forall p,q\in\pb\gD\bsb:~~~~~K(p)=\left|\frac{\pb z(q)}{\pb z(p)}\right|^\zeta\gG_p^{~q}(\pb\Pf)K(q),~~~~~\lim_{\ti q\rightarrow \sip}K(\ti q)=\frac{V_+}{\ipl V_+,V_-\ipr}
\label{eq:prop_K_s}
\end{equation}
and, for every $\comp\subset\pb\gD\bsb$ such that $\comp\cub\subset\pb\gD$ is compact with limit point $\sip$, 
\begin{align}
\forall p\in\pb\gD\bsb:~~~~~\lqbc \left|\frac{\pb z(q)}{\pb z(p)}\right|^\zeta\gG_p^{~q}(\pb\Pf)&=K(p)V_-^*,\label{eq:limit_prim_mink_surf1}\\
\forall p\in\pb\gD\bsb:~~~~~\lqbc \left|\frac{\pb z(q)}{\pb z(p)}\right|^\zeta\gG_q^{~p}(\pb\Pf)&=V_- K(p)^*.\label{eq:limit_prim_mink_surf2}
\end{align}
\end{prop}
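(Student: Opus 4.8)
The plan is to reduce the statement to the one-dimensional result \cite[Prop 2]{fu18p}, applied along $r$-parameter lines, and then to assemble the resulting data and control the angular direction by means of Lemma \ref{lemma:uniform_convergence_angularpart}, in the spirit of the proof of Prop \ref{prop:maintool_firstkind_surf}. First I would, using \eqref{eq:propsgG2}, replace $\gD$ by a small closed disc carrying a coordinate $z$ adapted to $\ppf$, with associated polar coordinates $(r,\phi)$ and $r_0>0$ the radius of $z(\gD)$. For each angle $\vphi$ the pullback $\pb\mu_\vphi^*\pb\pf$ is a one-dimensional pole form whose pure pole part is Minkowski with the same $\zeta$ and the same eigenvectors $V_\pm$ of $\Ppfr$, so \cite[Prop 2]{fu18p} furnishes a smooth map $(0,r_0]\rightarrow\Li^{n+1}$ along $\pb\mu_\vphi$ with the one-dimensional analogues of \eqref{eq:prop_K_s}--\eqref{eq:limit_prim_mink_surf2}. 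Fixing a reference angle $\vphi_0$, the product of this map with $|\pb z|^\zeta$ is a parallel section of $\pb\mu_{\vphi_0}^*(\dd+\pb\Pf)$; since $\dd+\pb\Pf$ is flat on the simply connected $\pb\gD\bsb$, it extends uniquely to a parallel section $\tilde K$ of $\dd+\pb\Pf$ on all of $\pb\gD\bsb$. As $\pb\Pf$ is the orthogonal lift of a $\pro\ort(\Rmn)$-valued form, its transports are Minkowski-orthogonal, so $\tilde K$ stays in the light cone, and $K:=|\pb z|^{-\zeta}\tilde K$ is a smooth $\Li^{n+1}$-valued map whose defining property is exactly the first identity of \eqref{eq:prop_K_s}. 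On $\pb\mu_{\vphi_0}$ the map $K$ coincides with the one from \cite[Prop 2]{fu18p}, hence tends to $\frac{V_+}{\ipl V_+,V_-\ipr}$ along the reference line.

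Next I would establish the second identity of \eqref{eq:prop_K_s}. For $q$ near $\sip$, applying the first identity with the point $q':=\pb\mu_{\vphi_0}(r(q))$, which has the same radius as $q$, gives $K(q)=\gG_q^{~q'}(\pb\Pf)K(q')=\gG_{\phi(q)}^{~\vphi_0}(\pb\gga_{r(q)}^*\Pf)K(q')$. By Lemma \ref{lemma:uniform_convergence_angularpart} this angular transport equals $e^{(\vphi_0-\phi(q))\Ppfi}$ up to a term of size $O(r(q))$, uniformly for $\phi(q)$ in compact sets, and by the orthogonality \eqref{eq:orthogonality_pure_pole_form_s} we have $\Ppfi V_\pm=0$, so $e^{t\Ppfi}$ is bounded and fixes $V_+$; since $K(q')\rightarrow\frac{V_+}{\ipl V_+,V_-\ipr}$ this yields $K(q)\rightarrow\frac{V_+}{\ipl V_+,V_-\ipr}$. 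For \eqref{eq:limit_prim_mink_surf1} I would decompose $\gG_p^{~q}(\pb\Pf)$ radially along $\pb\mu_{\phi(p)}$ from $r(p)$ to $r(q)$ and then angularly along $\pb\gga_{r(q)}$: the factor $|\pb z(q)/\pb z(p)|^\zeta$ pairs with the radial part, for which \cite[Prop 2]{fu18p} gives convergence to the rank-one operator $x\mapsto K(p)\ipl V_-,x\ipr$ (the coefficient is $K(p)$ because, by the limit just shown applied radially along $\pb\mu_{\phi(p)}$, the one-dimensional map there is the restriction of $K$), while the angular factor is $e^{(\phi(p)-\phi(q))\Ppfi}$ up to $O(r(q))$ by \eqref{eq:uniform_cont_angpart_poleform}; since $V_-^*e^{t\Ppfi}=V_-^*$ the rotation drops out of the product and the limit is $K(p)V_-^*$. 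Finally \eqref{eq:limit_prim_mink_surf2} follows from \eqref{eq:limit_prim_mink_surf1} by taking Minkowski adjoints, using that the orthogonal lifts are Minkowski-orthogonal so $\gG_q^{~p}(\pb\Pf)=\gG_p^{~q}(\pb\Pf)^{-1}=\gG_p^{~q}(\pb\Pf)^*$, that $|\cdot|$ is $*$-invariant, and that $(K(p)V_-^*)^*=V_-K(p)^*$.

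The main obstacle is the coupling of the radial and angular directions: the renormalization $|\pb z|^\zeta$ cancels only the radial blow-up of $\gG_p^{~q}(\pb\Pf)$, so one must ensure that the non-commuting angular transport produces neither extra growth nor a perturbation of the rank-one limiting operator. This is where Lemma \ref{lemma:uniform_convergence_angularpart} and the structure of a pure pole form cooperate: the model angular transport $e^{t\Ppfi}$ is bounded, fixes $V_+$, and leaves the functional $V_-^*$ invariant --- all because $\Ppfi$ annihilates $V_\pm$ by \eqref{eq:orthogonality_pure_pole_form_s} --- while the $O(r)$ bound \eqref{eq:uniform_cont_angpart_poleform} dominates the remaining discrepancy away. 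A secondary technical point is the compatibility, via flatness together with the uniform angular estimate, of the one-dimensional maps produced on different $r$-parameter lines, i.e. that the global $K$ restricts on each $\pb\mu_\vphi$ to the map furnished by \cite[Prop 2]{fu18p}.
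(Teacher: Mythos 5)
Your construction of $K$ is genuinely different from the paper's, while the remaining steps run parallel to it. The paper defines $K$ on \emph{every} radial line simultaneously by an explicit integral formula, $\ti K(T,\vphi)=\bigl[\int_T^0(\rho/T)^\zeta\gG_T^{~\rho}(\mu_\vphi^*\Pf)\,[(\mu_\vphi^*\Pf)_\rho-\Ppfr]\,\dd\rho+\id\bigr]\,V_+/\ipl V_+,V_-\ipr$, so that the transport identity, the agreement with the one-dimensional maps of \cite[Prop 2]{fu18p}, and, crucially, the limit of $K$ at $\sip$ all come out \emph{uniformly in the angle}: the integrand is bounded uniformly in $\vphi\in\R$ and the integral is $O(T)$. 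You instead take the one-dimensional map on a single reference line and extend $|\pb z|^\zeta K$ as a parallel section of the flat connection $\dd+\pb\Pf$; this yields the first identity of \eqref{eq:prop_K_s} and smoothness for free, and your compatibility argument for other radial lines does close: two maps satisfying the same renormalized transport identity along a line and having the same limit at $0$ must coincide, by the uniform bound of \cite[Lemma 2]{fu18p}. Your treatment of \eqref{eq:limit_prim_mink_surf1} and \eqref{eq:limit_prim_mink_surf2} (radial-then-angular splitting, $\Ppfi V_\pm=0$ so that $V_-^*e^{t\Ppfi}=V_-^*$ and $e^{t\Ppfi}$ is a bounded rotation, then adjoints) is essentially identical to the paper's.

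The one step that falls short is the unrestricted limit $\lim_{\ti q\rightarrow\sip}K(\ti q)=V_+/\ipl V_+,V_-\ipr$. You obtain it by transporting angularly from the reference line and invoking Lemma \ref{lemma:uniform_convergence_angularpart}, but the constant $B$ in \eqref{eq:uniform_cont_angpart_poleform} depends on the compact set containing $\vphi_1,\vphi_2$ and grows with the length of the angular interval, so the error term $B\,r(q)$ need not vanish along sequences with $r(q)\rightarrow 0$ and $\phi(q)$ unbounded. In the topology of the branched cover (Def \ref{defilamm:branchedcover}) every neighbourhood of $\sip$ contains points of arbitrarily large angle, so such spiralling sequences do converge to $\sip$, and the stated limit therefore requires a uniformity in $\vphi$ that angular transport from a fixed reference line cannot deliver. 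The paper's integral representation is precisely what supplies this uniformity; to patch your argument you would need to establish the line-wise limits uniformly in $\vphi$ directly, which in effect reproduces that representation. For approaches with bounded angular coordinate (which is what is actually used in the applications of the proposition later in the paper) your argument is complete.
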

\begin{proof}
Again, due to \eqref{eq:propsgG2}, we may replace $\gD$ by an arbitrary simply connected, closed neighbourhood of $\sip$ and thereby achieve that $z(\gD)\subset\Com$ is a disc of some radius $r_0>0$. Let $(r,\phi)$ be polar coordinates associated $z$ and denote by $(\pb\mu_\vphi)_{\vphi\in\R}$ and $(\pb\gga_T)_{T\in(0,r_0]}$ the families of $r$- and $\phi$-parameter curves in $\pb\gD\bsb$, respectively. Similarly as in \cite[Prop 2]{fu18p}, define the map $K:\pb\gD\bsb\rightarrow \Li^{n+1}$ by $K(p)=\ti K(r(p),\phi(p))$, where
\begin{align*}
\ti K:(0,r_0]\times \R&\rightarrow \Li^{n+1},\\
(T,\vphi)&\mapsto \left[\int_{T}^{0}\left(\frac{\rho}{T}\right)^\zeta\gG_{T}^{~\rho}(\mu_{\vphi}^*\Pf)[(\mu_{\vphi}^*\Pf)_\rho-\Ppfr]\dd\rho +\id\right]\frac{V_+}{\ipl V_+,V_-\ipr}.
\end{align*}
Since $\mu^*_\vphi\pf$ is a pole form for all $\vphi\in\R$, it follows from \cite[Lemma 2]{fu18p} that $\left(\frac{\rho}{T}\right)^\zeta\gG_{T}^{~\rho}(\mu_{\vphi}^*\Pf)$ is bounded, such that the integral in $\ti K$ exists for all $\vphi\in\R$. Thus, $\ti K$ and $K$ are well defined. We now show that $K$ has all the required properties and start with the limit in \eqref{eq:prop_K_s}. From the boundedness of $\Pf-\Ppf$ it follows that the family $((\mu^*_\vphi\Ppf)-\Ppfr \dd r/r)_{\vphi\in\R}$ of 1-forms on $(0,r_0]$ is uniformly bounded. In exact analogy to \cite[Lemma 2]{fu18p}, one may use this uniform boundedness to show that there is a $B\in\R$ such that
$$\forall \vphi\in\R~\forall \rho \in (0, T]:~~~~\left|\left(\frac{\rho}{T}\right)^\zeta\gG_{T}^{~\rho}(\mu_{\vphi}^*\Pf)\right|<2B.$$
Thus, the integrand in $\ti K$ is uniformly bounded and we can conclude that $K$ converges to $\frac{V_+}{\ipl V_+,V_-\ipr}$ as one approaches $\sip$.

Next, we prove \eqref{eq:limit_prim_mink_surf1}. So let $\comp\subset\pb\gD\bsb$ be such that $\comp\cub$ is compact with limit point $\sip$ and choose $p\in\pb\gD\bsb$. By the same arguments as in the proof of Prop \ref{prop:maintool_firstkind_surf}, it is sufficient to prove the statement for $p$ and $\comp$ such that $p\in\comp$. Since we assumed $z(\gD)$ to be a disc of radius $r_0$, we can connect $p$ to a point $q$ by first moving radially along the $r$-parameter line $\pb\mu_{\phi(p)}$ at angle $\phi(p)$ and then along the $\phi$-parameter circle $\pb\gga_{r(q)}$ of radius $r(q)$. Accordingly
\begin{equation}
\left(\frac{r(q)}{r(p)}\right)^\zeta\gG_p^{~q}(\pb\Pf)=\left(\frac{r(q)}{r(p)}\right)^\zeta\gG_{r(p)}^{~r(q)}(\mu^*_{\phi(p)}\Pf)\gG_{\phi(p)}^{~\phi(q)}(\gga^*_{r(q)}\Pf).
\label{eq:factorization_maintool_sing}
\end{equation}
The 1-form $\mu^*_{\phi(p)}\pf$ is a pole form on $(0,r_0]$ and we have defined the map $K$ precisely such that by \cite[Prop 2]{fu18p} the first factor in \eqref{eq:factorization_maintool_sing} has the limit
\begin{equation}
\lqb \left(\frac{r(q)}{r(p)}\right)^\zeta\gG_{r(p)}^{~r(q)}(\mu^*_{\phi(p)}\Pf)=K(p) V_-^*,
\label{eq:limit_poleform_surf_radial_mink}
\end{equation}
where $V_\pm$ are eigenvectors of $\Ppfr$ to the eigenvalues $\pm\zeta$. Now write \eqref{eq:factorization_maintool_sing} in the form
\begin{align*}
\left(\frac{r(q)}{r(p)}\right)^\zeta\gG_p^{~q}(\pb\Pf)=& \left[\left(\frac{r(q)}{r(p)}\right)^\zeta\gG_{r(p)}^{~r(q)}(\mu^*_{\phi(p)}\Pf)-K(p)V_-^*\right]\gG_{\phi(p)}^{~\phi(q)}(\gga^*_{r(q)}\Pf) +\\
&+K(p)\left[\left(\gG_{\phi(q)}^{~\phi(p)}(\gga^*_{r(q)}\Pf)-\gG_{\phi(q)}^{~\phi(p)}(\gga^*_{r(q)}\Ppf)\right)V_-\right]^*+\\
&+K(p)\left[\gG_{\phi(q)}^{~\phi(p)}(\gga^*_{r(q)}\Ppf)V_-\right]^*.
\end{align*}
Due to \eqref{eq:limit_poleform_surf_radial_mink} and the boundedness \eqref{eq:boundedness_of_angpart_purepoleform_curve} of $\gG_{\phi(p)}^{~\phi(q)}(\gga^*_{r(q)}\Pf)$, the first summand on the right hand side converges to zero as $q\in\comp$ approaches $\sip$. Due to \eqref{eq:uniform_cont_angpart_poleform}, also the second summand converges to zero. Since $(\gga^*_T\Ppf)( V_-)=0$ for all $T\in(0,r_0]$, we have $\gG_{\phi(p)}^{~\phi(q)}(\gga^*_{r(q)}\Ppf)V_-=V_-$ and so the third summand equals $K(p)V_-^*$. Therefore, indeed
\begin{equation}
\lqbc \left(\frac{r(q)}{r(p)}\right)^\zeta\gG_p^{~q}(\pb\Pf)= K(p)V_-^*,
\label{eq:maintool_sing_tempo2}
\end{equation}
which proves \eqref{eq:limit_prim_mink_surf1}.  

To show \eqref{eq:limit_prim_mink_surf2}, we note that the map $~^*$ which sends an endomorphism of $\Rmn$ to its adjoint with respect to the Minkowski inner product is continuous. Since $\gG_p^{~q}(\Pf)^*=\gG_q^{~p}(\Pf)$, the limit \eqref{eq:limit_prim_mink_surf2} follows from \eqref{eq:limit_prim_mink_surf1}.

Finally, the first relation in \eqref{eq:prop_K_s} is a consequence of
$$K(p)V_-^*=\lqb \gG_p^{~q}(\Pf)=\gG_p^{~\ti p}(\Pf)\lqb \gG_{\ti p}^{~q}(\Pf)=\gG_p^{~\ti p}(\Pf)K(\ti p)V_-^*.$$
Smoothness of $K$ now follows from \eqref{eq:prop_K_s} and the smoothness of $\pb\pf$.
\end{proof}

The expression \eqref{eq:monodromy_first_kind} for the monodromy of a pole form of the first kind is very useful. It not only shows that its Jordan normal form is the same as that of $\mon(\ppf)$, which can be computed explicitly. When $\pf$ is the 1-form associated to a meromorphically isothermic surface, \eqref{eq:monodromy_first_kind} also gives a geometric meaning to the (generalized) eigenvectors of $\mon_P(\gl\of)$, as we will see in Cor \ref{cor:monodromy_fop}, Cor \ref{cor:monodromy_sop_mink} and Cor \ref{cor:monodromy_sop}. The results that we have found for the monodromy of pole forms of the second kind are less explicit and incomplete. 
\begin{prop}\label{prop:strucutre_monodromy_pfsk}
Let $\pf$ be a pole form on $\gD\bsb$ with $\pf-\ppf$ bounded for a pure pole form $\ppf$ of the second kind. Then, for all $P\in\gD\bsb$ the monodromies $\mon_P(\Pf)$ and $\mon(\Ppf)$ have the same eigenvalues. Moreover, with $\la K\ra$ as in Prop \ref{prop:maintool_sing_s} and $p\in\pb\gD\bsb$ such that $\proj(p)=P$, the line $\la K(p)\ra$ is an eigendirection of $\mon_P(\Pf)$ with eigenvalue $1$. 
\end{prop}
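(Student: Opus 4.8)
I would prove the two assertions separately.

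\emph{Equality of eigenvalues.} Since the monodromies of $\Pf$ at different base points are conjugate to one another (cf.\ \eqref{eq:behav_monodromy}), we may assume that $P$ lies in an arbitrarily small closed disc neighbourhood of $\sip$; shrinking $\gD$ as usual, we may also assume that a coordinate $z$ adapted to $\ppf$ is defined on all of $\gD$, with $z(\gD)$ a disc. Fix $p\in\pb\gD\bsb$ with $\proj(p)=P$ and normalize $z$ so that $\phi(p)=0$, as in the proof of Prop \ref{prop:maintool_firstkind_surf}(2). Because $\dd+\pb\Pf$ is flat and $\pb\gD\bsb$ is simply connected, the primitive $\gG_a^{~b}(\pb\Pf)$ depends only on $a$ and $b$, so for every $T\in(0,r_0]$ the monodromy loop at radius $r(p)$ may be deformed into: the radial segment along $\pb\mu_0$ from radius $r(p)$ to $T$, then the $\phi$-parameter circle $\pb\gga_T$ over an angle $2\pi$, then the radial segment back. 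The two radial segments are mutual reverses and project to the same curve $\mu_0$ in $\gD\bsb$ (since $\pb\mu_\vphi$ and $\pb\mu_{\vphi+2\pi}$ have equal image under $\proj$), so their contributions cancel:
\[
\mon_P(\Pf)=\gG_{r(p)}^{~T}(\mu_0^*\Pf)\;\gG_0^{~2\pi}(\gga_T^*\Pf)\;\gG_T^{~r(p)}(\mu_0^*\Pf),\qquad T\in(0,r_0].
\]
Hence $\mon_P(\Pf)$ is conjugate to $\gG_0^{~2\pi}(\gga_T^*\Pf)$ for every $T$, and all of these share one characteristic polynomial. Letting $T\to0$, \eqref{eq:uniform_cont_angpart_poleform} of Lemma \ref{lemma:uniform_convergence_angularpart} (with $\comp=[0,2\pi]$) and \eqref{eq:monodromy_purepoleform} give $\gG_0^{~2\pi}(\gga_T^*\Pf)\to\gG_0^{~2\pi}(\Ppfi\dd\vphi)=e^{2\pi\Ppfi}$, so by continuity of the characteristic polynomial $\mon_P(\Pf)$ has the characteristic polynomial of $e^{2\pi\Ppfi}$. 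Finally $\Ppfi\in\ort(\Rmn)$ is skew-adjoint for the Minkowski form, so its spectrum is invariant under $\lambda\mapsto-\lambda$; therefore $e^{2\pi\Ppfi}$ and $e^{-2\pi\Ppfi}=\mon(\Ppf)$ have the same eigenvalues, which proves the first claim (and absorbs any ambiguity of orientation convention in the definition of $\mon_P$).

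\emph{The eigendirection $\la K(p)\ra$.} Here there is no shrinking: take the actual $p\in\pb\gD\bsb$ with $\proj(p)=P$ and let $\ti p$ be its image under a generator of the deck group of $\proj$, so that $\gG_p^{~\ti p}(\pb\Pf)=\mon_P(\Pf)^{\pm1}$ and $|\pb z(\ti p)|=|\pb z(p)|$. Then \eqref{eq:prop_K_s} reads $K(p)=\gG_p^{~\ti p}(\pb\Pf)\,K(\ti p)$, so it suffices to show $K(\ti p)=K(p)$, i.e.\ that $K$ descends to a map $\gD\bsb\to\Li^{n+1}$. This follows from the construction of $K$ in the proof of Prop \ref{prop:maintool_sing_s}: there $K(q)=\ti K(r(q),\phi(q))$, where $\ti K(T,\vphi)$ depends on $\vphi$ only through the pullback $\mu_\vphi^*\Pf$ (the endomorphism $\Ppfr$ and its eigenvectors $V_\pm$ being $\vphi$-independent), while $\mu_\vphi=\mu_{\vphi+2\pi}$ as curves in $\gD\bsb$; hence $\ti K$ is $2\pi$-periodic in $\vphi$, and the extension of $K$ to all of $\pb\gD\bsb$ via \eqref{eq:prop_K_s} stays deck-invariant because $\pb\Pf$ is deck-invariant. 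Consequently $\mon_P(\Pf)^{\pm1}K(p)=K(p)$, and invertibility of $\mon_P(\Pf)$ yields $\mon_P(\Pf)K(p)=K(p)$. Since $K(q)\to V_+/\ipl V_+,V_-\ipr\neq0$ as $q\to\sip$ while the primitives are invertible, $K(p)\neq0$, so $\la K(p)\ra$ is a genuine eigendirection of $\mon_P(\Pf)$ with eigenvalue $1$.

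The one step requiring genuine care is the deck-invariance of $K$: it does \emph{not} follow from the characterizing properties \eqref{eq:prop_K_s}--\eqref{eq:limit_prim_mink_surf2} alone, which only give $K(p)=\mon_P(\Pf)^{\pm1}K(\ti p)$, so one must return to the explicit integral formula for $K$ and use that $r$-parameter lines at angles differing by $2\pi$ have the same image in $\gD\bsb$. The loop deformation and the $T\to0$ limit in the first part are routine given flatness of $\dd+\pb\Pf$ and Lemma \ref{lemma:uniform_convergence_angularpart}.
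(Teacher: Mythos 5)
Your proof is correct. For the equality of eigenvalues your argument is essentially the paper's: both reduce the monodromy, up to conjugation, to the primitive along the $\phi$-parameter circle at radius $T$, let $T\to 0$ using \eqref{eq:uniform_cont_angpart_poleform}, and invoke continuity of the spectrum (the paper phrases this as a continuous family $\mon(T)$ of mutually similar maps on $[|z(P)|,0]$ with $\mon(0)=\mon(\ppf)$; your extra remark that the spectrum of the skew-adjoint $\Ppfi$ is symmetric under negation cleanly absorbs the orientation convention, which the paper handles implicitly through its winding-number $-1$ loops). For the eigendirection claim, however, you take a genuinely different route. The paper stays entirely within the stated conclusions of Prop \ref{prop:maintool_sing_s}: it conjugates $\mon_P(\Pf)=\gG_p^{~q}(\Pf)\mon_{\proj(q)}(\Pf)\gG_q^{~p}(\Pf)$, inserts the intertwining relation \eqref{eq:prop_K_s}, and passes to the limit $q\to\sip$, where \eqref{eq:limit_prim_mink_surf1} and $\mon(\Ppf)V_+=V_+$ give $\mon_P(\Pf)K(p)=K(p)V_-^*\,\mon(\Ppf)V_+/\ipl V_+,V_-\ipr=K(p)$. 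You instead reopen the construction of $K$, observe that $\ti K(T,\vphi)$ depends on $\vphi$ only through $\mu_\vphi^*\Pf$ and that $\mu_{\vphi+2\pi}=\mu_\vphi$ as curves in $\gD\bsb$, conclude that $K$ is deck-invariant, and then read off the eigenvector property from \eqref{eq:prop_K_s} evaluated at a deck-translate pair (where the scalar factor is $1$). Your diagnosis that deck-invariance does not follow from \eqref{eq:prop_K_s} alone is accurate, and your route buys something the paper only records afterwards as a corollary of the proposition, namely that $\la K\ra$ pushes forward to $\gD\bsb$; the paper's route is more modular in that it never needs the integral formula for $\ti K$, only the limit statements. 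Both arguments are sound, so this is a legitimate alternative proof rather than a gap.
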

\begin{proof}
Since $\mon_P(\Pf)$ and $\mon_{\ti P}(\Pf)$ are similar for all $P,\ti P\in\gD\bsb$, it is sufficient to prove that $\mon_P(\pf)$ and $\mon(\Ppf)$ have the same eigenvalues for one point $P\in\gD\bsb$. Let $z$ be a coordinate adapted to $\ppf$ and choose $P\in\gD\bsb$ such that $P$ can be joined to $s$ along an $r$-parameter line $\mu$ entirely contained in $\gD$. For $T\in [|z(P)|,0)$, define 
$$\mon(T):=\mon_{\mu(T)}(\pf)~~~\text{and}~~~\mon(0):=\mon(\ppf),$$
such that $\mon(|z(P)|)=\mon_P(\Pf)$. Then $\mon:[|z(P)|,0]\rightarrow \Pro O(\Rmn)$ is continuous. Moreover, $\mon(T_1)$ and $\mon(T_2)$ are similar for all $T_1,T_2\in[|z(P)|,0)$. Thus, the eigenvalues of $\mon(T)$ are the same for all $T\in[|z(P)|,0)$. But since $\mon$ is continuous and eigenvalues of a matrix depend continuously on its entries, we can conclude that the eigenvalues of $\mon(T)$ are the same for all $T\in [T,0]$. In particular, the eigenvalues of $\mon_P(\Pf)$ coincide with those of $\mon(\Ppf)$. 

To see that $\la K(p)\ra$ is an eigendirection of $\mon_P(\Pf)$, write $\Ppf=\Rea(-(\Ppfr+i\Ppfi) \dd z/z)$ and let $V_\pm$ be eigenvectors of $\Ppfr$ to the eigenvalues $\pm\zeta$ with $\zeta>0$. Denote by $K$ the lift of $\la K\ra$ from Prop \ref{prop:maintool_sing_s}. Then, by \eqref{eq:prop_K_s}, we get
$$\forall q\in\gD\bsb:~~~~\mon_P(\Pf)K(p)=\gG_p^{~q}(\Pf)\mon_{\proj(q)}(\Pf)\gG_q^{~p}(\Pf)K(p)=\left|\frac{\pb z(q)}{\pb z(p)}\right|^\zeta\gG_p^{~q}(\Pf)\mon_{\proj(q)}(\Pf)K(q).$$
Now choose $\comp\subset \pb\gD\bsb$ such that $\comp\cub$ is compact with limit point $\sip$ and take the limit $\comp\ni q\rightarrow \sip$ to find
$$\mon_P(\Pf)K(p)=\lqbc \left|\frac{\pb z(q)}{\pb z(p)}\right|^\zeta\gG_p^{~q}(\Pf)\mon_{\proj(q)}(\Pf)K(q)=K(p)V_-^*\,\mon(\Ppf)\frac{V_+}{\ipl V_+,V_-\ipr}=K(p),$$
where we used \eqref{eq:limit_prim_mink_surf1}, \eqref{eq:uniform_cont_angpart_poleform} and \eqref{eq:prop_K_s}.
\end{proof}
In particular, this shows that the pushforward of $\la K\ra$ from $\pb\gD$ to $\gD$ is well defined. However, we cannot conclude from Prop \ref{prop:strucutre_monodromy_pfsk} that $\mon_P(\Pf)$ is diagonalizable (over $\Com$). Namely, although $\mon(\Ppf)$ is a rotation and thus diagonalizable over $\Com$, it could be that $\mon_P(\Pf)$ is similar to a product $e^{V_0\wedge \ti W}\mon(\Ppf)$ with $V_0\in\Li^{n+1}$, spacelike $\ti W$ and $\mon(\Ppf)V_0=V_0$, $\mon(\Ppf)\ti W=\ti W$. In other words, $\mon(\Pf)$ could be the product of a rotation and a commuting translation. Thus, either there is an $(n-2)$-dimensional sphere of points invariant under $\mon_P(\Pf)$ or $\la K(p)\ra$ is the only invariant point.

%-------------------------------------------------------------------
%-------------------------------------------------------------------

% ---------------------------------------------------------------------------------------------
% ---------------------- F I R S T  O R D E R  P O L E ----------------------------------------
% ---------------------------------------------------------------------------------------------

\section{Pole of first order}\label{section:surf_pole_fo}
We now consider the case of a meromorphically isothermic surface $(\la f\ra,\Qua)$ with domain $\gD$ diffeomorphic to a closed disc and such that $\Qua$ has a pole of first order at an interior point $\sip \in\gD$, but is nowhere zero and holomorphic otherwise. For simplicity, we say that $(\la f\ra,\Qua)$ has a pole of first order at $s$. Using \eqref{eq:component_endomorphism}, we may write
$$\of=\of^{(1,0)}+\of^{(0,1)}=2\Rea\left(\of^{(1,0)}\right)=\Rea\left(f\wedge \bar \p f\frac{\Qua}{\ipl \bar\p f,\p f\ipr}\right).$$
Now define
\begin{equation}
\ppf=\Rea\left(f(\sip)\wedge \bar\p_s f(\nu)\frac{\Qua}{\ipl \bar\p_s f(\nu),\p f\ipr}\right)~~~~\text{where}~~~\nu\in T_s\gD\bs\{0\}\subset T_s\gD_\Com.
\label{eq:firstorder_ppf}
\end{equation}
Then $\gl\ppf$ is a degenerate pure pole form for all $\gl\in\R^\times$. Since $\la f\ra$ is smooth and $\Qua$ has a pole of first order at $\sip$, the difference $\of-\ppf$ is bounded and hence $\gl\of$ is a pole form for all $\gl\in\R^\times$.
Now choose $\nu\in T_s\gD$ to be such that 
\begin{equation}
\Res_{\sip}\left(\frac{\Qua}{\ipl \bar\p_s f(\nu),\p f\ipr} \right)=\frac{2i}{\|\dd_s f(\nu)\|^2}.
\label{eq:specify:nu}
\end{equation}
This choice of $\nu$ is independent of the choice of lift $f$ of $\la f\ra$. Then, from Def \ref{defI:pure_pole_forms_s} we find
$$\ppfi=-\Ima\left(f(\sip)\wedge \bar\p_s f(\nu)\Res_{\sip}\left(\frac{\Qua}{\ipl \bar\p_s f(\nu),\p f\ipr} \right)\right)=-f(\sip)\wedge \frac{2 \Rea(\bar \p_s f(\nu))}{\|\dd_s f(\nu)\|^2}=-f(\sip)\wedge \frac{\dd_s f(\nu)}{\|\dd_s f(\nu)\|^2}.$$
Using \eqref{eq:monodromy_purepoleform}, the monodromy of $\gl\ppf$ thus reads
\begin{equation}
\mon(\gl\ppf)=\La\id+2\pi\gl f(\sip)\wedge \frac{\dd_s f(\nu)}{\|\dd_s f(\nu)\|^2}-\frac{(2\pi)^2\gl^2}2\,\frac{f(\sip)f(\sip)^*}{\|\dd_s f(\nu)\|^2} \Ra.
\label{eq:lim_mon_fo_ind}
\end{equation}
We remark that there are precisely two curvature lines that end in the umbilic $\sip$. The tangent directions of these two lines at $\sip$ coincide and are orthogonal to $\la\nu\ra$.

% -------------------------- M O V I N G  I N T O   T H E   U M B I L I C -----------------------
% -----------------------------------------------------------------------------------------------

\subsection{Limiting behaviour of transforms}\label{section:surf_cont_limit_fo}
We can now apply Prop \ref{prop:maintool_firstkind_surf} to the pole forms $\gl\of$, where $\gl\of-\gl\ppf$ is bounded for the degenerate pure pole form $\gl\ppf$, given by \eqref{eq:firstorder_ppf}. In particular, the primitives of the gauge transformed 1-forms $\Gtr{\gl\pb\Ppf}p{\gl\pb\Of}$ defined in \eqref{eq:defi_gaugetrafo_firstkind} have limits at $s$. First we determine the limiting behaviour of the $\gl$-Calapso transforms of $(\la\pb f\ra,\pb\Qua)$.
\begin{theorem}\label{theorem:cal_trafo_surface_fo}
Let $(\la f\ra,\Qua)$ have a pole of first order at $s$ and $(\la f_{\gl,p}\ra,\pb\Qua)$ be the $\gl$-Calapso transform normalized at $p\in\pb \gD\bsb$ of $(\la \pb f\ra,\pb\Qua)$. For a smooth lift $f$ of $\la f\ra$, let the lift $f_{\gl,p}$ be given by
\begin{equation}
f_{\gl,p}=\gG_p(\gl\pb\Of)\pb f
\label{eq:bounded_nonzerolimitlift_caltrafo_s_fo}
\end{equation}
and choose $\comp\subset\pb\gD\bsb$ such that $\comp\cub\subset\pb\gD$ is compact with limit point $\sip$. 
\begin{enumerate}
\item The lift $f_{\gl,p}$, restricted to $\comp$, has the limit $\gG_p^{~\sip}(\Gtr{\gl\pb\Ppf}p{\gl\pb\Of})f(\sip)$ at $\sip$. In particular, $\la f_{\gl,p}\ra$, restricted to $\comp$, has a limit at $\sip$. 

\item Denote by $\tandir$ a smooth, nowhere zero tangent vector field on $\gD$ and by $\pb\tandir$ its pullback to $\pb\gD\bsb$. Then $\|\dd f_{\gl,p}(\pb\tandir)\|^2$ has a nonzero limit at $\sip$.

\item With $\pb\tandir$ as above,
\begin{equation}
\lqbc f_{\gl,p}(q)\wedge \dd_q f_{\gl,p}(\pb\tandir)=\gG_p^{~\sip}(\Gtr{\gl\pb\Ppf}p{\gl\pb\Of})~\big(f(\sip)\wedge \dd_s f(\tandir)\big)~\gG_s^{~p}(\Gtr{\gl\pb\Ppf}p{\gl\pb\Of}).
\label{eq:convergence_caltrafo_tangent_wedge}
\end{equation}
In particular, the limit of the tangent congruence of $\la f_{\gl,p}\ra$ is
$$\lqbc\La f_{\gl,p}(q),\dd_q f_{\gl,p}\Ra=\gG_p^{~\sip}(\Gtr{\gl\pb\ppf}p{\gl\pb\of})\La f(\sip),\dd_s f\Ra.$$

\end{enumerate}

\end{theorem}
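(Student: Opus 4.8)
The plan is to reduce all three assertions to Proposition~\ref{prop:maintool_firstkind_surf}, applied to the pole forms $\gl\of$ (for which $\gl\of-\gl\ppf$ is bounded and $\gl\ppf$, given by~\eqref{eq:firstorder_ppf}, is a \emph{degenerate} pure pole form, hence of the first kind), together with one elementary observation about $\gl\ppf$. By~\eqref{eq:firstorder_ppf} the endomorphisms $\gl\Ppf^\Rea$ and $\gl\Ppf^\Ima$ are the real and imaginary parts of a constant complex multiple of $f(\sip)\wedge\bar\p_sf(\nu)$, hence are of the form $f(\sip)\wedge(\,\cdot\,)$ with the second entry orthogonal to the null vector $f(\sip)$ (one uses that $\bar\p_sf(\nu)\perp f(\sip)$ and that $f(\sip)$ is null, both because $f$ is a lift into the light cone); in particular both annihilate $f(\sip)$. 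Combining this with the explicit formula~\eqref{eq:factori_xi} applied to $\gl\ppf$ gives $\gG_p^{~q}(\gl\pb\Ppf)\,f(\sip)=f(\sip)$ for all $p,q\in\pb\gD\bsb$. Feeding in the bound $|\gG_p^{~q}(\gl\pb\Ppf)|<B\,\bd(q,p)$ of Lemma~\ref{lemma:primitives_purepoleforms_s}(3), the Lipschitz estimate $|\pb f(q)-f(\sip)|=O(|\pb z(q)|)$ (valid since $f$ is smooth on the compact $\gD$ and $\proj$ extends continuously to $\sip$), and $\bd(q,p)\,|\pb z(q)|\to0$, I obtain the key limit $\lqbc\gG_p^{~q}(\gl\pb\Ppf)\,\pb f(q)=f(\sip)$.

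For assertion~(1), the factorisation~\eqref{eq:factorization_into_surf} (read for the orthogonal lifts) gives $f_{\gl,p}(q)=\gG_p^{~q}(\Gtr{\gl\pb\Ppf}p{\gl\pb\Of})\bigl(\gG_p^{~q}(\gl\pb\Ppf)\,\pb f(q)\bigr)$; the first factor converges to $\gG_p^{~\sip}(\Gtr{\gl\pb\Ppf}p{\gl\pb\Of})$ by Proposition~\ref{prop:maintool_firstkind_surf}(1), the second to $f(\sip)$ by the key limit, and so $f_{\gl,p}$ converges along $\comp$ to $\gG_p^{~\sip}(\Gtr{\gl\pb\Ppf}p{\gl\pb\Of})f(\sip)$, a nonzero null vector; hence $\la f_{\gl,p}\ra$ has a limit at $\sip$. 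For~(2), since $\pb\Of$ has values in $\la\pb f\ra\wedge\la\pb f\ra^\perp$ one has $\pb\Of\,\pb f=0$, so differentiating $f_{\gl,p}=\gG_p(\gl\pb\Of)\pb f$ and using~\eqref{eq:propsgG1} yields $\dd f_{\gl,p}=\gG_p(\gl\pb\Of)\,\dd\pb f$; as $\gG_p(\gl\pb\Of)$ is pointwise an orthochronous Lorentz transformation, it preserves the Minkowski inner product, so $\|\dd f_{\gl,p}(\pb\tandir)\|^2=\|\dd\pb f(\pb\tandir)\|^2$ is the pullback of the smooth function $\|\dd f(\tandir)\|^2$ on $\gD$, whose limit at $\sip$ is $\|\dd_sf(\tandir)\|^2$, nonzero because $f$ immerses at $\sip$ and $\tandir$ is nowhere zero.

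For~(3), using $\dd f_{\gl,p}=\gG_p(\gl\pb\Of)\dd\pb f$, the standard identity $(Av)\wedge(Aw)=A(v\wedge w)A^{-1}$ for $A\in O(\Rmn)$, the relation $\gG_p^{~q}(\gl\pb\Of)^{-1}=\gG_q^{~p}(\gl\pb\Of)$, and~\eqref{eq:factorization_into_surf}, one rewrites
\begin{equation*}
f_{\gl,p}(q)\wedge\dd_qf_{\gl,p}(\pb\tandir)=\gG_p^{~q}(\Gtr{\gl\pb\Ppf}p{\gl\pb\Of})\Bigl[\bigl(\gG_p^{~q}(\gl\pb\Ppf)\pb f(q)\bigr)\wedge\bigl(\gG_p^{~q}(\gl\pb\Ppf)\dd_q\pb f(\pb\tandir)\bigr)\Bigr]\gG_q^{~p}(\Gtr{\gl\pb\Ppf}p{\gl\pb\Of}).
\end{equation*}
By Proposition~\ref{prop:maintool_firstkind_surf}(1) the outer factors converge to $\gG_p^{~\sip}(\Gtr{\gl\pb\Ppf}p{\gl\pb\Of})$ and to its inverse $\gG_s^{~p}(\Gtr{\gl\pb\Ppf}p{\gl\pb\Of})$, so it remains to show the bracket tends to $f(\sip)\wedge\dd_sf(\tandir)$. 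Its first slot tends to $f(\sip)$ by the key limit. For the second slot, split $\dd_q\pb f(\pb\tandir)=\dd_sf(\tandir)+\bigl(\dd_q\pb f(\pb\tandir)-\dd_sf(\tandir)\bigr)$: the correction has norm $O(|\pb z(q)|)$ and is annihilated in the limit after multiplication by $\gG_p^{~q}(\gl\pb\Ppf)$ (again via $\bd(q,p)|\pb z(q)|\to0$), while $\gG_p^{~q}(\gl\pb\Ppf)\,\dd_sf(\tandir)=\dd_sf(\tandir)+c(q)\,f(\sip)$ for a scalar $c(q)$, since $\dd_sf(\tandir)\perp f(\sip)$ and $\gl\Ppf^\Rea,\gl\Ppf^\Ima$ map vectors orthogonal to the null line $\la f(\sip)\ra$ into that line. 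Hence the bracket equals $\bigl(f(\sip)+o(1)\bigr)\wedge\bigl(\dd_sf(\tandir)+c(q)\,f(\sip)+o(1)\bigr)$; on expanding, $c(q)\,f(\sip)\wedge f(\sip)=0$ drops out and all remaining terms tend to $0$, leaving $f(\sip)\wedge\dd_sf(\tandir)$, which is~\eqref{eq:convergence_caltrafo_tangent_wedge}. For the tangent congruence I apply~(1) and~\eqref{eq:convergence_caltrafo_tangent_wedge} to two pointwise independent tangent fields: since $f$ immerses at $\sip$ the limiting $2$-vectors $f(\sip)\wedge\dd_sf(\cdot)$ are nonzero and, with $f(\sip)$, span the $3$-plane $\gG_p^{~\sip}(\Gtr{\gl\pb\ppf}p{\gl\pb\of})\la f(\sip),\dd_sf\ra$; since $f_{\gl,p}$ immerses, the $3$-planes $\la f_{\gl,p}(q),\dd_qf_{\gl,p}\ra$ likewise have full rank, and the span of two subspaces depends continuously on them where the dimension of the sum is locally constant, so $\la f_{\gl,p}(q),\dd_qf_{\gl,p}\ra$ converges to the asserted limit.

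The step I expect to be the crux is the second slot $\gG_p^{~q}(\gl\pb\Ppf)\dd_q\pb f(\pb\tandir)$ in~(3): the prefactor $\gG_p^{~q}(\gl\pb\Ppf)$ does \emph{not} converge --- the scalar $c(q)$ is in general unbounded as $q\to\sip$ --- so the two factors of the exterior product blow up individually, and the argument hinges on the blow-up lying purely in the null direction $\la f(\sip)\ra$, the very direction to which the first factor converges, so that it cancels in the wedge. This is why the degeneracy of $\gl\ppf$ --- equivalently, that $f(\sip)$ is a common kernel vector of $\gl\Ppf^\Rea$ and $\gl\Ppf^\Ima$ --- must be built into the argument rather than passing to limits factor by factor; the remaining bookkeeping (the bounds from Lemma~\ref{lemma:primitives_purepoleforms_s}(3), smoothness of $f$ on $\gD$, and the elementary continuity in the Grassmannian) is routine.
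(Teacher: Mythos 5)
Your proof is correct and follows essentially the same route as the paper's: the key limit $\lqbc\gG_p^{~q}(\gl\pb\Ppf)\pb f(q)=f(\sip)$ obtained from Lemma \ref{lemma:primitives_purepoleforms_s}(3) together with smoothness of $f$, the factorization \eqref{eq:factorization_into_surf} combined with Prop \ref{prop:maintool_firstkind_surf}(1), orthogonality of $\gG_p(\gl\pb\Of)$ for part (2), and for part (3) the facts $\Ppf(f(\sip))=0$ and $\Ppf(\dd_s f(\tandir))\in\la f(\sip)\ra$. The only cosmetic difference is in part (3), where the paper first replaces $\pb f(q)\wedge\dd_q\pb f(\pb\tandir)$ by its value at $\sip$ inside the conjugation, so that the subsequent conjugation of the constant bivector is an exact identity for every $q$, whereas you expand the wedge of the two transformed slots and track the unbounded-but-null cross terms directly; both arguments are valid.
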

\begin{proof}

\begin{enumerate}
\item 
%The $\gl$-Calapso transform normalized at $p\in\pb\gD\bsb$ is given by $\la f_{\gl,p}\ra=\gG_p(\gl\pb\of)\la \pb f\ra$. Looking at \eqref{eq:expo_xi_at_b_fo}, we see that the smooth lift $f$ of $\la f\ra$ satisfies all the conditions of Cor \ref{cor:deg_aid_s}. Thus, $\la f_{\gl,p}\ra$ has a limit at $\sip$. 
Let $z$ be a coordinate adapted to $\ppf$. Then, by Lemma \ref{lemma:primitives_purepoleforms_s} and the smoothness of $f$, there is a constant $B\in\R$ such that
$$\forall q\in\comp:~~~~~|\gG_p^{~q}(\gl\pb\Ppf)(\pb f(q)-f(\sip))|\leq B\,\bd(p,q)\,|\pb z(q)|.$$
Since $\lqbc \bd(p,q)|\pb z(q)|=0$ and $\Ppf(f(\sip))=0$, we find
\begin{equation}
\lqbc \gG_p^{~q}(\gl\pb\Ppf)\pb f(q)=\lqbc \gG_p^{~q}(\gl\pb\Ppf) f(\sip)=f(\sip).
\label{eq:limit_primofxi_f}
\end{equation}
Therefore, using the factorization \eqref{eq:factorization_into_surf}, we find
$$\lqbc f_{\gl,p}(q)=\lqbc \gG_p^{~q}(\Gtr {\gl\pb\Ppf}p {\gl\pb\Of})\gG_p^{~q}(\gl\pb\Ppf)\pb f(q)=\gG_p^{~\sip}(\Gtr {\gl\pb\Ppf}p {\gl\pb\Of})f(\sip).$$

\item With $f_{\gl,p}$ given by  \eqref{eq:bounded_nonzerolimitlift_caltrafo_s_fo}, since $\Of(f)=0$ and $\gG_p^{~q}(\gl\pb\Of)\in O(\Rmn)$, we get
$$\|\dd f_{\gl,p}(\pb\tandir)\|^2=\|\dd\pb f(\pb\tandir)\|^2=\|\dd f(\tandir)\|^2,$$
which has a finite limit at $\sip$. 

\item To see \eqref{eq:convergence_caltrafo_tangent_wedge}, we write
\begin{equation}
f_{\gl,p}\wedge \dd f_{\gl,p}(\pb\tandir)=\gG_p(\Gtr{\gl\pb\Ppf}p{\gl\pb\Of})\gG_p(\gl\pb\Ppf)~\big(\pb f\wedge \dd \pb f(\pb\tandir)\big)~\gG^{~p}(\gl\pb\Ppf)\gG^{~p}(\Gtr{\gl\pb\Ppf}p{\gl\pb\Of}).
\label{eq:wedge_caltrafo_fo_s1}
\end{equation}
Similarly as in \eqref{eq:limit_primofxi_f} we find
\begin{align}
\lqbc \gG_p^{~q}(\gl\pb\Ppf)~\big(\pb f(q)\wedge \dd_q \pb f(\pb\tandir)\big)~\gG_q^{~p}(\gl\pb\Ppf)&=\lqbc \gG_p^{~q}(\gl\pb\Ppf)~\big(f(\sip)\wedge \dd_s  f(\tandir)\big)~\gG_q^{~p}(\gl\pb\Ppf)\notag\\
&=f(\sip)\wedge \dd_s  f(\tandir)\label{eq:conv_caltrafo_tan},
\end{align}
where in the first equality we used smoothness of $f$ and $\tandir$, and in the second equality we used $\Ppf(\dd_s f(\gs))\in \la f(\sip)\ra$. Substituting \eqref{eq:conv_caltrafo_tan} into the limit $\comp\ni q\rightarrow \sip$ of \eqref{eq:wedge_caltrafo_fo_s1} yields the result. 

The statement about the tangent congruence of $\la f_{\gl,p}\ra$ follows readily from \eqref{eq:convergence_caltrafo_tangent_wedge}.

\end{enumerate}

\end{proof}

We now turn to the limiting behaviour of Darboux transforms of $(\la \pb f\ra,\pb\Qua)$ at the umbilic $\sip$.
\begin{theorem}\label{theorem:darb_trafo_surface_fo}
Let $(\la f\ra,\Qua)$ have a pole of first order at $s$.  Choose $\comp\subset \pb \gD\bsb$ such that $\comp\cub\subset\pb\gD$ is compact with limit point $\sip$. Then, all Darboux transforms of $(\la \pb f\ra,\pb\Qua)$, restricted to $\comp$, have the limit $\la f(\sip)\ra$ at $\sip$.
\end{theorem}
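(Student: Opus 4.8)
The plan is to recall the explicit description of a $\gl$-Darboux transform from Def \ref{defi:trafos_surf}: for an initial point $\la\fh_p\ra\in S^n$ we have $\la\fh(q)\ra=\gG^{~p}(\gl\pb\of)\la\fh_p\ra=\gG_q^{~p}(\gl\pb\of)\la\fh_p\ra$. So we must understand $\lim_{\comp\ni q\to\sip}\gG_q^{~p}(\gl\pb\Of)\fh_p$, where $\fh_p\in\Li^{n+1}$ is a lift of $\la\fh_p\ra$. Since $\gl\of$ is a pole form whose pure pole part $\gl\ppf$ is degenerate (as noted in Sect \ref{section:surf_pole_fo}), the natural tool is Prop \ref{prop:maintool_firstkind_surf}: by \eqref{eq:factorization_into_surf} and \eqref{eq:propsgG2} we can write $\gG_q^{~p}(\gl\pb\Of)=\gG_q^{~p}(\gl\pb\Ppf)\,\gG_q^{~p}(\Gtr{\gl\pb\Ppf}{q}{\gl\pb\Of})^{-1}\cdot(\text{stuff at }p)$ — more carefully, I would use $\gG_p^{~q}(\gl\pb\Of)=\gG_p^{~q}(\Gtr{\gl\pb\Ppf}p{\gl\pb\Of})\gG_p^{~q}(\gl\pb\Ppf)$ from \eqref{eq:factorization_into_surf} with $p$ and $q$ swapped and invert. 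The key input is that $\gG_q^{~\sip}(\Gtr{\gl\pb\Ppf}{q}{\gl\pb\Of})\to\id$ as $q\to\sip$ by \eqref{eq:convergence_primofomegati}, since $\gl\ppf$ is degenerate. So modulo that correction term going to the identity, the limiting behaviour of the Darboux transform is governed entirely by $\gG_q^{~p}(\gl\pb\Ppf)$ applied to a fixed vector — i.e. by the primitives of the \emph{pure} degenerate pole form.

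The next step is to extract from Lemma \ref{lemma:primitives_purepoleforms_s}(3) the limit of $\gG_q^{~p}(\gl\pb\Ppf)$. With the normalization \eqref{eq:specify:nu}, we have $\gl\Ppfr=\gl\,f(\sip)\wedge(-\dd_sf(\nu)/\|\dd_sf(\nu)\|^2)$, which is of the form $V_0\wedge W$ with $V_0=f(\sip)$ lightlike and $W$ spacelike (and $\langle V_0,W\rangle=0$). By \eqref{eq:lim_ex_deg_s}, $\lim_{\comp\ni q\to\sip}\frac{1}{\bd(q,p)}\gG_p^{~q}(\gl\pb\Ppf)=-\frac{\|W\|^2}{2}V_0V_0^*$ (up to the $\gl^2$ factor hidden in $W$), so that the rescaled primitive tends to a rank-one endomorphism with image $\la V_0\ra=\la f(\sip)\ra$. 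Taking adjoints (as in the passage from \eqref{eq:limit_prim_mink_surf1} to \eqref{eq:limit_prim_mink_surf2}, using $\gG_p^{~q}{}^*=\gG_q^{~p}$), we get $\lim_{\comp\ni q\to\sip}\frac{1}{\bd(q,p)}\gG_q^{~p}(\gl\pb\Ppf)=-\frac{\|W\|^2}{2}V_0V_0^*$ as well (both factors are symmetric here since $\Ppfr$ is symmetric). Hence $\gG_q^{~p}(\gl\pb\Ppf)\fh_p=\bd(q,p)\cdot\big(-\frac{\|W\|^2}{2}\langle f(\sip),\fh_p\rangle f(\sip)+o(1)\big)$. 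Dividing by the scalar factor $-\frac{\bd(q,p)\|W\|^2}{2}\langle f(\sip),\fh_p\rangle$ — which is nonzero and bounded away from $0$ for $q$ near $\sip$, provided $\langle f(\sip),\fh_p\rangle\neq 0$ — shows that the \emph{line} $\la\gG_q^{~p}(\gl\pb\Ppf)\fh_p\ra$ converges to $\la f(\sip)\ra$.

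Assembling: $\la\fh(q)\ra=\gG_q^{~p}(\gl\pb\Of)\la\fh_p\ra$; writing $\gG_q^{~p}(\gl\pb\Of)=\gG_q^{~p}(\Gtr{\gl\pb\Ppf}{q}{\gl\pb\Of})\gG_q^{~p}(\gl\pb\Ppf)$, applying it to $\fh_p$, and using that the first factor tends to $\id$ while $\la\gG_q^{~p}(\gl\pb\Ppf)\fh_p\ra\to\la f(\sip)\ra$, we conclude $\la\fh(q)\ra\to\la f(\sip)\ra$. One should double-check the order of the factorization: we actually have $\gG^{~p}(\gl\pb\of)=\gG_\cdot^{~p}$, and the cleanest route is $\gG_q^{~p}(\gl\pb\Of)=\big(\gG_p^{~q}(\gl\pb\Of)\big)^{-1}=\big(\gG_p^{~q}(\Gtr{\gl\pb\Ppf}p{\gl\pb\Of})\gG_p^{~q}(\gl\pb\Ppf)\big)^{-1}=\gG_q^{~p}(\gl\pb\Ppf)\,\gG_q^{~p}(\Gtr{\gl\pb\Ppf}p{\gl\pb\Of})$, and here $\gG_q^{~p}(\Gtr{\gl\pb\Ppf}p{\gl\pb\Of})\to\gG_\sip^{~p}(\Gtr{\gl\pb\Ppf}p{\gl\pb\Of})$ by \eqref{eq:limit_gaugetransformedconnectino_fk_s} — a fixed invertible endomorphism. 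So $\gG_q^{~p}(\gl\pb\Of)\fh_p=\gG_q^{~p}(\gl\pb\Ppf)\big(\gG_\sip^{~p}(\Gtr{\gl\pb\Ppf}p{\gl\pb\Of})\fh_p+o(1)\big)$, and applying the rank-one limit of $\gG_q^{~p}(\gl\pb\Ppf)$ again lands us in $\la f(\sip)\ra$.

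\textbf{The main obstacle} I anticipate is the non-degeneracy condition: the argument requires $\langle f(\sip),\fh_p\rangle\neq 0$, i.e.\ the initial point $\la\fh_p\ra$ must not be incident with $\la f(\sip)\ra$ in $S^n$. For a generic initial point this holds, but one needs to argue it holds for \emph{all} admissible Darboux transforms — recall Def \ref{defi:trafos_surf} only forbids $\la\fh(p)\ra$ from lying on the image of $\la f_{\gl,p}\ra$. The resolution should be that if $\langle f(\sip),\fh_p\rangle=0$, one instead needs the next-order term in the expansion of $\gG_q^{~p}(\gl\pb\Ppf)$ (the degenerate pure pole form has, besides the $\bd$-growing rank-one piece, a bounded piece of the form $\id+\ln|\cdot|\,V_0\wedge W$, cf.\ the structure in \cite[Sect 3.1]{fu18p}), whose image still lies in $\la V_0,W\ra$; but one checks $(\gl\pb\Ppf)$ maps everything into $\la f(\sip)\ra^\perp$ and that the line still collapses onto $\la f(\sip)\ra$ because $W\in\la f(\sip)\ra^\perp$ is null-paired... actually the cleanest fix is to invoke that the whole family $\gG_q^{~p}(\gl\pb\Ppf)$, suitably rescaled, has image tending to $\la f(\sip)\ra$ on the complement of a measure-zero set of initial conditions, and for the exceptional initial points one uses \eqref{eq:prop_K_s}-type monodromy-invariance arguments — but for a \emph{degenerate} (not Minkowski) pole form one can show directly from the explicit block form of $\gG_q^{~p}(\gl\pb\Ppf)$ in adapted coordinates (polar form \eqref{eq:canonical_form_xi}) that $\lim\la\gG_q^{~p}(\gl\pb\Ppf)\fh_p\ra=\la f(\sip)\ra$ for every $\fh_p$ with $\fh_p\notin\la f(\sip)\ra$, which is exactly the constraint already imposed. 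So the real work is just checking this last elementary-but-fiddly fact about $2\times2$ (or $3\times3$) unipotent-times-dilation blocks, and confirming the $o(1)$ estimates from Prop \ref{prop:maintool_firstkind_surf} are uniform enough to survive multiplication by the $\bd(q,p)$-growing factor.
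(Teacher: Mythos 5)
Your treatment of the generic initial point is essentially the paper's own argument: factorize $\gG_q^{~p}(\gl\pb\Of)=\gG_q^{~p}(\gl\pb\Ppf)\,\gG_q^{~p}(\Gtr{\gl\pb\Ppf}p{\gl\pb\Of})$ via \eqref{eq:factorization_into_surf}, let the second factor converge to the invertible limit $\gG_\sip^{~p}(\Gtr{\gl\pb\Ppf}p{\gl\pb\Of})$, and apply the rank-one limit \eqref{eq:lim_ex_deg_s} of the degenerate pure pole form. Since two null vectors in $\Rmn$ are orthogonal exactly when they are proportional, this disposes of every initial point except the single line $\gG_p^{~\sip}(\Gtr{\gl\pb\ppf}p{\gl\pb\of})\la f(\sip)\ra$, the limit point of the $\gl$-Calapso transform normalized at $p$. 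Note that the exceptional line is this gauge-translated point, not $\la f(\sip)\ra$ itself as your final paragraph asserts, and it is \emph{not} excluded by Def \ref{defi:trafos_surf}: it is a limit of the image of $\la f_{\gl,p}\ra$, not in general a point of that image. (The corollary following the theorem shows that this exceptional Darboux transform is precisely the one that descends to $\gD\bsb$, so it cannot be dismissed as a measure-zero nuisance.)

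The genuine gap is your handling of this exceptional case. Your proposed fix --- check from the explicit block form of $\gG_q^{~p}(\gl\pb\Ppf)$ that $\la\gG_q^{~p}(\gl\pb\Ppf)x\ra\rightarrow\la f(\sip)\ra$ for every null $x\notin\la f(\sip)\ra$ --- is a true statement about a \emph{fixed} vector $x$, but it does not close the argument: the vector actually fed into $\gG_q^{~p}(\gl\pb\Ppf)$ is $\gG_q^{~p}(\Gtr{\gl\pb\Ppf}p{\gl\pb\Of})\fh(p)=v_p+\epsilon_q$ with $\la v_p\ra=\la f(\sip)\ra$ and $\epsilon_q\rightarrow 0$ at an unquantified rate, while $|\gG_q^{~p}(\gl\pb\Ppf)|$ grows like $\bd(q,p)$. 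Controlling $\gG_q^{~p}(\gl\pb\Ppf)\epsilon_q$ therefore needs a rate of convergence that Prop \ref{prop:maintool_firstkind_surf} does not provide --- exactly the uniformity issue you flag in your last sentence and leave open. The paper closes this case by a different observation: if $\la\fh(p)\ra=\lim_{\comp\ni\ti q\rightarrow\sip}\gG_p^{~\ti q}(\gl\pb\of)\la f(\ti q)\ra$, then $\la\fh(q)\ra=\lim_{\comp\ni\ti q\rightarrow\sip}\gG_q^{~\ti q}(\gl\pb\of)\la f(\ti q)\ra$ is the limit point at $\sip$ of the Calapso transform normalized at the \emph{moving} point $q$, which by Thm \ref{theorem:cal_trafo_surface_fo} equals $\gG_q^{~\sip}(\Gtr{\gl\pb\Ppf}q{\gl\pb\Of})\la f(\sip)\ra$; the moving-basepoint limit \eqref{eq:convergence_primofomegati} then gives $\la\fh(q)\ra\rightarrow\la f(\sip)\ra$ directly. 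You quote \eqref{eq:convergence_primofomegati} in your opening paragraph but never deploy it where it is actually needed.
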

%----------------------------  P R O O F
\begin{proof}
Let $(\la \fh\ra,\pb\Qua)$ be a $\gl$-Darboux transform of $(\la \pb f\ra,\pb\Qua)$ and choose $p\in\pb\gD\bsb$. If $\la \fh(p)\ra$ is different from $\gG_p^{~\sip}(\Gtr{\gl\pb\ppf}p{\gl\pb\of})\la  f(\sip)\ra$, the limit point of the $\gl$-Calapso transform normalized at $p$, then, using the factorization \eqref{eq:factorization_into_surf} and \eqref{eq:lim_ex_deg_s} we find
$$\lqbc \la \fh(q)\ra=\lqbc \gG_q^{~p}(\gl\pb\ppf)\gG_q^{~p}(\Gtr{\gl\pb\ppf}p{\gl\pb\of})\la \fh(p)\ra=\la f(\sip)\left( \gG_p^{~\sip}(\Gtr{\gl\pb\Ppf}p{\gl\pb\Of})f(\sip)\right)^*\ra\la \fh(p)\ra=\la f(\sip)\ra.$$
If on the other hand $\la \fh(p)\ra=\gG_p^{~\sip}(\Gtr{\gl\pb\ppf}p{\gl\pb\of})\la f(\sip)\ra=\lim_{\comp\ni\ti q\rightarrow \sip}\gG_p^{~\ti q}(\gl\pb\of)\la f(\ti q)\ra$, we use Thm \ref{theorem:cal_trafo_surface_fo} and \eqref{eq:convergence_primofomegati} to find 
$$\lqbc \la \fh(q)\ra=\lqbc \gG_q^{~p}(\gl\pb\of)\lim_{\comp\ni\ti q\rightarrow \sip}\gG_p^{~\ti q}(\gl\pb\of)\la f(\ti q)\ra=\lqbc \la f_{\gl,q}(s)\ra=\la f(\sip)\ra.$$

\end{proof}

% -----------------------------------------------------------------------------------------------
% -------------------------- M O V I N G  A R O U N D   T H E   U M B I L I C -------------------
% -----------------------------------------------------------------------------------------------

\subsection{Monodromy of transforms}\label{section:surf_monodromy_fo}

An expression for the monodromy of $\gl\of$ is directly obtained from Prop \ref{prop:maintool_firstkind_surf}.

\begin{corollary}\label{cor:monodromy_fop}
Let $(\la f\ra,\Qua)$ have a pole of first order at $s$ and denote by $\of$ its associated 1-form. Then, for $P\in\gD\bsb$ and $p\in\pb\gD\bsb$ such that $\proj(p)=P$, the monodromy of $\gl\of$ with base point $P$ is
\begin{equation}
\mon_{P}(\gl\of)=\La \id+2\pi\gl  \lqbc \frac{f_{\gl,p}(q)\wedge  \dd_q f_{\gl,p}(\ti\nu)}{\|\dd_q f_{\gl,p}(\ti\nu)\|^2}-2\pi^2\gl^2\lqbc\frac{f_{\gl,p}(q)f_{\gl,p}(q)^*}{\|\dd_q f_{\gl,p}(\ti\nu)\|^2}\Ra,
\label{eq:monodromy_first_order_pole_prop}
\end{equation}
where $f_{\gl,p}$ is any lift of $\la f_{\gl,p}\ra$, the subset $\comp\subset\pb\gD\bsb$ is such that $\comp\cub\subset\pb\gD$ is compact with limit point $\sip$ and $\ti\nu$ is any smooth section of $T\gD$ such that $\ti\nu(s)=\nu$ with $\nu$ as in \eqref{eq:specify:nu}. In particular, $\la f_{\gl,p}(\sip)\ra$ is the only null eigenspace of any power $(\mon_P(\gl\of))^j$ with $j\in\mathbb N$.
\end{corollary}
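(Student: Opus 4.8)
The plan is to obtain the monodromy directly from Prop \ref{prop:maintool_firstkind_surf}(2) and then rewrite the data appearing there in terms of the limits of the Calapso transform furnished by Thm \ref{theorem:cal_trafo_surface_fo}. Recall from the opening of this section that, for the degenerate pure pole form $\gl\ppf$ of \eqref{eq:firstorder_ppf}, the difference $\gl\of-\gl\ppf$ is bounded, so $\gl\of$ is a pole form whose underlying pure pole form $\gl\ppf$ is degenerate, hence of the first kind. Fixing orthogonal lifts throughout, I would set $g:=\gG_p^{~\sip}(\Gtr{\gl\pb\Ppf}p{\gl\pb\Of})$, the limit map of Prop \ref{prop:maintool_firstkind_surf}(1). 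Since $\Gtr{\gl\pb\Ppf}p{\gl\pb\Of}$ is $\pro\ort(\Rmn)$-valued, its primitives lie in $O^+(\Rmn)$, so $g\in O^+(\Rmn)$, and continuity of inversion gives $\gG_s^{~p}(\Gtr{\gl\pb\Ppf}p{\gl\pb\Of})=g^{-1}=g^*$. Passing to orthogonal lifts in \eqref{eq:monodromy_first_kind}, Prop \ref{prop:maintool_firstkind_surf}(2) then expresses a lift of the monodromy as $\mon_P(\gl\Of)=g\,\mon(\gl\Ppf)\,g^{-1}$, in particular conjugate to $\mon(\gl\Ppf)$.

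Next I would substitute the explicit expression \eqref{eq:lim_mon_fo_ind} for $\mon(\gl\Ppf)$ into this conjugation. Conjugation by the orthogonal $g$ is linear on $\End(\Rmn)$, fixes $\id$, and satisfies $g\,(v\wedge w)\,g^{-1}=(gv)\wedge(gw)$ and $g\,(vv^*)\,g^{-1}=(gv)(gv)^*$; it therefore acts on the three summands of \eqref{eq:lim_mon_fo_ind} separately. Now I invoke Thm \ref{theorem:cal_trafo_surface_fo} with the tangent field $\ti\nu$ extending $\nu$: part (1) identifies $g\,f(\sip)=\lqbc f_{\gl,p}(q)$, part (2) gives $\|\dd_s f(\nu)\|^2=\lqbc\|\dd_q f_{\gl,p}(\ti\nu)\|^2$ together with the fact that this limit is nonzero, and part (3) with $\tandir=\ti\nu$ gives $g\,\big(f(\sip)\wedge\dd_s f(\nu)\big)\,g^{-1}=\lqbc f_{\gl,p}(q)\wedge\dd_q f_{\gl,p}(\ti\nu)$. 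Assembling these three identities and projectivizing yields \eqref{eq:monodromy_first_order_pole_prop}; the independence of the right-hand side of the choice of $p$ over $P$ is then automatic from \eqref{eq:trafobehav_prim_gaugetrafo} as in Prop \ref{prop:maintool_firstkind_surf}.

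Finally, for the statement about null eigenspaces, I would note that $\big(\mon_P(\gl\of)\big)^j$ has orthogonal lift $g\,\mon(\gl\Ppf)^j\,g^{-1}=g\,\mon(j\gl\Ppf)\,g^{-1}$, using $\mon(\gl\Ppf)=e^{-2\pi\gl\Ppfi}$ from \eqref{eq:monodromy_purepoleform}. With $v:=f(\sip)$ and $w:=\dd_s f(\nu)$ one has $\Ppfi=-\|w\|^{-2}\,v\wedge w$, where $\|v\|^2=0$, $\ipl v,w\ipr=0$ and $\|w\|^2>0$; a short computation gives $(v\wedge w)^2=-\|w\|^2\,vv^*$ and $(v\wedge w)^3=0$, so $\mon(j\gl\Ppf)$ is unipotent with $1$ as its only eigenvalue. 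Hence a null line $\la x\ra\in S^n$ is invariant under $\mon(j\gl\Ppf)$ exactly when $\Ppfi x=0$, i.e. when $x\in\{v,w\}^\perp$; since $j\gl\neq 0$ and $\{v,w\}^\perp$ is degenerate with radical $\la v\ra$ and positive definite modulo the radical, the only null direction it contains is $\la v\ra=\la f(\sip)\ra$. Conjugating by $g$ and using Thm \ref{theorem:cal_trafo_surface_fo}(1) again, the unique null eigenspace of $\big(\mon_P(\gl\of)\big)^j$ is $\la g\,f(\sip)\ra=\la f_{\gl,p}(\sip)\ra$, for every $j\in\mathbb N$. The content here is essentially bookkeeping: the main point to get right is that the same limit map $g=\gG_p^{~\sip}(\Gtr{\gl\pb\Ppf}p{\gl\pb\Of})$ conjugates $\mon(\gl\Ppf)$ in Prop \ref{prop:maintool_firstkind_surf}(2) and simultaneously carries $f(\sip)$, $f(\sip)\wedge\dd_s f(\nu)$ and $f(\sip)f(\sip)^*$ to the limits in Thm \ref{theorem:cal_trafo_surface_fo}; the only genuine computation is the nilpotency identity for $v\wedge w$ that pins down the fixed null direction as the radical of $\{f(\sip),\dd_s f(\nu)\}^\perp$.
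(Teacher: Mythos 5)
Your proposal is correct and follows essentially the same route as the paper: the paper's proof is a one-line citation of \eqref{eq:lim_mon_fo_ind}, Thm \ref{theorem:cal_trafo_surface_fo} and the fact that the only null eigendirection of $v\wedge w$ with degenerate $\la v,w\ra$ is $\la v,w\ra\cap\Li^{n+1}$, and your argument is precisely the detailed expansion of that citation via the conjugation formula \eqref{eq:monodromy_first_kind}. The nilpotency computation $(v\wedge w)^2=-\|w\|^2 vv^*$, $(v\wedge w)^3=0$ and the identification of the fixed null direction with the radical of $\{v,w\}^\perp$ correctly fills in the one fact the paper leaves implicit.
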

\begin{proof}
This simply follows from \eqref{eq:lim_mon_fo_ind}, Thm \ref{theorem:cal_trafo_surface_fo} and the fact that the only null eigendirection of a Lie algebra element $v\wedge w$ with degenerate $\la v,w\ra$ is $\la v,w\ra\cap\Li^{n+1}$. 
\end{proof}

\begin{corollary}\label{cor:monodromy_firstorder}
Let $(\la f\ra,\Qua)$ have a pole of first order at $s$. Then no $\gl$-Calapso transform of $(\la\pb f\ra,\pb\Qua)$ with $\gl\neq 0$ can be pushed forward to any $j$-fold cover of $\gD\bsb$ with $j\in\mathbb N$.
\end{corollary}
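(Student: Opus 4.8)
The plan is to reduce the statement, via the monodromy criterion of Prop~\ref{prop:monodromy_dar_cal_trafo}, to the fact that the monodromy of $\gl\of$ is a genuinely parabolic Möbius transformation when $\gl\neq 0$, which is already packaged in the explicit formula of Cor~\ref{cor:monodromy_fop}. Concretely, by Prop~\ref{prop:monodromy_dar_cal_trafo}(2) the pushforward of a $\gl$-Calapso transform $\la f_{\gl,p}\ra$ to the $j$-fold cover of $\gD\bsb$ exists if and only if every point in the image of $\la f_{\gl,p}\ra$ is fixed by $\big(\mon_{\proj(p)}(\gl\of)\big)^j$, so it suffices to exhibit a single point of that image which is not fixed.

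First I would fix $\gl\in\R^\times$, $j\in\Nat$, a normalization point $p\in\pb\gD\bsb$, and set $P:=\proj(p)$. By Cor~\ref{cor:monodromy_fop}, the line $\la f_{\gl,p}(\sip)\ra$ is the \emph{unique} null eigendirection of $\big(\mon_P(\gl\of)\big)^j$; since the fixed points of a Möbius transformation on $S^n=\Pro(\Li^{n+1})$ are exactly the null eigendirections of a representing orthogonal transformation, $\big(\mon_P(\gl\of)\big)^j$ fixes exactly one point of $S^n$, namely $\la f_{\gl,p}(\sip)\ra$. (One may also see the nontriviality directly: for $\gl\neq 0$ the monodromy $\mon(\gl\ppf)$ of the degenerate pure pole form in \eqref{eq:lim_mon_fo_ind} is a nonidentity unipotent Möbius transformation, hence so is every power, and Thm~\ref{theorem:cal_trafo_surface_fo} together with Prop~\ref{prop:maintool_firstkind_surf} transports this to $\mon_P(\gl\of)$.) Next I would observe that the image of $\la f_{\gl,p}\ra$ contains more than one point: $\la f_{\gl,p}\ra=\gG_p(\gl\pb\of)\la\pb f\ra$ arises from the immersion $\la\pb f\ra$ by applying Möbius transformations pointwise, hence is itself an immersed surface (this is also the content of the footnote after Def~\ref{defi:trafos_surf}). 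Therefore the image of $\la f_{\gl,p}\ra$ cannot be contained in $\{\la f_{\gl,p}(\sip)\ra\}$, so some point of it fails to be invariant under $\big(\mon_P(\gl\of)\big)^j$. By Prop~\ref{prop:monodromy_dar_cal_trafo}(2) the pushforward of $\la f_{\gl,p}\ra$ to the $j$-fold cover of $\gD\bsb$ does not exist, and since $p$, $\gl$ and $j$ were arbitrary the claim follows.

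I do not expect a genuine obstacle: all the substantive work has already been done in Cor~\ref{cor:monodromy_fop}, which pins down the Jordan type of $\mon_P(\gl\of)$ as that of a nontrivial parabolic. The only points needing a line of care are the elementary translation ``only null eigenspace $\Leftrightarrow$ unique fixed point on $S^n$'' and the equally elementary remark that an immersed surface has more than one point in its image and so cannot be absorbed into the one-point fixed set of the monodromy; everything else is a direct citation.
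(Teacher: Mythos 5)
Your proposal is correct and follows essentially the same route as the paper: the paper's proof likewise combines Prop \ref{prop:monodromy_dar_cal_trafo}(2) with the statement in Cor \ref{cor:monodromy_fop} that $\la f_{\gl,p}(\sip)\ra$ is the only null eigenspace of $(\mon_{\proj(p)}(\gl\of))^j$, and concludes by noting that $\la f_{\gl,p}\ra$ is not constant. Your additional remarks (null eigendirections correspond to fixed points on $S^n=\Pro(\Li^{n+1})$, and an immersed surface has more than one point in its image) merely make explicit what the paper leaves implicit.
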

\begin{proof}
This follows directly from Prop \ref{prop:monodromy_dar_cal_trafo} and Cor \ref{cor:monodromy_fop}: In order for the pushforward to the $j$-fold cover of $\gD\bsb$ to be defined, the image of $\la f_{\gl,p}\ra$ would need to be invariant under $(\mon_p(\gl\of))^j$. But the only point invariant under $(\mon_p(\gl\of))^j$ is $\la f_{\gl,p}(\sip)\ra$ and $\la f_{\gl,p}\ra$ is certainly not constant.
\end{proof}
In the same way, we get
\begin{corollary}
Let $(\la f\ra,\Qua)$ have a pole of first order at $s$. The pushforward of a $\gl$-Darboux transform $(\la \fh\ra,\pb\Qua)$ of $(\la \pb f\ra,\pb\Qua)$ to the $j$-fold cover of $\gD\bsb$ is well defined if and only if $\la \fh(p)\ra=\la f_{\gl,p}(\sip)\ra$ for some $p\in\pb\gD\bsb$.
\end{corollary}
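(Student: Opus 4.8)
The plan is to deduce the statement from Prop \ref{prop:monodromy_dar_cal_trafo}(1) and Cor \ref{cor:monodromy_fop}, exactly as in the proof of Cor \ref{cor:monodromy_firstorder}. Fix a point $p\in\pb\gD\bsb$ and put $P=\proj(p)$. By Prop \ref{prop:monodromy_dar_cal_trafo}(1), the pushforward of $\la\fh\ra$ to the $j$-fold cover of $\gD\bsb$ is well defined if and only if $\la\fh(p)\ra$ is invariant under $\big(\mon_P(\gl\of)\big)^j$, so the whole claim reduces to determining the points of $S^n$ fixed by that power.

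For the direction from the pushforward condition to the equality $\la\fh(p)\ra=\la f_{\gl,p}(\sip)\ra$, I would argue as follows. An invariant point $\la\fh(p)\ra$ is an eigendirection of $\big(\mon_P(\gl\of)\big)^j$, and since it is a point of $S^n=\Pro(\Li^{n+1})$ it is a null line; hence it is a null eigenspace of $\big(\mon_P(\gl\of)\big)^j$, and by Cor \ref{cor:monodromy_fop} the only such eigenspace is $\la f_{\gl,p}(\sip)\ra$, so $\la\fh(p)\ra=\la f_{\gl,p}(\sip)\ra$. I would also record that this identity, once it holds for one $p$, holds for every $p\in\pb\gD\bsb$: applying $\gG_q^{~p}(\gl\pb\of)$ to both sides and using \eqref{eq:propsgG2} gives $\la\fh(q)\ra=\la f_{\gl,q}(\sip)\ra$, so the phrasing ``for some $p$'' is unambiguous.

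For the converse I would check directly that $\mon_P(\gl\of)$ fixes $\la f_{\gl,p}(\sip)\ra$; then so does every power $\big(\mon_P(\gl\of)\big)^j$, and Prop \ref{prop:monodromy_dar_cal_trafo}(1) yields the pushforward. Starting from the explicit formula \eqref{eq:lim_mon_fo_ind} for $\mon(\gl\ppf)$, a short computation using $\ipl f(\sip),f(\sip)\ipr=0$ and $\ipl\dd_s f(\nu),f(\sip)\ipr=\tfrac12\dd\ipl f,f\ipr(\nu)=0$ shows that the orthogonal representative in \eqref{eq:lim_mon_fo_ind} fixes the null vector $f(\sip)$, so $\mon(\gl\ppf)$ fixes $\la f(\sip)\ra$. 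Substituting this into the expression \eqref{eq:monodromy_first_kind} for $\mon_P(\gl\of)$ and using $\gG_\sip^{~p}(\Gtr{\gl\pb\ppf}p{\gl\pb\of})=(\gG_p^{~\sip}(\Gtr{\gl\pb\ppf}p{\gl\pb\of}))^{-1}$ from \eqref{eq:propsgG2}, I would conclude that $\mon_P(\gl\of)$ fixes $\gG_p^{~\sip}(\Gtr{\gl\pb\ppf}p{\gl\pb\of})\la f(\sip)\ra$, which by Thm \ref{theorem:cal_trafo_surface_fo}(1) equals $\la f_{\gl,p}(\sip)\ra=\la\fh(p)\ra$.

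I do not expect a serious obstacle here, since all the real work sits in Cor \ref{cor:monodromy_fop} and Thm \ref{theorem:cal_trafo_surface_fo}. The only two points that need care are the observation that a point of $S^n$ can only be a \emph{null} eigendirection of the monodromy (which is what makes Cor \ref{cor:monodromy_fop}, controlling precisely the null eigenspaces of the powers, applicable), and the little linear-algebra check that $\mon(\gl\ppf)$ fixes $f(\sip)$ with eigenvalue $1$ rather than merely preserving the line $\la f(\sip)\ra$.
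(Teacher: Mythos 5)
Your proposal is correct and follows essentially the same route as the paper, which deduces the corollary from Prop \ref{prop:monodromy_dar_cal_trafo} and Cor \ref{cor:monodromy_fop} exactly as in the proof of Cor \ref{cor:monodromy_firstorder}: a point of $S^n$ invariant under $(\mon_P(\gl\of))^j$ is precisely a null eigenspace of it, and Cor \ref{cor:monodromy_fop} identifies the unique such eigenspace as $\la f_{\gl,p}(\sip)\ra$. Your additional explicit check that $\mon(\gl\ppf)$ fixes the vector $f(\sip)$ is a harmless (and correct) elaboration of the converse direction, which in the paper is already covered by the eigenspace statement since projective invariance only requires $\la f_{\gl,p}(\sip)\ra$ to be an eigendirection.
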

%Fig \ref{fig:darelli} shows two Darboux transforms of an ellipsoid.
\begin{figure}[h]
\centering
~~~~~~\begin{subfigure}[t]{0.45\textwidth}
\centering
\includegraphics[height=190pt]{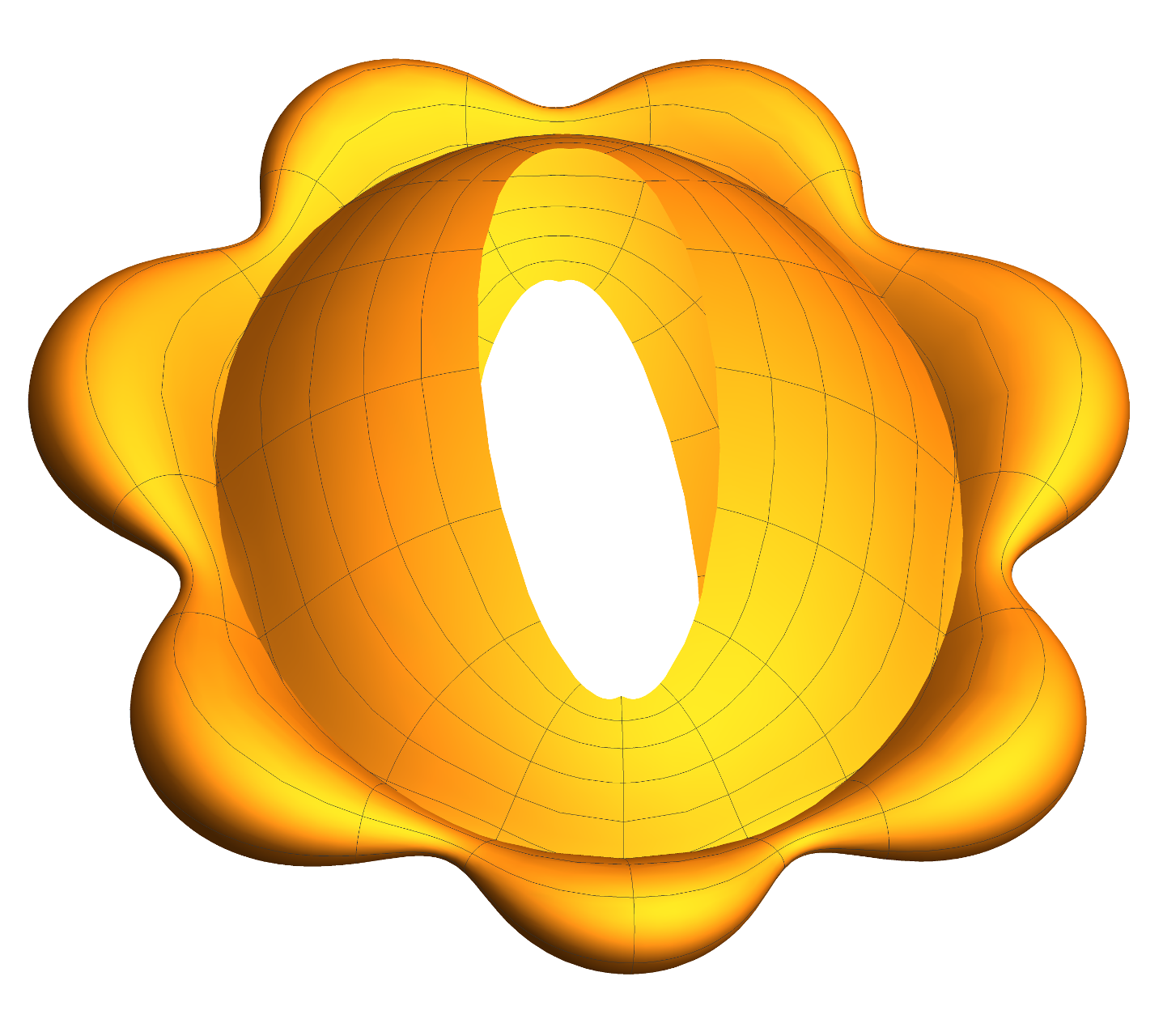}
\end{subfigure}~~~~~~~~
\begin{subfigure}[t]{0.45\textwidth}
\centering
\includegraphics[height=190pt]{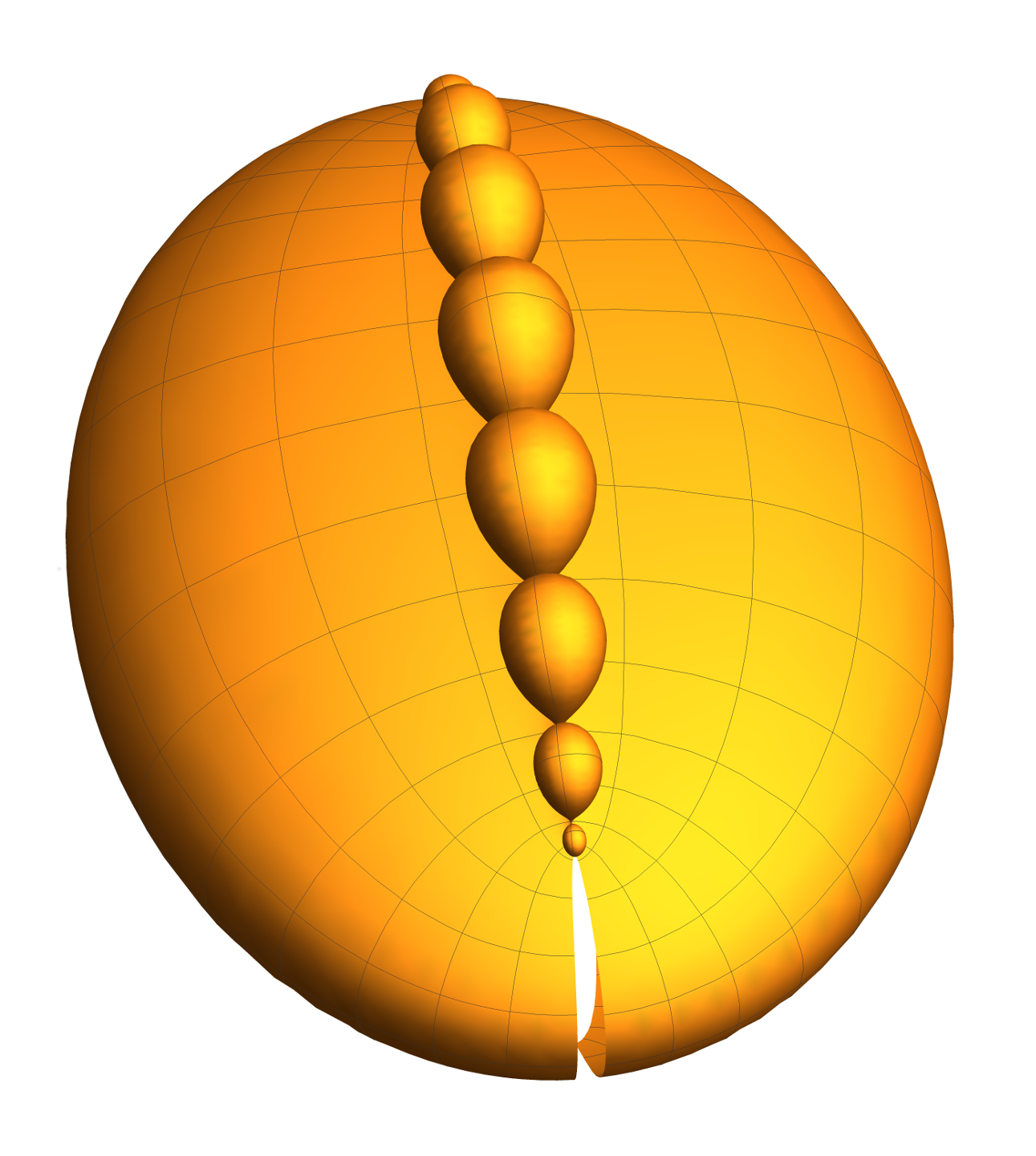}
\end{subfigure}
\caption{Darboux transforms of an ellipsoid.}\label{fig:darelli}
\end{figure}

%--------------------------------------------------------------------------------------------------
%---------------------- P O L E  O F  S E C O N D   O R D E R -------------------------------------	
%--------------------------------------------------------------------------------------------------
\section{Pole of second order}\label{section:surf_pole_so}
We now come to the case of a meromorphically isothermic surface $(\la f\ra,\Qua)$ with simply connected domain $\gD$ and such that $\Qua$ has a pole of second order at an interior point $\sip \in\gD$, but is nowhere zero and holomorphic otherwise. For simplicity, we say that $(\la f\ra,\Qua)$ has a pole of second order at $s$. In this case, the 1-form $\of$ associated to $(\la f\ra,\Qua)$ has a pole of second order at $\sip$. But we can gauge transform every $\gl\of$ using a singular map $\hyfr$ to a 1-form $\gatr{\hyfr}{\gl\of}$ that has only a pole of first order.

As explained below Defs \ref{defi:iso_surf} and \ref{defi:trafos_surf}, we may replace $\Qua$ by any nonzero, real multiple of it and thereby achieve that the holomorphic 1-form $\sqrt{\Qua}$ has residue $1$ at $s$. Denote by $z$ a coordinate adapted to $\sqrt{\Qua}$, such that $\Qua=\dd z^2/z^2$. As before, we assume $z$ to be defined on all of $\gD$. Polar coordinates $(r,\phi)$ associated to $z$ are curvature line coordinates for $\la \pb f\ra=\proj^*\la f\ra$ because
$$\pb\Qua=\frac{\left(\dd r \,e^{i\phi}+r\,i\dd\phi \,e^{i\phi}\right)^2}{r^2 e^{2i\phi}}=\frac{\dd r^2}{r^2}-\dd\phi^2+\frac{2i}{r}\dd r\dd\phi,$$
such that $\pb\Qua$ and thus also the Hopf differential $\pb\Qho$ restricted to $\phi=const.$ or $r=const.$ are real (cf. \cite[Ch VI, Sect 1.2]{hop89} or \cite[Lemma 1.1.5]{fu18}). Conformal curvature line coordinates on $\pb\gD\backslash\{\sip\}$ are thus given by $(\rho, \phi)$ with $\rho=\ln(r)$, that is, $\pb z=e^{\rho+i\phi}$.

The map $\hyfr$ that we use for the gauge transformation has the form of a product $\hyfr= \la F\ra \la R\ra$. The first factor $\la F\ra:\gD\bsb\rightarrow \Pro O(\Rmn)$ is constructed with a smooth frame $F$ for the flat lift $f$ of $\la f\ra$ with respect to $z$. We choose $F$ such that
\begin{equation}
\forall p\in\gD\bsb:~~~~F(p)\baszer=f(p),~~~F(p)\frac 12 (\bastanu-i\bastanv)=\frac{\p f}{\p z}(p),~~~F(p)\basinf\in \La \frac{\p f}{\p u}(p),\frac{\p f}{\p v}(p)\Ra^\perp,
\label{eq:props_of_frame}
\end{equation}
where $\baszer,\basinf\in\Li^{n+1}$ with $\ipl \baszer,\basinf\ipr=-1$ and $\bastanu$, $\bastanv$ are two orthonormal vectors in $\la \baszer,\basinf\ra^\perp$. We define the second factor $\la R\ra:\gD\bsb\rightarrow \Pro O(\Rmn)$ by
$$R\,(\bastanu-i\bastanv)=\frac{z}{|z|}(\bastanu-i\bastanv),~~~~R\,\baszer=\frac 1 {|z|}\,\baszer,~~~~R\,\basinf=|z|\,\basinf,~~~~R\big|_{\la \baszer,\basinf,\bastanu,\bastanv\ra^\perp}=\id.$$
It is singular at $s$ and so the product $g=\la F\ra\la R\ra=\la FR\ra$ is a singular frame for $\la f\ra$ with limit
\begin{equation}
\lqb \hyfr(q)=\lqb\la F(q)R(q)\ra=\lqb\la |z(q)|F(q)R(q)\ra=\la f(\sip)\basinf^*\ra.
\label{eq:limit_singtrafo}
\end{equation}
The $(1,0)$-component of the gauge transformed 1-form $\gatr \hyfr {\gl\of}$ then computes to
$$(\gatr \hyfr {\gl\of})^{(1,0)}=R^{-1}F^{-1}\gl\of^{(1,0)} FR+R^{-1}F^{-1}\p F R+R^{-1}\p R$$
with
\begin{align*}
R^{-1}F^{-1}\gl\of^{(1,0)} FR=&R^{-1}\frac{\gl}2\baszer\wedge\left(\bastanu+i\bastanv\right)\frac{\dd z}{z^2}\,R+\R\,\id\\
=&\frac{\gl}2 \,\baszer\wedge\left(\bastanu+i\bastanv\right)\frac{\dd z}{z}+\R\,\id,\\
R^{-1}F^{-1}\p F R=&R^{-1}\left(\Fr^{\perp\perp}-\basinf\wedge (\bastanu-i\bastanv)\frac{\dd z}2+\baszer\wedge \Fr^\perp\right)R\\
=&R^{-1}\Fr^{\perp\perp} R-\basinf\wedge (\bastanu-i\bastanv)\frac {\dd z}{2z}+|z|\,\baszer\wedge R^{-1}\Fr^\perp,\\
R^{-1}\p R=&\frac{\dd z}{2z}\,\left(\basinf\wedge\baszer-i\,\bastanu\wedge \bastanv\right),
\end{align*}
where $\Fr^{\perp\perp}$ is the projection of $F^{-1}\p F$ onto $\gL^2\la \baszer,\basinf\ra_\Com^\perp$ and the 1-form $\Fr^\perp$ is $\la \baszer,\basinf\ra_\Com^\perp$-valued. In particular, $R^{-1} \Fr^{\perp\perp} R$ and $R^{-1}\Fr^\perp$ are bounded 1-forms on $\gD\bsb$ with respect to $|\cdot|$. Thus,
\begin{equation}
\gatr \hyfr {\gl\of}=\ppf_\gl+2\Rea\left(R^{-1}\Fr^{\perp\perp}R+|z|\,\baszer\wedge R^{-1}\Fr^\perp\right)+\R\,\id
\label{eq:form_of_tieta_so_s}
\end{equation}
with the pure pole form
\begin{equation}\label{eq:form_of_xila_so_s}
\begin{split}
\ppf_\gl:=&\Re\left(\Big(\left(\baszer-\bastanu\right)\wedge\left(\basinf-\gl\bastanu\right)+i \left(\bastanu-\gl\baszer-\basinf\right)\wedge\bastanv\Big)\left(-\frac{\dd z}{z}\right)\right)+\R\,\id\\
=:&\Rea\left((\ppfr+i\ppfi)\left(-\frac{\dd z}{z}\right)\right).
\end{split}
\end{equation}
Clearly, $\gatr \hyfr {\gl\of}-\ppf_\gl$ is bounded, such that $\gatr\hyfr{\gl\of}$ is a pole form. 

For later use, we record the eigenvalues and -vectors of $\ppfr$ and $\ppfi$. Direct computation shows that
\begin{align}
\Ppfr(V_\pm)&=\pm\sqrt{1-2\gl}\,V_\pm~~~\text{with}&~V_\pm&=\gl\baszer+\basinf-2\gl\bastanu\pm\sqrt{1-2\gl}\,(\gl\baszer-\basinf),\\
\Ppfi(W_\pm)&=\pm\sqrt{2\gl-1}\,W_\pm~~~\text{with}&~W_\pm&=\gl\baszer+\basinf-\bastanu\pm\sqrt{2\gl-1}\,\bastanv.
\end{align}
Thus, $\ppf_\gl$ is Minkowski, degenerate or spacelike for $1-2\gl$ greater than, equal to or smaller than zero, respectively. Using \eqref{eq:factori_xi}, the primitives of $\pb\ppf_\gl$ are explicitly given by
\begin{align}
\gG_p(\pb\ppf_\gl)=&\Bigg\langle \left(\frac{r}{r(p)}\right)^{\sqrt{1-2\gl}}\Pi_{\la V_+\ra}+\left(\frac{r}{r(p)}\right)^{-\sqrt{1-2\gl}}\Pi_{\la V_-\ra}+\label{eq:expo_xila_so_s}\\
&+e^{\sqrt{2\gl-1}(\phi-\phi(p))}\Pi_{\la W_+\ra}+e^{-\sqrt{2\gl-1}(\phi-\phi(p))}\Pi_{\la W_-\ra}+\Pi_{\la V_+,V_-,W_+,W_-\ra^\perp}\Bigg\rangle\notag,
\end{align}
where $(r,\phi)$ are polar coordinates associated to $z$. 

The conditions \eqref{eq:props_of_frame} on the frame $F$ do not determine it uniquely. The following Lemma shows that we can choose it such that not only $\gatr \hyfr {\gl\of}-\ppf_\gl$, but even $(\gatr \hyfr {\gl\of}-\ppf_\gl)/|z|$ is bounded.
\begin{lemma}\label{lemma:nice_frame_s}
Let $\la \curdir\ra$ be a smooth curvature direction field of $\la f\ra$ on $\gD\bsb$. The frame $F$ can be chosen such that, additionally to \eqref{eq:props_of_frame}, it frames the $\la \curdir\ra$-curvature sphere congruence, that is\footnote{Here, $(\curdir\cdot\dd)^2f:=\dd(\dd f(\curdir))(\curdir)$.},
\begin{equation}
F\la \baszer,\basinf,\bastanu,\bastanv\ra=\la f,\dd f,(\curdir\cdot\dd)^2f\ra,
\label{eq:condition1_niceframe}
\end{equation}
and $\frac 1 {|z|} (\gatr{\hyfr}{\gl\of}-\ppf_\gl)$ is bounded for any holomorphic coordinate $z$ around $\sip$.
\end{lemma}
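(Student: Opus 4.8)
The plan is to reduce the statement to a first-order vanishing property of the $\gL^2\la\baszer,\basinf\ra_\Com^\perp$-valued part $\Fr^{\perp\perp}$ of the Maurer--Cartan form of $F$, and then to secure that property block by block, spending the curvature-sphere condition \eqref{eq:condition1_niceframe} on the critical block and the residual rotational freedom of the normal frame on the last one. By \eqref{eq:form_of_tieta_so_s} we have $\gatr{\hyfr}{\gl\of}-\ppf_\gl=2\Rea\!\big(R^{-1}\Fr^{\perp\perp}R+|z|\,\baszer\wedge R^{-1}\Fr^\perp\big)$; the summand $|z|\,\baszer\wedge R^{-1}\Fr^\perp$, divided by $|z|$, is the bounded 1-form $2\Rea(\baszer\wedge R^{-1}\Fr^\perp)$, and since $\Fr^{\perp\perp}$ is valued in $\gL^2\la\baszer,\basinf\ra_\Com^\perp$, on which $R$ acts by conjugation with the orthogonal transformation $R|_{\la\baszer,\basinf\ra^\perp}$, the norms of $R^{-1}\Fr^{\perp\perp}R$ and $\Fr^{\perp\perp}$ agree up to a constant. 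Hence the claim is equivalent to: $F$ can be chosen, keeping \eqref{eq:props_of_frame} and \eqref{eq:condition1_niceframe}, with $\Fr^{\perp\perp}=O(|z|)$ near $\sip$ for the adapted coordinate $z$ --- and then for every holomorphic coordinate, since any two vanish to the same order at $\sip$.

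Conditions \eqref{eq:props_of_frame} together with \eqref{eq:condition1_niceframe} fix $F\baszer=f$, the pair $F\bastanu,F\bastanv$ via $F(\bastanu-i\bastanv)=2\,\tfrac{\p f}{\p z}$, and $F\basinf$, the unique null vector of the curvature sphere $\la f,\dd f,(\curdir\cdot\dd)^2f\ra$ that is orthogonal to $\dd f$ with $\ipl f,F\basinf\ipr=-1$; the only freedom left is $F\mapsto Fg$ with a smooth $g\colon\gD\bsb\to O(n-2)$ acting on the normal frame $Fn_1,\dots,Fn_{n-2}$. Decomposing $\Fr^{\perp\perp}$ along $\gL^2\la\baszer,\basinf\ra^\perp=\gL^2\la\bastanu,\bastanv\ra\oplus\big(\la\bastanu,\bastanv\ra\wedge N\big)\oplus\gL^2N$ with $N:=\la\baszer,\basinf,\bastanu,\bastanv\ra^\perp$, I would treat the three blocks separately. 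The $\gL^2\la\bastanu,\bastanv\ra$-block of $F^{-1}\p F$ equals $\ipl\p(\dd f(\p_u)),\dd f(\p_v)\ipr$, the connection form of the orthonormal coordinate frame for the induced metric of the flat lift; since that metric is $\dd u^2+\dd v^2$ this block vanishes identically. The $\la\bastanu,\bastanv\ra\wedge N$-block collects the spacelike second fundamental forms $\mathrm{II}_{Fn_i}$ of $f$; because $F$ frames the $\la\curdir\ra$-curvature sphere, $\ipl(\curdir\cdot\dd)^2f,Fn_i\ipr=0$, and as $\curdir$ is a curvature direction this forces $\curdir$ into the kernel of each $\mathrm{II}_{Fn_i}$, so $|\mathrm{II}_{Fn_i}|$ is comparable to the norm of the Hopf differential of $\la f\ra$; writing $\Qho=\gk\Qua$ with $\gk$ real and $\Qua$ having a pole of order two at $\sip$, smoothness of $\Qho$ and reality of $\gk$ force $\gk$ to vanish to order four and $\Qho$ to order two at $\sip$, whence this block is $O(|z|^2)$. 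Finally the $\gL^2N$-block is the connection form of the normal bundle $F(N)=\la f,\dd f,(\curdir\cdot\dd)^2f\ra^\perp$ in the frame $Fn_i$; by the previous estimate this bundle agrees to order $|z|^2$ near $\sip$ with the smooth orthogonal complement of the central sphere congruence of $\la f\ra$, which carries a smooth frame obtained by parallel transport along the radii through $\sip$, whose connection form therefore vanishes at $\sip$; projecting and reorthonormalising that frame onto $F(N)$ produces a normal frame for which the $\gL^2N$-block is $O(|z|)$, and a choice of $g$ realising it finishes the construction, since the first block stays zero and the second stays $O(|z|^2)$ after conjugation by the bounded $g$.

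The step I expect to be the main obstacle is this last block: producing, near the non-smooth point $\sip$, a normal frame with first-order-vanishing connection form. Making it precise requires the comparison of $F(N)$ with the smooth central-sphere normal bundle together with a radial-gauge argument for the latter (the surface analogue of the gauge-fixing used for polarized curves in \cite{fu18p}, cf. \cite{fu18}). A secondary point needing care is that \eqref{eq:condition1_niceframe} forces the \emph{full} spacelike second fundamental forms --- not merely their trace-free parts --- to vanish at $\sip$; this combines that $\sip$ is an umbilic of $\la f\ra$, so $\Qho(\sip)=0$, with $\curdir$ being a curvature direction.
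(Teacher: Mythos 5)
Your overall strategy coincides with the paper's: reduce the claim to boundedness of $|z|^{-1}\Fr^{\perp\perp}=|z|^{-1}(F^{-1}\dd F)|_{\gL^2\la\baszer,\basinf\ra^\perp}$ and treat the blocks of $\gL^2\la\baszer,\basinf\ra^\perp=\gL^2\la\bastanu,\bastanv\ra\oplus\big(\la\bastanu,\bastanv\ra\wedge\la\baszer,\basinf,\bastanu,\bastanv\ra^\perp\big)\oplus\gL^2\la\baszer,\basinf,\bastanu,\bastanv\ra^\perp$ separately. Your first two blocks are fine and essentially as in the paper: the tangential block vanishes because the lift is flat with respect to $z$, and the mixed block is the collection of shape operators $A^f_{N_i}$ of the normal fields framing the $\la\curdir\ra$-curvature sphere congruence, which vanish at the umbilic $\sip$. (The paper settles for the cruder $O(|z|)$ bound there, writing $A^f_{N_i}=A^f_{\bar N_i}-\gk_i\,\id$ with $\bar N_i$ smooth on $\gD$ and invoking Lipschitz continuity of eigenvalues, instead of your $O(|z|^2)$ bound through the order of vanishing of $\Qho=\gk\Qua$; both suffice.)

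The genuine gap is the $\gL^2\la\baszer,\basinf,\bastanu,\bastanv\ra^\perp$-block, which you flag as the "main obstacle" but do not close. The $O(|z|^2)$ agreement of the curvature-sphere normal bundle with the central-sphere normal bundle does not by itself control the difference of their connection forms: for that you must differentiate the difference of the two orthogonal projections, and this derivative involves $\dd\curdir$, which blows up like $1/|z|$ at $\sip$ (the curvature line foliations of $\Qua=\dd z^2/z^2$ are the radial lines and circles, so the curvature direction field has index $1$ and is not Lipschitz at the umbilic); your radial-gauge construction therefore needs additional derivative estimates that are not supplied. The paper sidesteps the problem entirely: since $\la f\ra$ is isothermic its normal bundle is flat, so one may take the $N_i+\la f\ra$ to be \emph{parallel} orthonormal normal fields and fix their lifts $N_i$ by requiring $\la N_1,\dots,N_{n-2}\ra^\perp$ to be the curvature sphere congruence; choosing $F$ to map the constant basis $(\basnor_1,\dots,\basnor_{n-2})$ to $(N_1,\dots,N_{n-2})$ makes $(F^{-1}\dd F)|_{\gL^2\la\basnor_1,\dots,\basnor_{n-2}\ra}$ vanish identically, and no estimate at $\sip$ is needed for this block at all. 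With that single change -- using the residual $O(n-2)$ freedom to make the normal frame parallel rather than comparing with the central sphere congruence -- your argument collapses onto the paper's proof.
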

\begin{proof}
Let $N_1,...,N_{n-2}$ be lifts of parallel orthonormal normal fields such that
$$\la f,\dd f,(\curdir\cdot\dd)^2f\ra=\la N_1,...,N_{n-2}\ra^\perp.$$
Now choose the frame $F$ such that \eqref{eq:props_of_frame} and \eqref{eq:condition1_niceframe} hold and such that it maps a constant orthonormal basis $(\basnor_1,...,\basnor_{n-2})$ of $\la \baszer,\basinf,\bastanu,\bastanv\ra^\perp$ to $(N_1,...,N_{n-2})$. In order to show that $\frac 1 {|z|} (\gatr{\hyfr}{\gl\of}-\ppf_\gl)$ is bounded, by \eqref{eq:form_of_tieta_so_s} we need to show that $|z|^{-1}2\Rea(
\Fr^{\perp\perp})=|z|^{-1}(F^{-1}\dd F)\big|_{\gL^2\la \baszer,\basinf\ra^\perp}$ is bounded. Since we chose $f$ flat with respect to $z=u+iv$ and since the normal fields $N_i+\la f\ra$ are parallel, we find that
$$(F^{-1}\dd F)\big|_{\gL^2\la \bastanu,\bastanv\ra}=0,~~~~~~(F^{-1}\dd F)\big|_{\gL^2\la \basnor_1,...,\basnor_{n-2}\ra}=0,$$
respectively. Using this, one readily finds that boundedness of $|z|^{-1}(F^{-1}\dd F)\big|_{\gL^2\la \baszer,\basinf\ra^\perp}$ is equivalent to boundedness of the rescaled shape operators $|z|^{-1}A^f_{N_i}$ for $i=1,...,n-2$. Now choose lifts $\bar N_i$ of $N_i+\la f\ra$ that are smooth on all of $\gD$ and denote by $\gk_i$ the eigenvalues of $A^f_{\bar N_i}$ corresponding to the common eigendirection $\la \curdir\ra$. We then find $N_i=\bar N_i+\gk_i f$ and hence
\begin{equation}
|z|^{-1}A^f_{N_i}=|z|^{-1}\left(A^f_{\bar N_i}-\gk_i\,\id\right)=|z|^{-1}\left(A^f_{\bar N_i}-A^f_{\bar N_i}(\sip)\right)+|z|^{-1}(\gk_i(\sip)-\gk_i)\,\id,
\label{eq:bounded_resc_shapeoperator}
\end{equation}
where we used that $\sip$ is an umbilic of $\la f\ra$ and thus $A^f_{\bar N_i}(\sip)=\gk_i(\sip)\,\id$. That \eqref{eq:bounded_resc_shapeoperator} is bounded now follows from smoothness of $A^f_{N_i}$ on $\gD$ and Lipschitz continuity of the eigenvalues of a smooth endomorphism.
\end{proof}
We separately consider the cases $1-2\gl>0$ and $1-2\gl<0$  in the next two sections. We claim without proof that the results in the case $1-2\gl=0$ qualitatively resemble the case $1-2\gl>0$.
%------------------------------------------------------------------------------------------
%----------------------------      negative Lambda
%------------------------------------------------------------------------------------------
\subsection{The behaviour at and around the singularity for $1-2\gl>0$}\label{section:sopole_posla}
For $\gl\in\R^\times$ with $1-2\gl>0$, the pure pole form $\ppf_\gl$ given by \eqref{eq:form_of_xila_so_s} is Minkowski.
\subsubsection{Limiting behaviour of transforms}

\begin{theorem}\label{theorem:cal_trafo_surface_so}
Let $(\la f\ra,\Qua)$ have a pole of second order at $s$. For $\gl\in\R^\times$ with $1-2\gl>0$, let $(\la f_{\gl,p}\ra,\pb\Qua)$ be the $\gl$-Calapso transform of $(\la\pb f\ra,\pb\Qua)$ normalized at $p\in \pb\gD\bsb$ and $\comp\subset\pb\gD\bsb$ be such that $\comp\cub\subset\pb\gD$ is compact with limit point $\sip$.

\begin{enumerate}
	\item The limit of $\la f_{\gl,p}\ra$ restricted to $\comp$ at $\sip$ is $\pb\hyfr(p)\la K_\gl(p)\ra$, where $\la K_\gl(p)\ra$ is the image\footnote{See Prop \ref{prop:maintool_sing_s}.} of the map $\lim_{\comp\ni q\rightarrow \sip}\gG_p^{~q}(\gatr{\pb\hyfr}{\gl\pb\of})$.
	\item If $0<1-2\gl<1$, let $\la \pb\curdir\ra$ be the pullback to $\pb\gD\bsb$ of any smooth curvature sphere congruence $\la \curdir\ra$ of the restriction of $\la f\ra$ to $\gD\bsb$. Then the $\la\pb\curdir\ra$-curvature sphere congruence of $\la f_{\gl,p}\ra$ has the limit
	\begin{equation}
	\lqbc \la f_{\gl,p}(q),\dd_q f_{\gl,p},(\pb\curdir\cdot\dd)_q^2f_{\gl,p}\ra= \pb\hyfr(p)\gG_p^{~\sip}(\Gtr{\gl\pb\ppf}{p}{\gatr{\pb\hyfr}{\gl\pb\of}})\la \baszer,\bastanu,\bastanu,\basinf\ra,
	\label{eq:lim_curvature_sphere_caltrafo_so}
	\end{equation}
	where the frame $F$ in $\hyfr=\la F\, R\ra$ is chosen such that \eqref{eq:condition1_niceframe} holds.
\end{enumerate}
\end{theorem}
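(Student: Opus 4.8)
The plan is to reduce the $\gl$-Calapso transform to the primitives of the pole form $\gatr{\pb\hyfr}{\gl\pb\of}$ of \eqref{eq:form_of_tieta_so_s}, whose pure pole part is the Minkowski form $\pb\ppf_\gl$ of \eqref{eq:form_of_xila_so_s} (the form denoted $\gl\pb\ppf$ in the statement) when $1-2\gl>0$, and to exploit that the distinguished $4$-plane $\la\baszer,\basinf,\bastanu,\bastanv\ra$ appearing in \eqref{eq:condition1_niceframe} is exactly the sum of the eigenspaces of $\Ppfr$ and $\Ppfi$. First I would record the master identity: writing $f_{\gl,p}:=\gG_p(\gl\pb\Of)\pb f$ for the lift and using that $\hyfr=\la F\,R\ra$ satisfies $\hyfr^{-1}\la f\ra=\la\baszer\ra$ (by \eqref{eq:props_of_frame} and the definition of $R$, $(FR)\baszer$ is a nonzero multiple of $f$), \eqref{eq:gauge_trafo_beha} and \eqref{eq:primsofortholift} give
\[
\la f_{\gl,p}(q)\ra=\pb\hyfr(p)\,\gG_p^{~q}(\gatr{\pb\hyfr}{\gl\pb\of})\,\la\baszer\ra\qquad(q\in\pb\gD\bsb).
\]

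For part (1) I would apply Prop \ref{prop:maintool_sing_s} to the Minkowski pole form $\gatr{\pb\hyfr}{\gl\pb\of}$ with $\zeta=\sqrt{1-2\gl}$: there is a map $K_\gl$ with $\lqbc|\pb z(q)/\pb z(p)|^{\zeta}\gG_p^{~q}(\gatr{\pb\hyfr}{\gl\pb\of})=K_\gl(p)V_-^*$, where $V_-$ is the $\Ppfr$-eigenvector to the eigenvalue $-\zeta$. Since $\la V_-,\baszer\ra=-(1+\sqrt{1-2\gl})\neq0$, applying this limit to $\la\baszer\ra$ in the master identity absorbs the $|\pb z(q)/\pb z(p)|^{\zeta}$-rescaling and yields $\lqbc\la f_{\gl,p}(q)\ra=\pb\hyfr(p)\la K_\gl(p)\ra$; in particular $\la K_\gl(p)\ra$ is the image of the limiting map, as asserted.

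For part (2) I would first identify the $\la\pb\curdir\ra$-curvature sphere congruence of $\la f_{\gl,p}\ra$ with $\gG_p(\gl\pb\Of)$ applied to that of $\la\pb f\ra$. Differentiating $f_{\gl,p}=\gG_p(\gl\pb\Of)\pb f$ and using $\pb\Of(\pb f)=0$ gives $\dd f_{\gl,p}=\gG_p(\gl\pb\Of)\dd\pb f$; and since $\pb\curdir$ is a curvature direction it is an eigendirection of $\Qend$, so $\pb\Of(\pb\curdir)=\mu\,\pb f\wedge\dd\pb f(\pb\curdir)$ for a real function $\mu$, whence $(\pb\curdir\cdot\dd)^2 f_{\gl,p}=\gG_p(\gl\pb\Of)(\pb\curdir\cdot\dd)^2\pb f-\gl\mu\|\dd\pb f(\pb\curdir)\|^2 f_{\gl,p}$. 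The two mutual inclusions these formulas yield give $\la f_{\gl,p},\dd f_{\gl,p},(\pb\curdir\cdot\dd)^2 f_{\gl,p}\ra=\gG_p(\gl\pb\Of)\la\pb f,\dd\pb f,(\pb\curdir\cdot\dd)^2\pb f\ra$. By \eqref{eq:condition1_niceframe} (the nice frame of Lemma \ref{lemma:nice_frame_s}) the latter span is $\pb F\la\baszer,\basinf,\bastanu,\bastanv\ra$, which equals $\pb\hyfr\la\baszer,\basinf,\bastanu,\bastanv\ra$ because $R$ preserves $\la\baszer,\basinf\ra$ and $\la\bastanu,\bastanv\ra$ separately; combining with \eqref{eq:gauge_trafo_beha} as in the master identity, the congruence at $q$ becomes $\pb\hyfr(p)\gG_p^{~q}(\gatr{\pb\hyfr}{\gl\pb\of})\la\baszer,\basinf,\bastanu,\bastanv\ra$. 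For $0<1-2\gl<1$ the form $\pb\ppf_\gl$ is Minkowski with $\zeta<1$, hence of the first kind, so Prop \ref{prop:maintool_firstkind_surf} factorizes $\gG_p^{~q}(\gatr{\pb\hyfr}{\gl\pb\of})=\gG_p^{~q}(\Gtr{\pb\ppf_\gl}{p}{\gatr{\pb\hyfr}{\gl\pb\of}})\gG_p^{~q}(\pb\ppf_\gl)$, the first factor converging as $\comp\ni q\to\sip$ to $\gG_p^{~\sip}(\Gtr{\pb\ppf_\gl}{p}{\gatr{\pb\hyfr}{\gl\pb\of}})$. The crucial point is that $\gG_p^{~q}(\pb\ppf_\gl)$ maps $\la\baszer,\basinf,\bastanu,\bastanv\ra$ onto itself for every $q$: a short linear-algebra check (valid for $0<\gl<1/2$) identifies $\la\baszer,\basinf,\bastanu,\bastanv\ra$ with $\la V_+,V_-,W_+,W_-\ra$, on which, by the explicit formula \eqref{eq:expo_xila_so_s}, $\gG_p^{~q}(\pb\ppf_\gl)$ acts diagonally. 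Substituting and letting $q\to\sip$ gives \eqref{eq:lim_curvature_sphere_caltrafo_so}.

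The hard part is this last invariance together with its contrast to part (1). The primitives $\gG_p^{~q}(\gatr{\pb\hyfr}{\gl\pb\of})$ diverge as $q\to\sip$ — their components along $V_\pm$ scaling like $(r(q)/r(p))^{\pm\zeta}$ — so one must recognise that their restriction to the curvature sphere $4$-plane is nevertheless tame, precisely because that plane is the direct sum of the $V_\pm$- and $W_\pm$-eigenlines and the divergent pure-pole factor only rescales these lines separately, so that only the bounded first-kind factor of Prop \ref{prop:maintool_firstkind_surf} survives in the limit. For the limit point in part (1), by contrast, the subspace $\la\baszer\ra$ is \emph{not} $\gG_p^{~q}(\pb\ppf_\gl)$-invariant, so one genuinely needs the rank-one limit of Prop \ref{prop:maintool_sing_s}; consistently, $\pb\hyfr(p)\la K_\gl(p)\ra$ lies on the limit sphere, since the same factorization shows $\gG_p^{~\sip}(\Gtr{\pb\ppf_\gl}{p}{\gatr{\pb\hyfr}{\gl\pb\of}})^{-1}K_\gl(p)$ is a multiple of $V_+\in\la V_+,V_-,W_+,W_-\ra$.
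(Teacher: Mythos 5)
Your proposal is correct and follows essentially the same route as the paper: the gauge identity reducing $\la f_{\gl,p}(q)\ra$ to $\pb\hyfr(p)\gG_p^{~q}(\gatr{\pb\hyfr}{\gl\pb\of})\la\baszer\ra$, then Prop \ref{prop:maintool_sing_s} together with $\ipl V_-,\baszer\ipr\neq0$ for part (1), and the factorization of Prop \ref{prop:maintool_firstkind_surf} combined with the invariance of $\la\baszer,\basinf,\bastanu,\bastanv\ra$ under $\gG_p^{~q}(\pb\ppf_\gl)$ for part (2). You merely make explicit two steps the paper leaves implicit --- that the $\la\pb\curdir\ra$-curvature sphere congruence of $\la f_{\gl,p}\ra$ is the $\gG_p(\gl\pb\Of)$-image of that of $\la\pb f\ra$, and that the $4$-plane coincides with $\la V_+,V_-,W_+,W_-\ra$ --- both of which check out.
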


\begin{proof}
\begin{enumerate}
	\item Using \eqref{eq:gauge_trafo_beha}, Prop \ref{prop:maintool_sing_s} and $\ipl V_-,\baszer\ipr\neq 0$, we find
$$\lqbc \la f_{\gl,p}(q)\ra=\lqbc \gG_p^{~q}(\gl\pb\of)\la \pb f(q)\ra=\lqbc \pb\hyfr(p)\gG_p^{~q}(\gatr{\pb \hyfr}{\gl\pb\of})\la \baszer\ra= \pb\hyfr(p) \la K_\gl(p)\ra.$$
\item Let $F$ satisfy \eqref{eq:condition1_niceframe}. For $0<1-2\gl<1$, the pure pole form $\ppf_\gl$ is of the first kind. Using the factorization \eqref{eq:factorization_into_surf}, we then find
$$\la f_{\gl,p}(q),\dd_q f_{\gl,p},(\pb\curdir\cdot\dd)_q^2f_{\gl,p}\ra=\pb\hyfr(p) \gG_p^{~q}(\Gtr{\gl\pb\ppf}{p}{\gatr{\pb\hyfr}{\gl\pb\of}})\gG_p^{~q}(\pb\ppf_\gl)\la \baszer,\bastanu,\bastanu,\basinf\ra.$$
The result now follows because $\la \baszer,\bastanu,\bastanu,\basinf\ra$ is invariant under $\gG_p^{~q}(\pb\ppf_\gl)$ and $\gG_p^{~q}(\Gtr{\gl\pb\ppf}{p}{\gatr{\pb\hyfr}{\gl\pb\of}})$ converges as $q$ approaches $\sip$ in $\comp$ by Prop \ref{prop:maintool_firstkind_surf}.
\end{enumerate}
\end{proof}
We remark that for $0<1-2\gl<1$ it follows directly from the convergence of $\la f_{\gl,p}\ra$ and any of its smooth curvature sphere congruences that also the tangent congruence of $\la f_{\gl,p}\ra$ has a limit as one approaches $\sip$ inside $\comp$.

\begin{theorem}
Let $(\la f\ra,\Qua)$ have a pole of second order at $s$. Choose $\comp\subset\pb\gD\bsb$ such that $\comp\cub$ is compact with limit point $\sip$. Then any $\gl$-Darboux transform of $(\la\pb f\ra,\pb\Qua)$ with $1-2\gl>0$ converges to $\la f(\sip)\ra$ as one approaches $\sip$ in $\comp$. 
\end{theorem}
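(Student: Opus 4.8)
The plan is to mimic the proof of Theorem~\ref{theorem:darb_trafo_surface_fo} step by step, with the singular gauge $\hyfr$ and Prop~\ref{prop:maintool_sing_s} taking over the roles that the factorization \eqref{eq:factorization_into_surf} and the limit \eqref{eq:lim_ex_deg_s} play there. First I would set $\pf:=\gatr\hyfr{\gl\of}$. By \eqref{eq:form_of_tieta_so_s}--\eqref{eq:form_of_xila_so_s} this is a pole form on $\gD\bsb$ with $\pf-\ppf_\gl$ bounded and, since $1-2\gl>0$, $\ppf_\gl$ Minkowski; write $\zeta:=\sqrt{1-2\gl}>0$ for its positive eigenvalue and $V_\pm$ for the $\pm\zeta$-eigenvectors of the real part of $\ppf_\gl$ listed in Sect~\ref{section:surf_pole_so}. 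A direct computation from those formulas gives $\ipl\basinf,V_+\ipr=-\gl(1+\sqrt{1-2\gl})$ and $\ipl\basinf,V_-\ipr=-\gl(1-\sqrt{1-2\gl})$, both nonzero because $\gl\in\R^\times$ and $1-2\gl>0$; I will use these two facts at the very end. Applying Prop~\ref{prop:maintool_sing_s} to $\pf$ produces a smooth map $K_\gl:\pb\gD\bsb\to\Li^{n+1}$ satisfying \eqref{eq:prop_K_s}--\eqref{eq:limit_prim_mink_surf2}. Now fix $p\in\pb\gD\bsb$, choose a light-like lift $\fh(p)$ of the initial point of the given $\gl$-Darboux transform, and put $w_p:=\pb\hyfr(p)^{-1}\fh(p)\in\Li^{n+1}$. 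Combining $\la\fh(q)\ra=\gG_q^{~p}(\gl\pb\of)\la\fh(p)\ra$ with the gauge rule \eqref{eq:gauge_trafo_beha} for $g=\pb\hyfr$ and with \eqref{eq:primsofortholift} then gives, for all $q\in\pb\gD\bsb$,
\begin{equation}
\la\fh(q)\ra=\pb\hyfr(q)\,\la\gG_q^{~p}(\pb\Pf)\,w_p\ra .
\label{eq:darb_so_posla_master}
\end{equation}

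The next step is to multiply the vector $\gG_q^{~p}(\pb\Pf)w_p$ by the scalar $\lvert\pb z(q)/\pb z(p)\rvert^{\zeta}$, which does not change its span, and to apply \eqref{eq:limit_prim_mink_surf2}: the rescaled vector tends, as $\comp\ni q\to\sip$, to $\ipl K_\gl(p),w_p\ipr\,V_-$. I would then distinguish two cases. If $\ipl K_\gl(p),w_p\ipr\neq0$, then $\la\gG_q^{~p}(\pb\Pf)w_p\ra\to\la V_-\ra$; since $\pb\hyfr(q)\to\la f(\sip)\basinf^*\ra$ by \eqref{eq:limit_singtrafo} and $\ipl\basinf,V_-\ipr\neq0$, passing to the limit in \eqref{eq:darb_so_posla_master} yields $\la\fh(q)\ra\to\la f(\sip)\basinf^*\ra\la V_-\ra=\la f(\sip)\ra$. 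If instead $\ipl K_\gl(p),w_p\ipr=0$, then, $K_\gl(p)$ and $w_p$ being light-like and two light-like vectors being orthogonal only if they are parallel, we must have $\la w_p\ra=\la K_\gl(p)\ra$; hence $\la\fh(p)\ra=\pb\hyfr(p)\la K_\gl(p)\ra$ is exactly the limit point at $\sip$ of the $\gl$-Calapso transform normalized at $p$ (Thm~\ref{theorem:cal_trafo_surface_so}(1)). Inserting the relation $K_\gl(p)=\lvert\pb z(q)/\pb z(p)\rvert^{\zeta}\gG_p^{~q}(\pb\Pf)K_\gl(q)$ from \eqref{eq:prop_K_s} into \eqref{eq:darb_so_posla_master} and cancelling primitives by \eqref{eq:propsgG2} collapses the right-hand side to $\pb\hyfr(q)\la K_\gl(q)\ra$ for every $q$; letting $\comp\ni q\to\sip$ and using $\pb\hyfr(q)\to\la f(\sip)\basinf^*\ra$, $K_\gl(q)\to V_+/\ipl V_+,V_-\ipr$ from \eqref{eq:prop_K_s}, and $\ipl\basinf,V_+\ipr\neq0$, we again get $\la\fh(q)\ra\to\la f(\sip)\basinf^*\ra\la V_+\ra=\la f(\sip)\ra$. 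In either case $\la\fh\ra$ restricted to $\comp$ converges to $\la f(\sip)\ra$, as claimed.

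The genuine difficulty is the second, degenerate case: there the leading-order asymptotics supplied by Prop~\ref{prop:maintool_sing_s} vanish identically, so the estimate that settles the generic case gives nothing, and one must instead recognise this exceptional Darboux transform as the pushforward of the Calapso-limit map $q\mapsto\pb\hyfr(q)\la K_\gl(q)\ra$ and then conclude by combining the separate limits of the singular gauge $\pb\hyfr$ and of $K_\gl$. A subsidiary point to keep track of is that the limit $\la f(\sip)\basinf^*\ra$ of $\pb\hyfr$ is a rank-one endomorphism of $\Rmn$, so every time it is applied to a vector one has to check that the vector is not annihilated by it — which is precisely why the two pairings $\ipl\basinf,V_\pm\ipr$ computed in the first paragraph must be nonzero.
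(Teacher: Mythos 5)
Your argument is correct and is essentially the paper's own proof: the same case split on whether $\la \fh(p)\ra$ equals the Calapso limit point $\pb\hyfr(p)\la K_\gl(p)\ra$ (which you correctly identify with the vanishing of $\ipl K_\gl(p),w_p\ipr$, since two null vectors are orthogonal iff proportional), the same use of Prop \ref{prop:maintool_sing_s} via \eqref{eq:limit_prim_mink_surf2} and \eqref{eq:prop_K_s}, the same limit \eqref{eq:limit_singtrafo} of the singular frame, and the same nondegeneracy conditions $\ipl\basinf,V_\pm\ipr\neq 0$, which you even verify explicitly where the paper only asserts them.
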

\begin{proof}
Choose $p\in \comp$. We distinguish two cases. Let first $\la \fh(p)\ra$ be distinct from $ \pb\hyfr(p) \la K_\gl(p)\ra$. Then, using \eqref{eq:gauge_trafo_beha}, \eqref{eq:limit_singtrafo}, Prop \ref{prop:maintool_sing_s} and $\ipl \basinf,V_-\ipr\neq 0$ yields
\begin{align*}
\lqbc \la \fh(q)\ra&=\lqbc \pb\hyfr(q) \gG_q^{~p}(\gatr{\pb\hyfr}{\gl\pb\of})\ra \pb\hyfr(p)^{-1}\la \fh(p)\ra\\
&=\la f(\sip)\basinf^*\ra \la V_-K_\gl(p)^*\ra \pb\hyfr(p)^{-1}\la \fh(p)\ra=\la f(\sip)\ra.
\end{align*}
If on the other hand $\la \fh(p)\ra$ is equal to $\pb\hyfr(p)\la K_\gl(p)\ra$, then we use \eqref{eq:prop_K_s} and $\ipl \basinf,V_+\ipr\neq 0$ to find
$$\lqbc \la \fh(q)\ra=\lqbc  \pb\hyfr(q) \gG_q^{~p}(\gatr{\pb\hyfr}{\gl\pb\of})\la K_\gl(p)\ra=\lqbc \pb\hyfr(q) \la K_\gl(q)\ra=\la f(\sip)\ra.$$
\end{proof}

\subsubsection{Monodromy of transforms}\label{section:surf_monodromy_so_posla}
For spectral parameter $0<1-2\gl<1$, the pure pole form $\ppf_\gl$, given by \eqref{eq:form_of_xila_so_s}, is of the first kind. An expression for its monodromy is thus obtained directly from Prop \ref{prop:maintool_firstkind_surf}.

\begin{corollary}\label{cor:monodromy_sop_mink}
Let $(\la f\ra,\Qua)$ have a pole of second order at $s$. For $\gl\in\R$ with $0<1-2\gl<1$ and points $p\in\pb\gD\bsb$ and $P\in\gD\bsb$ with $\proj(p)=P$, the monodromy of $\gl\of$ with base point $P$ is
\begin{equation}
\mon_{P}(\gl\of)=\La \Pi_{\la W(\gl,p)),\bar W(\gl,p)\ra^\perp}+e^{-2\pi i\sqrt{1-2\gl}}\Pi_{\la W(\gl,p)\ra}+e^{2\pi i\sqrt{1-2\gl}}\Pi_{\la \bar W(\gl,p)\ra}\Ra,
\label{eq:monodromy_secondorder_mink_firstkind}
\end{equation}
where 
\begin{equation}
\la W(\gl,p)\ra=\pb\hyfr(p)\gG_{p}^{~\sip}(\Gtr{\pb\ppf_\gl}{p}{\gatr {\pb\hyfr} {\gl\pb\of}})\la W(\gl)\ra,~~~~W(\gl)=i\sqrt{1-2\gl}\,\bastanv\pm(\gl\baszer+\basinf-\bastanu)\in\Com^{n+2}.
\label{eq:defi_wpm_s_mink}
\end{equation}
All points on the $(n-2)$-dimensional sphere represented by $\la \Rea(W(\gl,p)),\Ima(W(\gl,p))\ra^\perp$ are invariant under $\mon_{P}(\gl\of)$. That sphere intersects the limiting curvature sphere of the $\gl$-Calapso transform $(\la f_{\gl,p}\ra,\pb\Qua)$ normalized at $p$ orthogonally in the limit point $\la f_{\gl,p}(\sip)\ra$ (cf. Thm \ref{theorem:cal_trafo_surface_so}).

Moreover, $(\mon_{P}(\gl\of))^j=\id$ if and only if $j\sqrt{1-2\gl}\in\mathbb N$.
\end{corollary}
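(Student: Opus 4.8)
The plan is to read the eigenvalue structure of $\mon_{P}(\gl\of)$ off \eqref{eq:monodromy_secondorder_mink_firstkind} and then reduce the assertion to the elementary fact that a rotation through the angle $2\pi\sqrt{1-2\gl}$ has trivial $j$-th power precisely when $j\sqrt{1-2\gl}\in\Z$. First I would pass from $\Pro O(\Rmn)$ to $O^+(\Rmn)$ via the group isomorphism $\la\cdot\ra\colon O^+(\Rmn)\to\Pro O(\Rmn)$ of Sect \ref{section:stateofart}: let $M\in O^+(\Rmn)$ be the orthochronous Lorentz transformation with $\la M\ra=\mon_{P}(\gl\of)$, so that $(\mon_{P}(\gl\of))^j=\id$ is equivalent to $M^j=\id$.

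Extending $M$ $\Com$-linearly to $\Com^{n+2}$, formula \eqref{eq:monodromy_secondorder_mink_firstkind} says that $M$ acts by the scalar $e^{-2\pi i\sqrt{1-2\gl}}$ on the line $\la W(\gl,p)\ra$, by $e^{2\pi i\sqrt{1-2\gl}}$ on $\la\bar W(\gl,p)\ra$, and by $1$ on $\la W(\gl,p),\bar W(\gl,p)\ra^\perp$. Since $\ipl W(\gl),W(\gl)\ipr=0$ and $\ipl W(\gl),\bar W(\gl)\ipr=2(1-2\gl)\neq 0$ by \eqref{eq:defi_wpm_s_mink}, the plane $\la W(\gl,p),\bar W(\gl,p)\ra$ is non-degenerate, so these three subspaces give a direct sum decomposition of $\Com^{n+2}$; hence $M$, and therefore $M^j$, is diagonalisable over $\Com$ with eigenvalues $1$ and $e^{\mp 2\pi ij\sqrt{1-2\gl}}$.

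It follows that $M^j=\id$ if and only if all its eigenvalues equal $1$, i.e. $e^{2\pi ij\sqrt{1-2\gl}}=1$, i.e. $j\sqrt{1-2\gl}\in\Z$; and because $0<1-2\gl$ forces $\sqrt{1-2\gl}>0$ while $j\in\Nat$, the quantity $j\sqrt{1-2\gl}$ is positive, so this is the same as $j\sqrt{1-2\gl}\in\Nat$, which is the claim. I would remark in passing that one need not separately exclude $M^j=-\id$ when descending to $\Pro O(\Rmn)$: the eigenvalue $1$ always occurs, on the at least $n$-dimensional subspace $\la W(\gl,p),\bar W(\gl,p)\ra^\perp$, so $M^j\neq-\id$ (consistently with $-\id\notin O^+(\Rmn)$). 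Geometrically, the computation just expresses that $\mon_{P}(\gl\of)$ fixes the Lorentzian subspace $\la\Rea W(\gl,p),\Ima W(\gl,p)\ra^\perp$ pointwise and rotates the complementary spacelike $2$-plane by $\pm 2\pi\sqrt{1-2\gl}$.

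I do not anticipate any real obstacle: the argument is pure eigenvalue bookkeeping. The only two points that deserve a word are the non-degeneracy of $\la W(\gl,p),\bar W(\gl,p)\ra$, which secures the diagonalisability used above, and the passage between $O^+(\Rmn)$ and $\Pro O(\Rmn)$; both are immediate from the framework already in place, so the hardest part is simply keeping the bookkeeping straight.
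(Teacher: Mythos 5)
Your argument correctly establishes the final sentence of the corollary, but it only establishes that sentence: you treat the displayed formula \eqref{eq:monodromy_secondorder_mink_firstkind} as given data and "read the eigenvalue structure off" it, whereas that formula is itself the main assertion to be proved. In the paper it is obtained by first checking that for $0<1-2\gl<1$ the pure pole form $\ppf_\gl$ of \eqref{eq:form_of_xila_so_s} is Minkowski with positive eigenvalue $\zeta=\sqrt{1-2\gl}<1$, hence of the first kind, so that Prop \ref{prop:maintool_firstkind_surf} applies to the gauged pole form $\gatr{\pb\hyfr}{\gl\pb\of}$ and yields, via \eqref{eq:monodromy_first_kind}, that $\mon_P(\gl\of)$ is the conjugate of $\mon(\ppf_\gl)=e^{-2\pi\ppfi}$ (see \eqref{eq:monodromy_purepoleform}) by $\pb\hyfr(p)\gG_{p}^{~\sip}(\Gtr{\pb\ppf_\gl}{p}{\gatr {\pb\hyfr} {\gl\pb\of}})$; one then diagonalizes $\Ppfi$ over $\Com$ using the eigenvectors $W_\pm(\gl)$ with eigenvalues $\pm\sqrt{2\gl-1}=\pm i\sqrt{1-2\gl}$, which is exactly where the eigenvalues $e^{\mp 2\pi i\sqrt{1-2\gl}}$ and the lines $\la W(\gl,p)\ra$, $\la\bar W(\gl,p)\ra$ come from. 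None of this appears in your proposal. You also omit entirely the geometric claim that the invariant $(n-2)$-sphere intersects the limiting curvature sphere of the $\gl$-Calapso transform orthogonally in $\la f_{\gl,p}(\sip)\ra$; in the paper this requires identifying $\la f_{\gl,p}(\sip)\ra$ with the image of $\la V_+\ra$ under the limit map (using $\gG_{p}^{~q}(\pb\ppf_\gl)\la\baszer\ra\rightarrow\la V_+\ra$) and comparing the fixed-point set with the limiting curvature sphere \eqref{eq:lim_curvature_sphere_caltrafo_so} of Thm \ref{theorem:cal_trafo_surface_so}. These two items are the substance of the corollary, so the proposal as written has a genuine gap.

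The part you do prove is sound and in fact slightly more careful than the paper's one-line treatment: the verification that $\ipl W(\gl),\bar W(\gl)\ipr=2(1-2\gl)\neq 0$ while $\ipl W(\gl),W(\gl)\ipr=0$, so that $\Com^{n+2}$ decomposes as the direct sum of $\la W(\gl,p)\ra$, $\la\bar W(\gl,p)\ra$ and their common orthogonal complement and $\mon_P(\gl\of)$ is diagonalizable, together with the observation that the eigenvalue $1$ always occurs on an $n$-dimensional subspace so that no sign ambiguity arises when descending from $O^+(\Rmn)$ to $\Pro O(\Rmn)$, is exactly the right way to justify the equivalence $(\mon_P(\gl\of))^j=\id\Leftrightarrow j\sqrt{1-2\gl}\in\Nat$. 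If you prepend the derivation of \eqref{eq:monodromy_secondorder_mink_firstkind} from Prop \ref{prop:maintool_firstkind_surf} and add the orthogonal-intersection argument, the proof will be complete.
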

\begin{proof}
The expression \eqref{eq:monodromy_secondorder_mink_firstkind} with \eqref{eq:defi_wpm_s_mink} is obtained directly from \eqref{eq:monodromy_first_kind}. If $(\mon_P(\gl\of))^j\neq \id$, then it is a Euclidean rotation with fixed point set
$$\pb\hyfr(p)\gG_{p}^{~\sip}(\Gtr{\pb\ppf_\gl}{p}{\gatr {\pb\hyfr} {\gl\pb\of}})\la \Rea(W(\gl),\Ima(W(\gl))\ra^\perp.$$
The null lines in this set are the points of an $(n-2)$-dimensional sphere that intersects the limiting curvature sphere of $\la f_{\gl,p}\ra$ at $\sip$ orthogonally, as can be seen from \eqref{eq:lim_curvature_sphere_caltrafo_so}. Both these spheres contain the limit point 
$$\la f_{\gl,p}(\sip)\ra=\lim_{\comp\ni q\rightarrow \sip}\pb\hyfr(p)\gG_{ p}^{~q}(\Gtr{\pb\ppf_\gl}{p}{\gatr {\pb\hyfr} {\gl\pb\of}})\gG_{p}^{~q}(\pb\ppf_\gl)\la \baszer\ra= \pb\hyfr(p)\gG_{ p}^{~q}(\Gtr{\pb\ppf_\gl}{ p}{\gatr {\pb\hyfr} {\gl\pb\of}})\la V_+\ra.$$
and hence they intersect orthogonally in that point.
\end{proof}

\begin{corollary}\label{cor:monodromy_secondorder_firstkind_mink}
Let $(\la f\ra,\Qua)$ have a pole of second order at $s$. For $\gl\in\R^\times$ with $0<1-2\gl<1$, let $(\la f_{\gl,p}\ra,\pb\Qua)$ be the $\gl$-Calapso transform of $(\la \pb f\ra,\pb\Qua)$ normalized at $p\in\pb\gD\bsb$. 
%For $n\in\mathbb N$, denote by $(\gD\bsb)_n$ the $n$-fold cover of $\gD\bsb$ and by $\proj_n:\pb\gD\bsb\rightarrow (\gD\bsb)_n$ the corresponding projection.
The pushforward of $\la f_{\gl,p}\ra$ to the $j$-fold cover of $\gD\bsb$ exists if and only if $j\sqrt{1-2\gl}$ is an integer.
\end{corollary}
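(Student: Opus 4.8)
The plan is to read off the answer from Prop~\ref{prop:monodromy_dar_cal_trafo} together with the explicit monodromy computed in Cor~\ref{cor:monodromy_sop_mink}. Write $P=\proj(p)$. By the second part of Prop~\ref{prop:monodromy_dar_cal_trafo}, the pushforward of $\la f_{\gl,p}\ra$ to the $j$-fold cover of $\gD\bsb$ exists if and only if every point of the image of $\la f_{\gl,p}\ra$ is invariant under $\big(\mon_{P}(\gl\of)\big)^j$. So the whole statement reduces to deciding, for each $j\in\mathbb N$, whether this invariance holds. If $j\sqrt{1-2\gl}\in\mathbb N$ (equivalently, if $j\sqrt{1-2\gl}$ is a positive integer), then Cor~\ref{cor:monodromy_sop_mink} gives $\big(\mon_{P}(\gl\of)\big)^j=\id$, so the invariance is automatic and the pushforward exists. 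This settles the ``if'' direction.

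For the converse, suppose $j\sqrt{1-2\gl}\notin\mathbb N$. By Cor~\ref{cor:monodromy_sop_mink}, $\big(\mon_{P}(\gl\of)\big)^j$ is then a non-trivial Euclidean rotation whose set of fixed points is the $(n-2)$-dimensional sphere $\Sigma_j$ represented by the subspace $\pb\hyfr(p)\,\gG_{p}^{~\sip}(\Gtr{\pb\ppf_\gl}{p}{\gatr {\pb\hyfr} {\gl\pb\of}})\,\la \Rea(W(\gl)),\Ima(W(\gl))\ra^\perp$ of $\Rmn$. I would deduce the claim by showing that the image of $\la f_{\gl,p}\ra$ is not contained in $\Sigma_j$: any point of the image lying outside $\Sigma_j$ is then not fixed by $\big(\mon_{P}(\gl\of)\big)^j$, so the second part of Prop~\ref{prop:monodromy_dar_cal_trafo} forbids the pushforward to the $j$-fold cover. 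To establish non-containment, argue by contradiction. If the image of $\la f_{\gl,p}\ra$ were contained in $\Sigma_j=\Pro(\Li^{n+1}\cap \tilde V)$ for the corresponding linear subspace $\tilde V\subset\Rmn$, then $\la f_{\gl,p}\ra$ admits a lift with values in $\Li^{n+1}\cap\tilde V$, all of whose derivatives along $\pb\gD\bsb$ again take values in $\tilde V$. Consequently the $\la \pb\curdir\ra$-curvature sphere congruence of $\la f_{\gl,p}\ra$ is represented, pointwise, by a subspace of $\tilde V$, and hence so is its limit at $\sip$; that is, the limiting curvature sphere congruence of $\la f_{\gl,p}\ra$ computed in Thm~\ref{theorem:cal_trafo_surface_so}(2) (this is the step that uses $0<1-2\gl<1$) would be contained in $\Sigma_j$. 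But Cor~\ref{cor:monodromy_sop_mink} says that $\Sigma_j$ and this limiting curvature sphere congruence meet orthogonally in the single point $\la f_{\gl,p}(\sip)\ra$; in particular they are distinct spheres and neither contains the other, a contradiction.

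The step I expect to be the main obstacle is exactly the last one: ruling out that the two-dimensional surface $\la f_{\gl,p}\ra$ lies inside the $(n-2)$-sphere $\Sigma_j$, which is not excluded by dimension alone once $n\ge 4$. The argument above isolates the two facts that make it work --- the orthogonality of $\Sigma_j$ and the limiting curvature sphere congruence from Cor~\ref{cor:monodromy_sop_mink} (which itself rests on Thm~\ref{theorem:cal_trafo_surface_so}), and the elementary observation that a surface contained in a subsphere has all of its curvature spheres contained in that subsphere. Everything else is a direct application of Prop~\ref{prop:monodromy_dar_cal_trafo} and Cor~\ref{cor:monodromy_sop_mink}.
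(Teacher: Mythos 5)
Your proof is correct and follows essentially the same route as the paper: reduce via Prop \ref{prop:monodromy_dar_cal_trafo} to the invariance of the image under $(\mon_{\proj(p)}(\gl\of))^j$, dispose of the case $j\sqrt{1-2\gl}\in\mathbb N$ by Cor \ref{cor:monodromy_sop_mink}, and otherwise derive a contradiction from the orthogonal intersection of the fixed-point $(n-2)$-sphere with the limiting curvature sphere. Your elaboration of the final non-containment step (a surface inside a subsphere has its curvature sphere congruence, hence its limit, inside that subsphere) is in fact more precise than the paper's one-line dismissal.
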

\begin{proof}
By Prop \ref{prop:monodromy_dar_cal_trafo}, the pushforward of $\la f_{\gl,p}\ra$ to the $j$-fold cover of $\gD\bsb$ exists if and only if every point in the image of $\la f_{\gl,p}\ra$ is invariant under $(\mon_{\proj(p)}(\gl\of))^j$. When $j\sqrt{1-2\gl}$ is an integer, $(\mon_{\proj(p)}(\gl\of))^j=\id$ by Cor \ref{cor:monodromy_sop} and all points in $S^n$ are invariant under $\id$. But when $j\sqrt{1-2\gl}$ is not an integer, then $(\mon_{\proj(p)}(\gl\of))^j\neq \id$ and the set of points invariant under $(\mon_{\proj(p)}(\gl\of))^j$ form an $(n-2)$-dimensional sphere which is orthogonal to the limiting curvature sphere of $\la f_{\gl,p}\ra$ at $\sip$, by Cor \ref{cor:monodromy_sop}. If the image of $\la f_{\gl,p}\ra$ was contained in that $(n-2)$-dimensional sphere, it would intersect its limiting curvature sphere orthogonally, which is certainly not possible.
\end{proof}
\begin{corollary}
Let $(\la f\ra,\Qua)$ have a pole of second order at $s$ and $(\la \fh\ra,\pb\Qua)$ be a $\gl$-Darboux transform of $(\la \pb f\ra,\pb \Qua)$ with $0<1-2\gl<1$. The pushforward of $\la\fh\ra$ to the $j$-fold cover of $\gD\bsb$ exists if and only if for any $p\in\pb\gD\bsb$ the point $\la \fh(p)\ra$ lies on the sphere
$$\la \Rea(W(\gl,p)),\Ima(W(\gl,p))\ra^\perp\cap\Li^{n+1}$$
or $j\sqrt{1-2\gl}$ is an integer, where $W(\gl,p)$ is as in \eqref{eq:defi_wpm_s_mink}.
\end{corollary}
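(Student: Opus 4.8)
The plan is to read the result off by combining part~1 of Prop~\ref{prop:monodromy_dar_cal_trafo} with the explicit description of the monodromy in Cor~\ref{cor:monodromy_sop_mink}. By part~1 of Prop~\ref{prop:monodromy_dar_cal_trafo}, the pushforward of $\la\fh\ra$ to the $j$-fold cover of $\gD\bsb$ exists if and only if $\la\fh(p)\ra$ is invariant under $\big(\mon_{\proj(p)}(\gl\of)\big)^j$; by \eqref{eq:behav_monodromy} this does not depend on the choice of $p\in\pb\gD\bsb$, so I would fix one $p$ and write $P=\proj(p)$. The first step is then to raise the formula \eqref{eq:monodromy_secondorder_mink_firstkind} for $\mon_P(\gl\of)$ to the $j$-th power: since the three orthogonal projections occurring there are mutually orthogonal idempotents summing to the identity, this simply replaces the eigenvalues $e^{\mp 2\pi i\sqrt{1-2\gl}}$ on $\la W(\gl,p)\ra$ and $\la\bar W(\gl,p)\ra$ by $e^{\mp 2\pi i j\sqrt{1-2\gl}}$ while keeping the eigenvalue $1$ on $\la W(\gl,p),\bar W(\gl,p)\ra^\perp$.

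Next I would distinguish two cases. If $j\sqrt{1-2\gl}\in\mathbb N$, then $e^{\mp 2\pi i j\sqrt{1-2\gl}}=1$, so $\big(\mon_P(\gl\of)\big)^j=\id$ (this is already recorded at the end of Cor~\ref{cor:monodromy_sop_mink}); then every point of $S^n$, and in particular $\la\fh(p)\ra$, is invariant, which gives the second alternative of the claim. If on the other hand $j\sqrt{1-2\gl}\notin\mathbb N$, then $e^{\mp 2\pi i j\sqrt{1-2\gl}}\neq 1$, and I would show that the point $\la\fh(p)\ra$ of $S^n$ --- a real null line --- is invariant under $\big(\mon_P(\gl\of)\big)^j$ if and only if its lift $\fh(p)$ lies in the eigenvalue-$1$ eigenspace $\la W(\gl,p),\bar W(\gl,p)\ra^\perp$. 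Indeed, a real vector is an eigenvector of the orthogonal representative of $\big(\mon_P(\gl\of)\big)^j$ only for a real eigenvalue; besides $1$, the only eigenvalue that can be real is $-1$ (which occurs exactly when $2j\sqrt{1-2\gl}$ is an odd integer), and its real eigenspace is $\la\Rea W(\gl,p),\Ima W(\gl,p)\ra$. A short computation from \eqref{eq:defi_wpm_s_mink} --- using $\ipl\baszer,\basinf\ipr=-1$ to get $\|\gl\baszer+\basinf-\bastanu\|^2=\|\sqrt{1-2\gl}\,\bastanv\|^2=1-2\gl>0$ and $\ipl\gl\baszer+\basinf-\bastanu,\bastanv\ipr=0$, together with the fact that $\pb\hyfr(p)\gG_{p}^{~\sip}(\Gtr{\pb\ppf_\gl}{p}{\gatr{\pb\hyfr}{\gl\pb\of}})\in\Pro O(\Rmn)$ preserves signature --- shows that this plane is spacelike and so contains no nonzero null vector. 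Hence $\fh(p)$ must lie in $\la W(\gl,p),\bar W(\gl,p)\ra^\perp$; since this complex subspace is the complexification of the real subspace $\la\Rea W(\gl,p),\Ima W(\gl,p)\ra^\perp$ and $\fh(p)$ is real, that is equivalent to $\la\fh(p)\ra$ lying on the $(n-2)$-sphere $\la\Rea W(\gl,p),\Ima W(\gl,p)\ra^\perp\cap\Li^{n+1}$, which is the first alternative. Combining the two cases yields the stated equivalence.

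I do not expect a genuine obstacle here --- the corollary is essentially a bookkeeping consequence of Prop~\ref{prop:monodromy_dar_cal_trafo} and Cor~\ref{cor:monodromy_sop_mink}. The one point that requires a little care is the subcase where the rotation angle $2\pi j\sqrt{1-2\gl}$ is an odd multiple of $\pi$, where one must make sure that no null direction in the rotated $2$-plane becomes fixed; this is precisely what the signature computation above takes care of. Alternatively, one could bypass that subcase altogether by invoking directly the description, given in the proof of Cor~\ref{cor:monodromy_sop_mink}, of the set of points of $S^n$ fixed by a nontrivial power of the monodromy as exactly this $(n-2)$-sphere.
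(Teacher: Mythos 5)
Your proposal is correct and follows essentially the same route as the paper, whose proof consists of the single line that the claim follows from Prop \ref{prop:monodromy_dar_cal_trafo} and the description of the monodromy and its fixed-point set in Cor \ref{cor:monodromy_sop_mink}. You merely make explicit the details the paper leaves implicit --- in particular the spacelikeness of $\la\Rea W(\gl,p),\Ima W(\gl,p)\ra$, which rules out fixed null directions in the rotated $2$-plane when $2j\sqrt{1-2\gl}$ is an odd integer --- and these details check out.
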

\begin{proof}
Again, this follows from Prop \ref{prop:monodromy_dar_cal_trafo} and Cor \ref{cor:monodromy_sop}.
\end{proof}

When $1<1-2\gl$, the pure pole form $\ppf_\gl$ is of the second kind. In that case, from Prop \ref{prop:strucutre_monodromy_pfsk} and Thm \ref{theorem:cal_trafo_surface_so} we can conclude that $\la f_{\gl,p}(\sip)\ra$ is an eigendirection of $(\mon_{\proj(p)}(\gl\of))^j$ for all $j\in\mathbb N$. Therefore, the pushforward of the Darboux transform with $\la \fh(p)\ra=\la f_{\gl,p}(\sip)\ra$ to $\gD\bsb$ is well defined. However, when $j\sqrt{1-2\gl}$ is not an integer, there may be an entire $(n-2)$-dimensional sphere in $S^n$ invariant under $\mon_{\proj(p)}(\gl\of)$ and if $j\sqrt{1-2\gl}$ is an integer, it may be that $\mon_{\proj(p)}(\gl\of)$ is the identity. It depends on that, whether there are further Darboux transforms which can be pushed forward to the $j$-fold cover of $\gD\bsb$ and, similarly, whether the pushforward of $\la f_{\gl,p}\ra$ is well defined.

%--------------------------------------------------------------------------------------------
%---------------------- POSITIVE LAMBDA  ----------------------------------------------------
%--------------------------------------------------------------------------------------------

\subsection{The behaviour at and around the singularity for $1-2\gl<0$}
\subsubsection{Limiting behaviour of transforms}

For spectral parameter $\gl$ such that $1-2\gl<0$, the pure pole form $\ppf_\gl$ given by \eqref{eq:form_of_xila_so_s} is spacelike. In particular, it is of the first kind and we can apply the results of Prop \ref{prop:maintool_firstkind_surf}.

\begin{theorem}\label{theorem:cal_trafo_so_posla_s}
Let $(\la f\ra,\Qua)$ have a pole of second order at $s$. For $\gl\in\R^\times$ with $1-2\gl<0$, let $(\la f_{\gl,p}\ra,\pb\Qua)$ be the $\gl$-Calapso transform of $(\la \pb f\ra,\pb\Qua)$ normalized at $p\in\pb\gD\bsb$. Choose $\comp\subset\pb\gD\bsb$ such that $\comp\cub$ is compact with limit point $\sip$. Then, as one approaches $\sip$ in $\comp$, the surface $\la f_{\gl,p}\ra$ tends towards
$$\la S_{\gl,p}\ra:=\pb\hyfr(p)\gG_{p}^{~\sip}(\Gtr {\pb\ppf_\gl}{p}{\gatr{\pb\hyfr}{\gl\pb\of}})\gG_p(\pb\ppf_\gl)\la \baszer\ra,$$
which is a parametrization of the universal cover of the limit set
\begin{equation}
\LiSe[\la f_{\gl,p}\ra]= \pb\hyfr(p)\gG_p^{~\sip}(\Gtr {\pb\ppf_\gl}{p}{\gatr{\pb\hyfr}{\gl\pb\of}})\Big(\left(\la \baszer,\bastanu,\bastanv,\basinf\ra\cap\Li^{n+1}\right) \bs\{\la W_+\ra,\la W_-\ra\}\Big),
\label{eq:formoflimitset_caltrafo_so}
\end{equation}
a 2-sphere with two points removed.
\end{theorem}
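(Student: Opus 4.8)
The plan is to conjugate everything onto the explicitly solvable pure pole form $\pb\ppf_\gl$ and read off the answer from the exponential \eqref{eq:expo_xila_so_s}. First I would use the singular gauge transformation $\hyfr=\la F\,R\ra$ from the start of Section~\ref{section:surf_pole_so}: since $F\baszer=f$ and $R\baszer=|z|^{-1}\baszer$ we have $\la\pb f\ra=\pb\hyfr\la\baszer\ra$, and \eqref{eq:gauge_trafo_beha} gives $\gG_p(\gl\pb\of)=\pb\hyfr(p)\,\gG_p(\gatr{\pb\hyfr}{\gl\pb\of})\,\pb\hyfr^{-1}$, so that
$$\la f_{\gl,p}(q)\ra=\gG_p^{~q}(\gl\pb\of)\la\pb f(q)\ra=\pb\hyfr(p)\,\gG_p^{~q}(\gatr{\pb\hyfr}{\gl\pb\of})\la\baszer\ra .$$
Since $1-2\gl<0$, the pure pole form $\ppf_\gl$ is spacelike, hence of the first kind, and $\gatr{\pb\hyfr}{\gl\pb\of}$ is a pole form whose pure pole part is $\pb\ppf_\gl$; the factorization \eqref{eq:factorization_into_surf} then gives
$$\la f_{\gl,p}(q)\ra=\pb\hyfr(p)\,\gG_p^{~q}\!\big(\Gtr{\pb\ppf_\gl}{p}{\gatr{\pb\hyfr}{\gl\pb\of}}\big)\,\gG_p^{~q}(\pb\ppf_\gl)\la\baszer\ra .$$
By Proposition~\ref{prop:maintool_firstkind_surf}(1) the middle factor tends, as $q\to\sip$ in $\comp$, to $\gG_p^{~\sip}\!\big(\Gtr{\pb\ppf_\gl}{p}{\gatr{\pb\hyfr}{\gl\pb\of}}\big)$; replacing it by this limit turns the right-hand side into $\la S_{\gl,p}(q)\ra$, so $\la f_{\gl,p}(q)\ra$ and $\la S_{\gl,p}(q)\ra$ differ by a M\"obius transformation tending to the identity, which is the precise sense in which $\la f_{\gl,p}\ra$ tends towards $\la S_{\gl,p}\ra$ as one approaches $\sip$ in $\comp$.

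Next I would identify the image of $\la S_{\gl,p}\ra$. Since $\pb\hyfr(p)$ and $\gG_p^{~\sip}(\Gtr{\pb\ppf_\gl}{p}{\gatr{\pb\hyfr}{\gl\pb\of}})$ are diffeomorphisms of $S^n$, this reduces to describing the image of $q\mapsto\gG_p^{~q}(\pb\ppf_\gl)\la\baszer\ra$. Using the eigendata recorded before Lemma~\ref{lemma:nice_frame_s}: for $1-2\gl<0$ the numbers $\pm\sqrt{1-2\gl}$ are purely imaginary eigenvalues of $\Ppfr$ with complex conjugate eigenvectors $V_\pm$, while $\pm\sqrt{2\gl-1}$ are real eigenvalues of $\Ppfi$ with real eigenvectors $W_\pm$. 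A short computation then shows $\la\baszer,\bastanu,\bastanv,\basinf\ra=P_V\oplus^\perp P_W$ with $P_V:=\la\Rea(V_+),\Ima(V_+)\ra$ spacelike and $P_W:=\la W_+,W_-\ra$ of signature $(1,1)$, that this $4$-dimensional subspace has signature $(3,1)$, and that \eqref{eq:expo_xila_so_s} presents $\gG_p(\pb\ppf_\gl)$ as the commuting product of a rotation of $P_V$ through an angle linear in $\rho=\ln r$, a boost of $P_W$ with parameter linear in $\phi$, and the identity on $\la\baszer,\bastanu,\bastanv,\basinf\ra^\perp$. From $\Ppfr\baszer=\baszer-\bastanu\ne0$ and $\Ppfi\baszer=\bastanv\ne0$ one reads off that $\baszer$ has nonzero, non-null components in both $P_V$ and $P_W$; in particular $\la\baszer\ra$ is moved by both the rotation and the boost and is distinct from the boost poles $\la W_+\ra$, $\la W_-\ra$.

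Carrying out the orbit computation in coordinates adapted to $P_V\oplus^\perp P_W$ --- in which $\Pro\big(\la\baszer,\bastanu,\bastanv,\basinf\ra\cap\Li^{n+1}\big)$ is a round $2$-sphere with $\la W_+\ra$, $\la W_-\ra$ as its poles --- I expect to find that the orbit of $\la\baszer\ra$ under this commuting rotation--boost group is precisely the complement of the two poles: the poles are omitted because the whole group fixes them while $\la\baszer\ra$ is not fixed, and every other point is attained because as $\rho$ runs over its (infinite) half-line the rotation already wraps each longitude circle infinitely often, while as $\phi$ runs over $\R$ the boost parameter sweeps the latitudes monotonically over the full open interval. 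The same computation shows $q\mapsto\gG_p^{~q}(\pb\ppf_\gl)\la\baszer\ra$ is a covering map onto this $2$-sphere with two points removed, so, $\pb\gD\bsb$ being simply connected, it is the universal cover; composing with the diffeomorphism $\pb\hyfr(p)\gG_p^{~\sip}(\Gtr{\pb\ppf_\gl}{p}{\gatr{\pb\hyfr}{\gl\pb\of}})$ then shows that $\la S_{\gl,p}\ra$ parametrizes the universal cover of $\LiSe[\la f_{\gl,p}\ra]$, which, being the image of a round $2$-sphere minus two points under a M\"obius transformation, is again a $2$-sphere with two points removed.

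The hard part will be this last orbit computation: verifying that the orbit of $\la\baszer\ra$ under the commuting rotation--boost subgroup equals the round $2$-sphere minus exactly the two boost poles --- no larger, by the omission argument, and no smaller, by surjectivity separately in the longitude and the latitude directions --- and that the resulting orbit map is genuinely a covering map rather than merely a surjective immersion. Everything else is bookkeeping with \eqref{eq:gauge_trafo_beha}, \eqref{eq:factorization_into_surf} and Proposition~\ref{prop:maintool_firstkind_surf}.
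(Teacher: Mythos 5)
Your proposal is correct and takes essentially the same route as the paper: gauge by $\hyfr$, factor via \eqref{eq:factorization_into_surf}, apply Prop \ref{prop:maintool_firstkind_surf} to the spacelike (hence first-kind) pure pole form $\ppf_\gl$, and compute the orbit of $\la \baszer\ra$ under the commuting rotation--boost flow of $\gG_p(\pb\ppf_\gl)$. The only cosmetic differences are that the paper quantifies ``tends towards'' by showing $\ipl f_{\gl,p},S_{\gl,p}\ipr\rightarrow 0$ for lifts bounded from below (where you invoke that the connecting M\"obius transformation tends to the identity), and it records the orbit explicitly as the set of lines $\la \ga V_++\bar\ga V_-+\beta W_++\beta^{-1}W_-\ra$ with $|\ga|$ fixed and $\beta\in\R^\times$ rather than in your longitude--latitude language.
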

\begin{proof}
With the orthogonal lifts $\pb G$ and $\pb\Ppf_\gl$ of $\pb g$ and $\pb\ppf_\gl$, respectively, consider the lifts
\begin{align}
f_{\gl,p}&=\pb G(p)\gG_p(\Gtr {\pb\Ppf_\gl}{p}{\gatr{\pb G}{\gl\pb\Of}})\gG_p(\pb\Ppf_\gl)\,\baszer,\notag\\
S_{\gl,p}&=\pb G(p)\gG_p^{~\sip}(\Gtr {\pb\Ppf_\gl}{p}{\gatr{\pb G}{\gl\pb\Of}})\gG_p(\pb\Ppf_\gl)\,\baszer\label{eq:limitingmap_caltrafo_secord}
\end{align}
of $\la f_{\gl,p}\ra$ and $\la S_{\gl,p}\ra$, respectively. From the explicit form \eqref{eq:expo_xila_so_s} of $\gG_p(\pb\Ppf_\gl)$, we can deduce that $\gG_p(\pb\Ppf_\gl)\,\baszer$ and therefore also the lifts \eqref{eq:limitingmap_caltrafo_secord} are bounded from below with respect to $|\cdot|$ on $\comp$. Using \eqref{eq:convergence_primofomegati} and the boundedness \eqref{eq:upbound_ex_spacelike_s} of $\gG_p(\pb\Ppf_\gl)$ on $\comp$, one easily finds that the inner product $\ipl f_{\gl,p},S_{\gl,p}\ipr$ tends to zero as $\sip$ is approached in $\comp$ and thus $\la f_{\gl,p}\ra$ approaches $\la S_{\gl,p}\ra$. Hence, the limit set of $\la f_{\gl,p}\ra$ agrees with that of $\la S_{\gl,p}\ra$. But since $\la S_{\gl,p}\ra$ is periodic, its limit set is equal to its image. To see that the image of $\la S_{\gl,p}\ra$ is given by \eqref{eq:formoflimitset_caltrafo_so}, we use again \eqref{eq:expo_xila_so_s} to find that
\begin{align*}
\ima\left(\gG_p(\pb\ppf_\gl)\la\baszer\ra\right)=&\left\{\la \ga V_++\bar\ga V_-+\beta W_++\frac{1}{\beta}W_-\ra~|~\ga\in\Com,~\beta\in\R^\times,~|\ga|^2=-\frac{\ipl W_+,W_-\ipr}{\ipl V_+,V_-\ipr}\right\}\\
=&\left(\la \baszer,\bastanu,\bastanv,\basinf\ra\cap\Li^{n+1}\right) \bs\{\la W_+\ra,\la W_-\ra\}.
\end{align*}
\end{proof}

A generic Darboux transform again converges to $\la f(s)\ra$, as proved in the following
\begin{theorem}
Let $(\la f\ra,\Qua)$ have a pole of second order at $s$. For $\gl\in\R^\times$ with $1-2\gl<0$, let $\LiSe[\la f_{\gl,p}\ra]$ be the limit set of the $\gl$-Calapso transform of $(\la \pb f\ra,\pb\Qua)$ normalized at $p\in\pb\gD\bsb$ and $(\la\fh\ra,\pb\Qua)$ be a $\gl$-Darboux transform of $(\la \pb f\ra,\pb\Qua)$ such that $\la \fh(p)\ra$ does not lie in $\LiSe[\la f_{\gl,p}\ra]$. Choose $\comp\subset\pb\gD\bsb$ such that $\comp\cub$ is compact with limit point $\sip$. Then, the restriction of $\la \fh\ra$ to $\comp$ has the limit $\la f(\sip)\ra$ at $\sip$.
\end{theorem}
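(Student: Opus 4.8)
\textit{Proof proposal.} The plan is to adapt the proof of Thm~\ref{theorem:darb_trafo_surface_fo} to this setting. Since $\la\fh(p)\ra$ is assumed not to lie in $\LiSe[\la f_{\gl,p}\ra]$, only the analogue of the ``generic'' alternative of that proof can occur, and I would carry the computation through the singular gauge transformation $\pb\hyfr=\la\pb F\pb R\ra$ of Sect~\ref{section:surf_pole_so}, for which $\gatr{\pb\hyfr}{\gl\pb\of}=\pb\ppf_\gl+(\text{bounded})$ is a pole form with $\ppf_\gl$ spacelike, hence of the first kind, because $1-2\gl<0$. Writing $\la\fh(q)\ra=\gG_q^{~p}(\gl\pb\of)\la\fh(p)\ra$ and using \eqref{eq:gauge_trafo_beha} together with the factorization \eqref{eq:factorization_into_surf} of Prop~\ref{prop:maintool_firstkind_surf}, I would obtain
\[
\la\fh(q)\ra=\pb\hyfr(q)\,\gG_q^{~p}(\pb\ppf_\gl)\,\la\xi_q\ra,\qquad \la\xi_q\ra:=\gG_q^{~p}\!\big(\Gtr{\pb\ppf_\gl}{p}{\gatr{\pb\hyfr}{\gl\pb\of}}\big)\,\pb\hyfr(p)^{-1}\la\fh(p)\ra,
\]
and then invoke Prop~\ref{prop:maintool_firstkind_surf}(1) to get $\la\xi_q\ra\to\la\xi_\infty\ra$ as $\comp\ni q\to\sip$, where $\la\xi_\infty\ra=\gG_s^{~p}(\Gtr{\pb\ppf_\gl}{p}{\gatr{\pb\hyfr}{\gl\pb\of}})\pb\hyfr(p)^{-1}\la\fh(p)\ra$. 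Note that $\gG_q^{~p}(\pb\ppf_\gl)$ itself does not converge, so the statement is not immediate. Passing to orthogonal lifts, fix null lifts $\xi_q\to\eta_\infty$.

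Next I would split $\Rmn$ into the $\pb\ppf_\gl$-invariant, mutually orthogonal sum $\mathbf V\oplus\mathbf W\oplus\mathbf N$, where $\mathbf V=\la\baszer-\bastanu,\basinf-\gl\bastanu\ra$ is spacelike (the radial part of $\pb\ppf_\gl$ acts on it by a rotation, the angular part by $0$), $\mathbf W=\la W_+,W_-\ra$ is Minkowski with $\ipl W_+,W_-\ipr=2(1-2\gl)<0$ (angular part a boost, radial part $0$), and $\mathbf N=\la\baszer,\bastanu,\bastanv,\basinf\ra^\perp$ is spacelike with $\pb\ppf_\gl|_{\mathbf N}=0$; all this is read off from \eqref{eq:expo_xila_so_s} and the eigenvalue data preceding it. By Thm~\ref{theorem:cal_trafo_so_posla_s}, the hypothesis $\la\fh(p)\ra\notin\LiSe[\la f_{\gl,p}\ra]$ translates into: either $\eta_\infty$ has a nonzero $\mathbf N$-component, or $\la\eta_\infty\ra\in\{\la W_+\ra,\la W_-\ra\}$. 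Since $\pb F(q)\to F(\sip)$ with $F(\sip)\baszer=f(\sip)$, and since $\pb R(q)$ magnifies the $\baszer$-component by $|\pb z(q)|^{-1}\to\infty$ while keeping the $\la\bastanu,\bastanv\ra$- and $\mathbf N$-components of bounded Euclidean size and sending the $\basinf$-component to $0$, the whole statement reduces to one estimate: with $\zeta_q:=\gG_q^{~p}(\pb\Ppf_\gl)\xi_q$ (bounded on $\comp$ by \eqref{eq:upbound_ex_spacelike_s} and $\xi_q\to\eta_\infty$), the $\baszer$-coefficient $-\ipl\basinf,\zeta_q\ipr$ of $\zeta_q$ must stay bounded away from $0$ as $\comp\ni q\to\sip$; granting this, $\la\fh(q)\ra=\pb F(q)\la\pb R(q)\zeta_q\ra\to F(\sip)\la\baszer\ra=\la f(\sip)\ra$.

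For that estimate I would write $\ipl\basinf,\zeta_q\ipr=\ipl\gG_p^{~q}(\pb\Ppf_\gl)\basinf,\eta_\infty\ipr+o(1)$ (using $\gG_q^{~p}(\pb\Ppf_\gl)^*=\gG_p^{~q}(\pb\Ppf_\gl)$ and $\xi_q\to\eta_\infty$), decompose $\basinf=\basinf_{\mathbf V}+\basinf_{\mathbf W}$ with $\basinf_{\mathbf W}$ a nonzero multiple of $W_++W_-$, and apply \eqref{eq:expo_xila_so_s} to get
\[
\ipl\basinf,\zeta_q\ipr=2\Re\!\big(\kappa_{\mathbf V}\,e^{i\theta(q)}\big)+\kappa_{\mathbf W}\big(b_-e^{\psi(q)}+b_+e^{-\psi(q)}\big)+o(1),
\]
where $\eta_\infty^{\mathbf W}=b_+W_++b_-W_-$, $\theta(q)\to+\infty$, $\psi(q)$ depends only on $\phi(q)$, $\kappa_{\mathbf W}\neq0$, and $|\kappa_{\mathbf V}|$ is a fixed multiple of $\|\basinf_{\mathbf V}\|\,\|\eta_\infty^{\mathbf V}\|$. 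The key point is that the two null conditions produce an amplitude gap: $\|\basinf\|^2=0$ forces $\|\basinf_{\mathbf V}\|^2=-\|\basinf_{\mathbf W}\|^2$, and $\|\eta_\infty\|^2=0$ with $\|\eta_\infty^{\mathbf W}\|^2=2b_+b_-\ipl W_+,W_-\ipr$ and $\ipl W_+,W_-\ipr<0$ forces $b_+b_-\ge0$, so $b_-e^{\psi}+b_+e^{-\psi}$ never vanishes and $|\kappa_{\mathbf W}(b_-e^{\psi}+b_+e^{-\psi})|\ge 2|\kappa_{\mathbf W}|\sqrt{b_+b_-}=:2\varepsilon$ by AM--GM; combining the identities then gives $\varepsilon^2-|\kappa_{\mathbf V}|^2=(\text{positive constant})\cdot\|\eta_\infty^{\mathbf N}\|^2$. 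Hence, when $\eta_\infty^{\mathbf N}\ne0$ the oscillating term, of amplitude $2|\kappa_{\mathbf V}|<2\varepsilon$, cannot cancel the boost term, so $|\ipl\basinf,\zeta_q\ipr|\ge 2(\varepsilon-|\kappa_{\mathbf V}|)-o(1)>0$ near $\sip$; when $\la\eta_\infty\ra\in\{\la W_+\ra,\la W_-\ra\}$ one has $\eta_\infty^{\mathbf V}=\eta_\infty^{\mathbf N}=0$, hence $\kappa_{\mathbf V}=0$ and $\ipl\basinf,\zeta_q\ipr$ reduces to a single nonzero exponential up to $o(1)$, again bounded away from $0$. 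Together with the previous paragraph this finishes the proof.

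I expect the estimate of the third step to be the main obstacle. In contrast with the first-order (degenerate) case, where the relevant primitive has a genuine rank-one limit, here $\gG_q^{~p}(\pb\ppf_\gl)$ converges only up to a residual rotation--boost, so the $\baszer$-coefficient of $\zeta_q$ oscillates and one must exclude the possibility that its oscillating rotational part annihilates its boost part. That it cannot is not visible term by term; it follows only by inserting the null condition on $\basinf$ (a feature of the frame $F$ chosen in Sect~\ref{section:surf_pole_so}) and the null condition on $\eta_\infty$ into $\|\basinf\|^2=\|\eta_\infty\|^2=0$ — and this is exactly where the hypothesis $\la\fh(p)\ra\notin\LiSe[\la f_{\gl,p}\ra]$, i.e.\ $\eta_\infty^{\mathbf N}\ne0$ or $\la\eta_\infty\ra=\la W_\pm\ra$, enters.
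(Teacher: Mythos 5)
Your proposal is correct, and its skeleton coincides with the paper's: both arguments gauge $\gl\pb\of$ by the singular frame $\pb\hyfr=\la \pb F\,\pb R\ra$, factor the primitive as $\gG_q^{~p}(\gatr{\pb\hyfr}{\gl\pb\of})=\gG_q^{~p}(\pb\ppf_\gl)\,\gG_q^{~p}(\Gtr{\pb\ppf_\gl}{p}{\gatr{\pb\hyfr}{\gl\pb\of}})$ via Prop \ref{prop:maintool_firstkind_surf}, and reduce the claim to showing that $\la u(q)\ra:=\gG_q^{~p}(\pb\ppf_\gl)\la\xi_q\ra$ stays away from $\la\basinf\ra$ near $\sip$ (which, for null lifts bounded above and below, is exactly your statement that the $\baszer$-coefficient $-\ipl\basinf,\zeta_q\ipr$ is bounded away from zero, since $\la\basinf\ra$ is the unique null line in $\basinf^\perp$). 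Where you genuinely diverge is in how this is established and then converted into convergence. The paper notes that $\la u\ra$ approaches $\la\ti u(q)\ra=\gG_q^{~p}(\pb\ppf_\gl)\la\eta_\infty\ra$, uses the equivalence \eqref{eq:away_from_nei} together with the limit-set description \eqref{eq:formoflimitset_caltrafo_so} to see that $\la\ti u\ra$ meets $\la\basinf\ra$ precisely when $\la\fh(p)\ra\in\LiSe[\la f_{\gl,p}\ra]$, and then invokes \cite[Lemma 4]{fu18p} as a black box to pass from avoidance to $\pb\hyfr\la u\ra\rightarrow\la f(\sip)\ra$. You instead make both steps explicit and self-contained: the invariant splitting $\Rmn=\mathbf V\oplus\mathbf W\oplus\mathbf N$ is correct (I checked that $\mathbf V$ is positive definite with Gram determinant $2\gl-1>0$, that $\ipl W_+,W_-\ipr=2(1-2\gl)<0$, that the three summands are mutually orthogonal and $\pb\ppf_\gl$-invariant, and that $\basinf_{\mathbf W}=\tfrac{-\gl}{2(1-2\gl)}(W_++W_-)\neq 0$), and your amplitude-gap identity $(2\varepsilon)^2-(2|\kappa_{\mathbf V}|)^2=\|\basinf_{\mathbf V}\|^2\,\|\eta_\infty^{\mathbf N}\|^2$ follows exactly as you say from $\|\basinf_{\mathbf V}\|^2=-\|\basinf_{\mathbf W}\|^2$ and $\|\eta_\infty\|^2=0$; the degenerate subcase $b_+b_-=0$ correctly collapses onto $\la\eta_\infty\ra\in\{\la W_\pm\ra\}$, which you treat separately. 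What your route buys is independence from \cite[Lemma 4]{fu18p} and from Thm \ref{theorem:cal_trafo_so_posla_s} (whose role is absorbed into the identification of the hypothesis with ``$\eta_\infty^{\mathbf N}\neq 0$ or $\la\eta_\infty\ra=\la W_\pm\ra$''), at the cost of a longer computation; it also makes visible \emph{why} the rotational oscillation in $\mathbf V$ can never cancel the boost contribution in $\mathbf W$, which the paper's softer topological argument hides. Two shared caveats, neither of which counts against you since the paper makes the same implicit assumptions: the boundedness of $\psi(q)$ requires $\phi(\comp)$ to be bounded (as already presupposed in the proof of Prop \ref{prop:maintool_firstkind_surf}), and the convergence $\pb F(q)\la\baszer+o(1)\ra\rightarrow\la f(\sip)\ra$ uses that the flat frame $F$ stays bounded up to $\sip$, which is the content of \eqref{eq:limit_singtrafo}.
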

\begin{proof}
Apart from a small subtlety, the proof is analogous to that of \cite[Prop 3]{fu18p}. Let $z$ be a coordinate adapted to $\sqrt{\Qua}$ and think of $p\in\pb\gD\bsb$ fixed. With $\la\Fh_p\ra:=\pb\hyfr(p)^{-1}\la\fh(p)\ra$, we use the factorization \eqref{eq:factorization_into_surf} to write
$$\la \fh\ra=\pb\hyfr \,\la u\ra~~~\text{where}~~~\la u\ra:= \gG^{~p}(\gatr {\pb\hyfr} {\gl\pb\of})\la \Fh_p\ra=\gG^{~p}(\pb\ppf_\gl)\gG^{~p}(\Gtr{\pb\ppf_\gl}p{\gatr{\pb\hyfr}{\gl\pb\of}})\la \Fh_p\ra.$$
The singular frame $ \hyfr$ has limit $\la f(\sip)\basinf^*\ra$ at $\sip$ by \eqref{eq:limit_singtrafo} while $\gG^{~p}(\gatr {\pb\hyfr} {\gl\pb\of})\la \Fh_p\ra$ does not have a limit at $\sip$. To show that nevertheless the restriction of $\la \fh\ra$ to $\comp$ has a limit at $\sip$, we want to apply \cite[Lemma 4]{fu18p}. Thus we convince ourselves that there is an $\ti r\in\R$ and a neighbourhood $U\subset S^n$ of $\la \basinf\ra$ such that
\begin{equation}
\forall q\in\comp~\text{with}~|\pb z(q)|\leq \ti r:~~~\la u(q)\ra\notin U.
\label{eq:condition_for_lemma_poslasurf}
\end{equation}
To prove this, we note that as one approaches $\sip$ in $\comp$, the map $\la u\ra$ approaches $\la \ti u\ra$ defined by
$$\la \ti u\ra:=\gG^{~p}(\pb\ppf_\gl)\gG_s^{~p}(\Gtr{\pb\ppf_\gl}p{\gatr{\pb\hyfr}{\gl\pb\of}})\la \Fh_p\ra$$
because, by \eqref{eq:convergence_primofomegati}, $\lqbc\ipl u(q),\ti u(q)\ipr=0$ for the lifts
$$u=\gG^{~p}(\pb\Ppf_\gl)\gG^{~p}(\Gtr{\pb\Ppf_\gl}p{\gatr{\pb G}{\gl\pb\Of}}) \Fh_p  ,~~~\ti u=\gG^{~p}(\pb\Ppf_\gl)\gG_s^{~p}(\Gtr{\pb\Ppf_\gl}p{\gatr{\pb G}{\gl\pb\Of}}) \Fh_p,$$
which are bounded from below with respect to $|\cdot|$. Under our assumption on $\la \fh(p)\ra$, this limiting map $\la \ti u\ra$ does not contain $\la \basinf\ra$ in its image. Namely, we have the equivalence
\begin{equation}
\la \ti u(q)\ra=\la\basinf\ra~~~\Leftrightarrow~~~\la \fh(p)\ra=\pb\hyfr(p) \gG_p^{~\sip}(\Gtr {\pb\ppf_\gl}{p}{\gatr{\pb\hyfr}{\gl\pb\of}})\gG_p^{~q}(\pb\ppf_\gl)\la \basinf\ra
\label{eq:away_from_nei}
\end{equation}
for all $q\in\pb\gD\bsb$. From \eqref{eq:formoflimitset_caltrafo_so} we see that $\pb\hyfr(p) \gG_p^{~\sip}(\Gtr {\pb\ppf_\gl}{p}{\gatr{\pb\hyfr}{\gl\pb\of}})\gG_p^{~q}(\pb\ppf_\gl)\la \basinf\ra$ is a point on $\LiSe[\la f_{\gl,p}\ra]$ while, by assumption, $\la \fh(p)\ra \notin \LiSe[\la f_{\gl,p}\ra]$. Thus, we can conclude from \eqref{eq:away_from_nei} that indeed $\la\basinf\ra$ does not lie in the image of $\la \ti u\ra$. Finally, since $\la u\ra$ approaches $\la \ti u\ra$ as one approaches $\sip$ in $\comp$, the limit set of $\la u\ra$ does not contain $\la \basinf\ra$ and so there is an $\ti r\in(0,r_0]$ such that \eqref{eq:condition_for_lemma_poslasurf} holds. We can thus apply \cite[Lemma 4]{fu18p} to complete the proof.
\end{proof}

The following theorem treats the case of those $\gl$-Darboux transforms for which $\la \fh(p)\ra$ lies on the limit set of the $\gl$-Calapso transform normalized at $p$. Since it can be proved in complete analogy to the first two parts of \cite[Thm 6]{fu18p}, we only sketch its proof very briefly.
\begin{theorem}
Let $(\la f\ra,\Qua)$ have a pole of second order at $s$ and $(\la \fh\ra,\pb\Qua)$ be a $\gl$-Darboux transform of $(\la \pb f\ra,\pb\Qua)$ with $1-2\gl<0$ and $\la \fh(p)\ra\in \LiSe[\la f_{\gl,p}\ra]$. For any $\comp\subset\pb\gD\bsb$ such that $\comp\cub$ is compact with limit point $\sip$, as one approaches $s$ in $\comp$, the surface $\la \fh\ra$ approaches the curvature sphere of $\la f\ra$ at $\sip$, but does not have a limit.
\end{theorem}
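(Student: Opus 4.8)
The plan is to run the argument of the first two parts of \cite[Thm 6]{fu18p} in the present surface setting, using the singular frame $\hyfr=\la FR\ra$ of Sect \ref{section:surf_pole_so} (with $F$ as in Lemma \ref{lemma:nice_frame_s}) together with the factorization \eqref{eq:factorization_into_surf} through the pure pole form $\ppf_\gl$ of \eqref{eq:form_of_xila_so_s}, which for $1-2\gl<0$ is spacelike and hence of the first kind. First I would fix $p\in\pb\gD\bsb$, put $\la\Fh_p\ra:=\pb\hyfr(p)^{-1}\la\fh(p)\ra$, and use \eqref{eq:gauge_trafo_beha} and \eqref{eq:factorization_into_surf} to write $\la\fh\ra=\pb\hyfr\,\la u\ra$ with $\la u\ra=\gG^{~p}(\pb\ppf_\gl)\,\gG^{~p}(\Gtr{\pb\ppf_\gl}{p}{\gatr{\pb\hyfr}{\gl\pb\of}})\la\Fh_p\ra$. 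Exactly as in the derivation of \eqref{eq:away_from_nei}, the hypothesis $\la\fh(p)\ra\in\LiSe[\la f_{\gl,p}\ra]$ combined with \eqref{eq:formoflimitset_caltrafo_so} says precisely that $\la w\ra:=\gG_s^{~p}(\Gtr{\pb\ppf_\gl}{p}{\gatr{\pb\hyfr}{\gl\pb\of}})\la\Fh_p\ra$ lies in $(\la\baszer,\bastanu,\bastanv,\basinf\ra\cap\Li^{n+1})\setminus\{\la W_+\ra,\la W_-\ra\}$.

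Next, passing to the orthogonal lifts $\pb G,\pb\Of,\pb\Ppf_\gl$ as in the proof of Thm \ref{theorem:cal_trafo_so_posla_s}, I would combine three facts: $\gG^{~p}(\Gtr{\pb\Ppf_\gl}{p}{\gatr{\pb G}{\gl\pb\Of}})$ has a limit at $\sip$ by Prop \ref{prop:maintool_firstkind_surf}; the maps $\gG_p^{~q}(\pb\Ppf_\gl)$ stay bounded on $\comp$ by \eqref{eq:upbound_ex_spacelike_s}; and the explicit formula \eqref{eq:expo_xila_so_s} keeps $\gG_p(\pb\Ppf_\gl)\,\baszer$ bounded away from $0$ in $|\cdot|$. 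Arguing as in \cite[Thm 6]{fu18p}, this shows that $\la\fh\ra$ has, near $\sip$, the same limiting behaviour as $\pb\hyfr\,\la\ti u\ra$, where $\la\ti u\ra:=\gG^{~p}(\pb\ppf_\gl)\la w\ra$.

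For the claim that $\la\fh\ra$ approaches the curvature sphere of $\la f\ra$ at $\sip$: by \eqref{eq:form_of_xila_so_s} the form $\ppf_\gl$ is $(\gL^2\la\baszer,\bastanu,\bastanv,\basinf\ra+\R\,\id)$-valued, so $\gG^{~p}(\pb\ppf_\gl)$, and hence $\la\ti u\ra$, stays in the $2$-sphere $\la\baszer,\bastanu,\bastanv,\basinf\ra\cap\Li^{n+1}$; since the factor $R$ of $\hyfr$ preserves $\la\baszer,\bastanu,\bastanv,\basinf\ra$ as well, the map $\pb\hyfr\,\la\ti u\ra$ takes values in $F(\la\baszer,\bastanu,\bastanv,\basinf\ra\cap\Li^{n+1})$. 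Because $F$ is smooth on all of $\gD$ and $F(\sip)\la\baszer,\bastanu,\bastanv,\basinf\ra$ is, by \eqref{eq:condition1_niceframe}, the curvature sphere of $\la f\ra$ at the umbilic $\sip$ (which there is independent of the curvature direction), this moving $2$-sphere converges to that curvature sphere, and hence so does $\la\fh\ra$.

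For the claim that $\la\fh\ra$ has no limit: by \eqref{eq:expo_xila_so_s} the radial factor $(r/r(p))^{\sqrt{1-2\gl}}=e^{i\sqrt{2\gl-1}\,\ln(r/r(p))}$ is, for $1-2\gl<0$, a rotation in the spacelike plane $\la V_+,V_-\ra$ whose angle runs off to infinity as $r\to0$, while $\la w\ra\neq\la W_\pm\ra$ forces $w$ to have a nonzero component in $\la V_+,V_-\ra$ (the Minkowski plane $\la W_+,W_-\ra$ meets $\Li^{n+1}$ only in $\la W_\pm\ra$); hence $\la\ti u\ra$ keeps circling on the limit $2$-sphere and does not converge at $\sip$. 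Transporting this oscillation through $\pb\hyfr$, which by \eqref{eq:limit_singtrafo} degenerates at $\sip$ to the rank-one map $\la f(\sip)\basinf^*\ra$, one extracts — exactly as in \cite[Thm 6]{fu18p} — sequences $q_n,q_n'\to\sip$ in $\comp$ along which $\la\fh\ra$ converges to two distinct points of the curvature sphere, one of them $\la f(\sip)\ra$, so that $\la\fh\ra$ does not have a limit as $q\to\sip$ in $\comp$. I expect this last step to be the main obstacle, and the reason for reducing to the already proved curve statement rather than computing directly: one has to control precisely how the oscillating directions of $\la\ti u\ra$ sit relative to the direction onto which $\pb\hyfr(q)$ blows up and the one it collapses, which is the content of the corresponding step of \cite[Thm 6]{fu18p}.
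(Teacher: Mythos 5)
Your overall architecture is the paper's: factor $\la\fh\ra=\pb\hyfr\,\la u\ra$ through the singular frame and the pure pole form $\ppf_\gl$, identify the hypothesis $\la\fh(p)\ra\in\LiSe[\la f_{\gl,p}\ra]$ with $\la w\ra$ lying on the $2$-sphere minus $\{\la W_\pm\ra\}$, and defer the delicate interaction between the oscillation of $\la\ti u\ra$ and the degenerating frame to the second part of \cite[Thm 6]{fu18p} — that is exactly how the paper handles the non-convergence half, and your identification of where the real work sits is correct.

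The gap is in the curvature-sphere half. You derive it from the claim that $\la\fh\ra$ has ``the same limiting behaviour'' as $\pb\hyfr\,\la\ti u\ra$, justified by the three boundedness facts (existence of $\lim_q\gG_q^{~p}(\Gtr{\pb\ppf_\gl}p{\gatr{\pb\hyfr}{\gl\pb\of}})$, boundedness of $\gG(\pb\Ppf_\gl)$ in the spacelike case, and boundedness from below of the lifts). These give $|u-\ti u|\rightarrow 0$ for bounded, bounded-below lifts, hence $\la u\ra$ and $\la\ti u\ra$ become projectively close — but this closeness does \emph{not} transfer through $\pb\hyfr=\la FR\ra$, whose projective distortion blows up like $|z|^{-2}$ at $\sip$ (since $R\baszer=\baszer/|z|$ and $R\basinf=|z|\basinf$). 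Along sequences where $\la\ti u\ra$ passes near $\la\basinf\ra$ one has $|R\ti u|\sim|z|$, so an error $|u-\ti u|$ that is merely $o(1)$, or even $O(|z|)$, is not enough to keep $\la FRu\ra$ near the moving sphere $F\la\baszer,\bastanu,\bastanv,\basinf\ra\cap\Li^{n+1}$: what one actually needs is $|u^\perp|=|(u-\ti u)^\perp|=o(|z|)$. This is precisely why the paper proves this half differently — it shows $\ipl\fh,N_i\circ\proj\ipr\rightarrow 0$ for a lift $\fh$ bounded from below, using the normal fields and, crucially, the refined estimate of Lemma \ref{lemma:nice_frame_s} that $\tfrac1{|z|}(\gatr{\hyfr}{\gl\of}-\ppf_\gl)$ is bounded, which upon integration yields the required decay rate for the component of $u$ orthogonal to $\la\baszer,\bastanu,\bastanv,\basinf\ra$. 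You invoke Lemma \ref{lemma:nice_frame_s} nominally when choosing $F$, but your argument never uses its quantitative content; as written, the step ``this moving $2$-sphere converges to the curvature sphere, and hence so does $\la\fh\ra$'' fails for the same reason you correctly flag in the non-convergence half.
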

\begin{proof}
Using the frame and lifts of normal fields $N_i$ of Lemma \ref{lemma:nice_frame_s}, one can show that $\ipl \fh,N_i\circ\proj\ipr$ converges to zero as one approaches $\sip$ inside $\comp$ for a lift $\fh$ that is bounded from below with respect to $|\cdot|$. Thus, $\la \fh\ra$ approaches the curvature sphere of $\la f\ra$ at the umbilic $\sip$. To show that both $\la f(\sip)\ra=\la F(s)\baszer\ra$ and $\la F(s)\basinf\ra$ are limit points of $\la \fh\ra$, one may use the form \eqref{eq:formoflimitset_caltrafo_so} of the limit set $\LiSe[\la f_{\gl,p}\ra]\ni \la \fh(p)\ra$ to conclude that there are sequences $(q_i)_{i\in\mathbb N}$ and $(\ti q_i)_{i\in\mathbb N}$ in $\comp$ with limit $\sip$ such that
$$\forall i\in\mathbb N:~~~\gG_p^{~q_i}(\pb\ppf_\gl)\gG_s^{~p}(\Gtr {\pb\ppf_\gl}{p}{\gatr{\pb\hyfr}{\gl\pb\of}})\pb\hyfr(p)^{-1}\la \fh(p)\ra=\la\baszer\ra,~~~\gG_p^{~\ti q_i}(\pb\ppf_\gl)\gG_s^{~p}(\Gtr {\pb\ppf_\gl}{p}{\gatr{\pb\hyfr}{\gl\pb\of}})\pb\hyfr(p)^{-1}\la \fh(p)\ra=\la\basinf\ra.$$
As in the second part of the proof of \cite[Thm 6]{fu18p} one can then show that $\la \fh(q_i)\ra$ converges to $\la f(\sip)\ra$ and $\la\fh(\ti q_i)\ra$ converges to $\la F(s)\basinf\ra$ as $i$ tends to infinity.
\end{proof}
%------------------------------------------------------------------------------------------------
%----------------------------------------- M O N O D R O M Y ------------------------------------
%------------------------------------------------------------------------------------------------
\subsubsection{Monodromy of transforms}\label{section:surf_monodromy_so_negla}
For $1-2\gl<0$, the pole form $\gatr \hyfr {\gl\of}$ is of the first kind. An expression for its monodromy is thus obtained from Prop \ref{prop:maintool_firstkind_surf}.

\begin{corollary}\label{cor:monodromy_sop}
Let $(\la f\ra,\Qua)$ have a pole of second order at $s$. Denote by $\of$ the 1-form associated to $(\la f\ra,\Qua)$. For $\gl\in\R$ with $1-2\gl<0$ and points $P\in\gD\bsb$ and $p\in\pb\gD\bsb$ with $\proj(p)=P$, the monodromy of $\gl\of$ with base point $P$ is
\begin{equation}
\mon_{P}(\gl\of)=\La \Pi_{\la W_+(\gl,p),W_-(\gl,p)\ra^\perp}+e^{-2\pi \sqrt{2\gl-1}}\Pi_{\la W_+(\gl,p)\ra}+e^{2\pi \sqrt{2\gl-1}}\Pi_{\la W_-(\gl,p)\ra}\Ra,
\end{equation}
where 
\begin{equation}
\la W_\pm(\gl,p)\ra=\pb\hyfr(p)\gG_{p}^{~\sip}(\Gtr{\pb\ppf_\gl}{p}{\gatr {\pb\hyfr} {\gl\pb\of}})\la W_\pm(\gl)\ra,~~~~W_\pm(\gl)=\sqrt{2\gl-1}\,\bastanv\pm(\gl\baszer+\basinf-\bastanu),
\label{eq:defi_wpm_s}
\end{equation}
are two points on the limit sphere of the $\gl$-Calapso transform normalized at $p$.

In particular, $\la W_\pm(\gl,p)\ra$ are the only null eigenspaces of $\mon_P(\gl\of)^j$ for all $j\in\mathbb N$.
\end{corollary}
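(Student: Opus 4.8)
The plan is to obtain the formula as an immediate consequence of Proposition~\ref{prop:maintool_firstkind_surf} applied to the pole form $\gatr{\hyfr}{\gl\of}$, and then to transport the result back along the gauge transformation $\hyfr$, which is trivial along every closed loop in $\gD\bsb$.

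First I would assemble the prerequisites from the opening of Section~\ref{section:surf_pole_so}. After the harmless rescaling of $\Qua$ making $\sqrt{\Qua}$ have residue $1$ at $\sip$, the singular frame $\hyfr=\la F\,R\ra$ is a \emph{single-valued} map $\gD\bsb\to\Pro O(\Rmn)$ — its rotational part has period $2\pi$ in $\phi$ — which is smooth away from $\sip$, and by \eqref{eq:form_of_tieta_so_s} the difference $\gatr{\hyfr}{\gl\of}-\ppf_\gl$ is bounded, with $\ppf_\gl$ the pure pole form \eqref{eq:form_of_xila_so_s}. For $1-2\gl<0$ the eigenvalues $\pm\sqrt{1-2\gl}$ of $\Ppfr$ are imaginary, so $\ppf_\gl$ is spacelike and hence of the first kind; thus $\gatr{\hyfr}{\gl\of}$ is a pole form of the first kind and both conclusions of Proposition~\ref{prop:maintool_firstkind_surf} apply.

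The computation then runs in three steps. (i) Since $\hyfr$ is single-valued, applying the gauge transformation law \eqref{eq:gauge_trafo_beha} to $\gga^*\gl\of$ with the map $\hyfr\circ\gga$, which takes the same value at both endpoints of the loop $\gga$, gives $\mon_P(\gl\of)=\pb\hyfr(p)\,\mon_P(\gatr{\hyfr}{\gl\of})\,\pb\hyfr(p)^{-1}$, using $\pb\hyfr(p)=\hyfr(P)$. (ii) Writing $A:=\gG_p^{~\sip}(\Gtr{\pb\ppf_\gl}{p}{\gatr{\pb\hyfr}{\gl\pb\of}})$, a well-defined element of $\Pro O(\Rmn)$ by the first part of Proposition~\ref{prop:maintool_firstkind_surf}, equation \eqref{eq:monodromy_first_kind} together with $\gG_s^{~p}=(\gG_p^{~\sip})^{-1}$ yields $\mon_P(\gatr{\hyfr}{\gl\of})=A\,\mon(\ppf_\gl)\,A^{-1}$. (iii) By \eqref{eq:monodromy_purepoleform}, $\mon(\ppf_\gl)=\la e^{-2\pi\Ppfi}\ra$; from the recorded eigenrelations $\Ppfi W_\pm=\pm\sqrt{2\gl-1}\,W_\pm$ — in which $W_+$ and $W_-$ are null, being eigenvectors of a boost with eigenvalues different from $1$, and span a nondegenerate, Minkowski, $2$-plane — this is exactly the boost fixing $\la W_+,W_-\ra^\perp$ pointwise and scaling $W_\pm$ by $e^{\mp 2\pi\sqrt{2\gl-1}}$. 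Conjugating it by the orthogonal map $\pb\hyfr(p)A$ carries the three summands $\Pi_{\la W_+,W_-\ra^\perp}$, $\Pi_{\la W_+\ra}$, $\Pi_{\la W_-\ra}$ onto those in the stated formula, with $\la W_\pm(\gl,p)\ra=\pb\hyfr(p)A\,\la W_\pm(\gl)\ra$, noting $\la W_+(\gl)\ra=\la W_+\ra$ and $\la W_-(\gl)\ra=\la W_-\ra$. Since $W_\pm(\gl)\in\la\baszer,\bastanu,\bastanv,\basinf\ra$ is null, Theorem~\ref{theorem:cal_trafo_so_posla_s} identifies $\la W_+(\gl,p)\ra$ and $\la W_-(\gl,p)\ra$ as two points on the limit sphere of $(\la f_{\gl,p}\ra,\pb\Qua)$.

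For the final assertion, fix $j\geq 1$. Then $(\mon_P(\gl\of))^j$ is again the boost fixing $\la W_+(\gl,p),W_-(\gl,p)\ra^\perp$ pointwise and scaling $W_\pm(\gl,p)$ by $e^{\mp 2\pi j\sqrt{2\gl-1}}$, where $1-2\gl<0$ ensures $\sqrt{2\gl-1}>0$, so these eigenvalues differ from $1$ and from each other. Its eigenspaces are therefore precisely $\la W_+(\gl,p)\ra$, $\la W_-(\gl,p)\ra$ and $\la W_+(\gl,p),W_-(\gl,p)\ra^\perp$; the last, being the orthogonal complement of a Minkowski $2$-plane in $\Rmn$, is positive definite and contains no null line, whereas $W_+(\gl,p)$ and $W_-(\gl,p)$ are null. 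Hence $\la W_+(\gl,p)\ra$ and $\la W_-(\gl,p)\ra$ are the only null eigenspaces of $(\mon_P(\gl\of))^j$. The only step that needs care is (i): one must check that $\hyfr$ is genuinely single-valued on $\gD\bsb$ despite its singularity at $\sip$, so that the loop-wise gauge transformation is legitimate; everything else is routine linear algebra over $\Rmn$ and direct quotation of Proposition~\ref{prop:maintool_firstkind_surf}.
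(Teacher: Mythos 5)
Your proposal is correct and follows essentially the same route as the paper, which states this corollary without a written proof beyond the remark that the expression ``is obtained from Prop \ref{prop:maintool_firstkind_surf}'' (exactly as in the proof of Cor \ref{cor:monodromy_sop_mink}); you have simply made explicit the conjugation by the single-valued gauge $\hyfr$, the application of \eqref{eq:monodromy_first_kind} and \eqref{eq:monodromy_purepoleform}, and the linear algebra showing $\la W_+,W_-\ra^\perp$ is positive definite. All of these checks are sound, including the point you flag about the single-valuedness of $\hyfr=\la F\,R\ra$ on $\gD\bsb$.
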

We remark that, by \eqref{eq:formoflimitset_caltrafo_so}, the points $\la W_\pm(\gl,p)\ra$ lie on the limit sphere, but not on the limit set of the $\gl$-Calapso transform normalized at $p$.
\begin{corollary}\label{cor:monodromy_secondorder}
Let $(\la f\ra,\Qua)$ have a pole of second order at $s$. No $\gl$-Calapso transform of $(\la \pb f\ra,\pb\Qua)$ with $1-2\gl<0$ can be pushed forward to the $j$-fold cover of $\gD\bsb$ for any $j\in\mathbb N$.
\end{corollary}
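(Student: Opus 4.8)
The plan is to combine Prop \ref{prop:monodromy_dar_cal_trafo} with Cor \ref{cor:monodromy_sop}, exactly along the lines of the proof of Cor \ref{cor:monodromy_firstorder}. First I would fix a point $p\in\pb\gD\bsb$ and recall from part (2) of Prop \ref{prop:monodromy_dar_cal_trafo} that the pushforward of $\la f_{\gl,p}\ra$ to the $j$-fold cover of $\gD\bsb$ exists if and only if \emph{every} point in the image of $\la f_{\gl,p}\ra$ is invariant under $\big(\mon_{\proj(p)}(\gl\of)\big)^j$.

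Next I would identify the invariant points of the powers of the monodromy. By Cor \ref{cor:monodromy_sop}, for $1-2\gl<0$ the monodromy $\mon_{\proj(p)}(\gl\of)$ acts with eigenvalues $e^{\pm 2\pi\sqrt{2\gl-1}}$ on the null lines $\la W_\pm(\gl,p)\ra$ and with eigenvalue $1$ on $\la W_+(\gl,p),W_-(\gl,p)\ra^\perp$. Since $\sqrt{2\gl-1}>0$, the $j$-th power has eigenvalue $e^{2\pi j\sqrt{2\gl-1}}>1$, so $\big(\mon_{\proj(p)}(\gl\of)\big)^j$ is never the identity and its only invariant null lines — hence its only invariant points in $S^n$ — are $\la W_+(\gl,p)\ra$ and $\la W_-(\gl,p)\ra$; this is precisely the content of the last sentence of Cor \ref{cor:monodromy_sop}.

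Finally, since $\la f_{\gl,p}\ra$ is an immersion of the connected surface $\pb\gD\bsb$, its image is a connected subset of $S^n$ containing more than two points and in particular is not contained in $\{\la W_+(\gl,p)\ra,\la W_-(\gl,p)\ra\}$. Therefore some point of the image fails to be fixed by $\big(\mon_{\proj(p)}(\gl\of)\big)^j$, and by the criterion above the pushforward to the $j$-fold cover does not exist. As this argument works for every $j\in\mathbb N$, the corollary follows. There is essentially no hard step: the only thing one must check carefully is that Cor \ref{cor:monodromy_sop} indeed pins down exactly two invariant points for every power of the monodromy (as opposed to possibly the whole orthogonal sphere for some exceptional $j$, which cannot happen here because the eigenvalues $e^{\pm 2\pi j\sqrt{2\gl-1}}$ are always $\neq 1$), and that the image of an immersed surface cannot degenerate to two points.
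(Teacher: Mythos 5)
Your proposal is correct and follows essentially the same route as the paper: the paper also invokes Prop \ref{prop:monodromy_dar_cal_trafo} together with Cor \ref{cor:monodromy_sop}, noting that the image of a (non-constant) $\gl$-Calapso transform contains more than the two points $\la W_\pm(\gl,p)\ra$ that are invariant under every power of the monodromy. Your additional check that the eigenvalues $e^{\pm 2\pi j\sqrt{2\gl-1}}$ are never $1$ is already packaged into the last sentence of Cor \ref{cor:monodromy_sop}, so nothing further is needed.
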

\begin{proof}
This follows readily from Prop \ref{prop:monodromy_dar_cal_trafo}: no $\gl$-Calapso transform is constant. Therefore, the image of any $\gl$-Calapso transform contains infinitely many points. But for any $j\in\mathbb N$ there are only two points, $\la W_\pm(\gl,p)\ra$, which are invariant under $(\mon_{\proj(p)}(\gl\of))^j$, by Cor \ref{cor:monodromy_sop}.

\end{proof}
\begin{corollary}
Let $(\la f\ra,\Qua)$ have a pole of second order at $s$. The pushforward of a $\gl$-Darboux transform $(\la \fh\ra,\pb\Qua)$ of $(\la\pb f\ra,\pb\Qua)$ with $1-2\gl<0$ to the $j$-fold cover of $\gD\bsb$ is well defined if and only if for any $p\in\pb\gD\bsb$
$$\la \fh(p)\ra\in \{\la W_+(\gl,p)\ra,\la W_-(\gl,p)\ra\}.$$

\end{corollary}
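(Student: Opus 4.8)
The plan is to obtain the statement as an immediate consequence of Prop~\ref{prop:monodromy_dar_cal_trafo}\,(1) together with Cor~\ref{cor:monodromy_sop}, so that the proof is essentially two lines. First I would fix a point $p\in\pb\gD\bsb$ and set $P=\proj(p)$. By Prop~\ref{prop:monodromy_dar_cal_trafo}\,(1), the pushforward of $\la\fh\ra$ to the $j$-fold cover of $\gD\bsb$ exists if and only if $\la\fh(p)\ra$ is invariant under $\big(\mon_{P}(\gl\of)\big)^j$; since $\mon_{P}(\gl\of)\in\Pro O(\Rmn)$ is invertible, a point of $S^n$ is invariant under $\big(\mon_{P}(\gl\of)\big)^j$ exactly when the corresponding null line is an eigenspace of $\big(\mon_{P}(\gl\of)\big)^j$.

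Next I would invoke Cor~\ref{cor:monodromy_sop}, which for $1-2\gl<0$ gives the diagonal form of $\mon_{P}(\gl\of)$, with eigenvalues $e^{\mp 2\pi\sqrt{2\gl-1}}$ on the null lines $\la W_\pm(\gl,p)\ra$ and eigenvalue $1$ on the spacelike subspace $\la W_+(\gl,p),W_-(\gl,p)\ra^\perp$, and which records that $\la W_+(\gl,p)\ra$ and $\la W_-(\gl,p)\ra$ are the only null eigenspaces of every power $\big(\mon_{P}(\gl\of)\big)^j$ (because $2\gl-1>0$ makes $e^{\mp 2\pi j\sqrt{2\gl-1}}\neq 1$ for all $j\in\mathbb N$, so the eigenvalue-$1$ eigenspace of $\big(\mon_{P}(\gl\of)\big)^j$ is precisely the spacelike $\la W_+(\gl,p),W_-(\gl,p)\ra^\perp$, which contains no null line). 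Combining this with the first step, the pushforward exists if and only if $\la\fh(p)\ra\in\{\la W_+(\gl,p)\ra,\la W_-(\gl,p)\ra\}$.

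Finally I would remark that this criterion is uniform in $p$: Prop~\ref{prop:monodromy_dar_cal_trafo}\,(1) applies verbatim to every $p\in\pb\gD\bsb$, so for each such $p$ the pushforward exists precisely when $\la\fh(p)\ra\in\{\la W_+(\gl,p)\ra,\la W_-(\gl,p)\ra\}$; consequently this membership holds for all $p$ simultaneously or for none of them, and the phrase ``for any $p$'' in the statement is unambiguous. There is no genuine obstacle in this proof --- it is a direct application of two results established earlier --- and the only point deserving a sentence of care is this last uniformity remark.
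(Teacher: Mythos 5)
Your proposal is correct and follows exactly the route of the paper, whose proof is simply the citation of Prop \ref{prop:monodromy_dar_cal_trafo} and Cor \ref{cor:monodromy_sop}; you merely spell out the details (projective invariance equals being a null eigenline, and the eigenvalue-$1$ eigenspace $\la W_+(\gl,p),W_-(\gl,p)\ra^\perp$ being spacelike) that the paper leaves implicit. The closing uniformity remark about the quantifier over $p$ is a sensible, if optional, addition.
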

\begin{proof}
Again, this follows from Prop \ref{prop:monodromy_dar_cal_trafo} and Cor  \ref{cor:monodromy_sop}.
\end{proof}

\begin{figure}[h]
	\centering
	~~~~~~\begin{subfigure}[t]{0.45\textwidth}
		\centering
		\includegraphics[height=100pt]{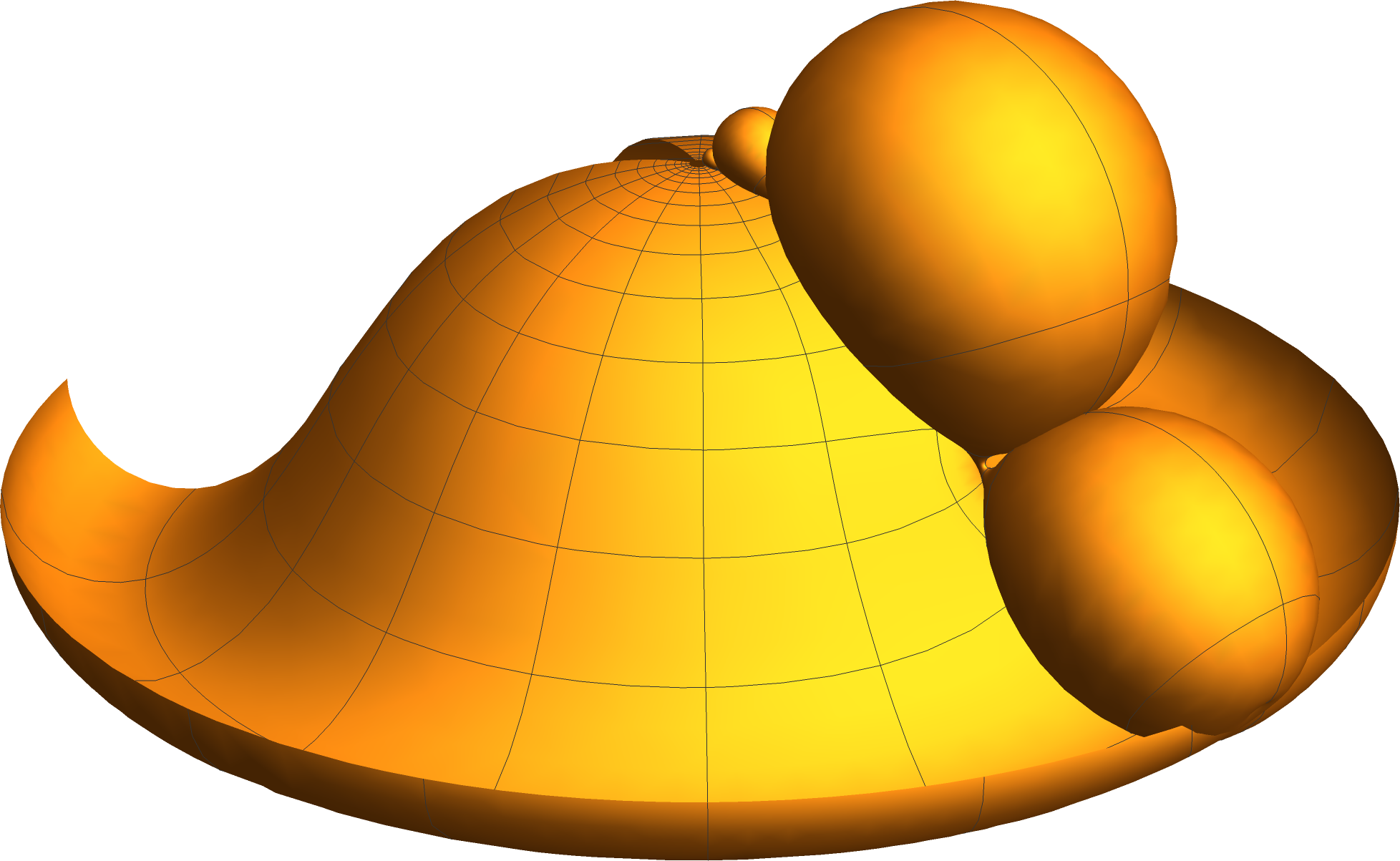}
	\end{subfigure}~~~~~~~~
	\begin{subfigure}[t]{0.45\textwidth}
		\centering
		\includegraphics[height=120pt]{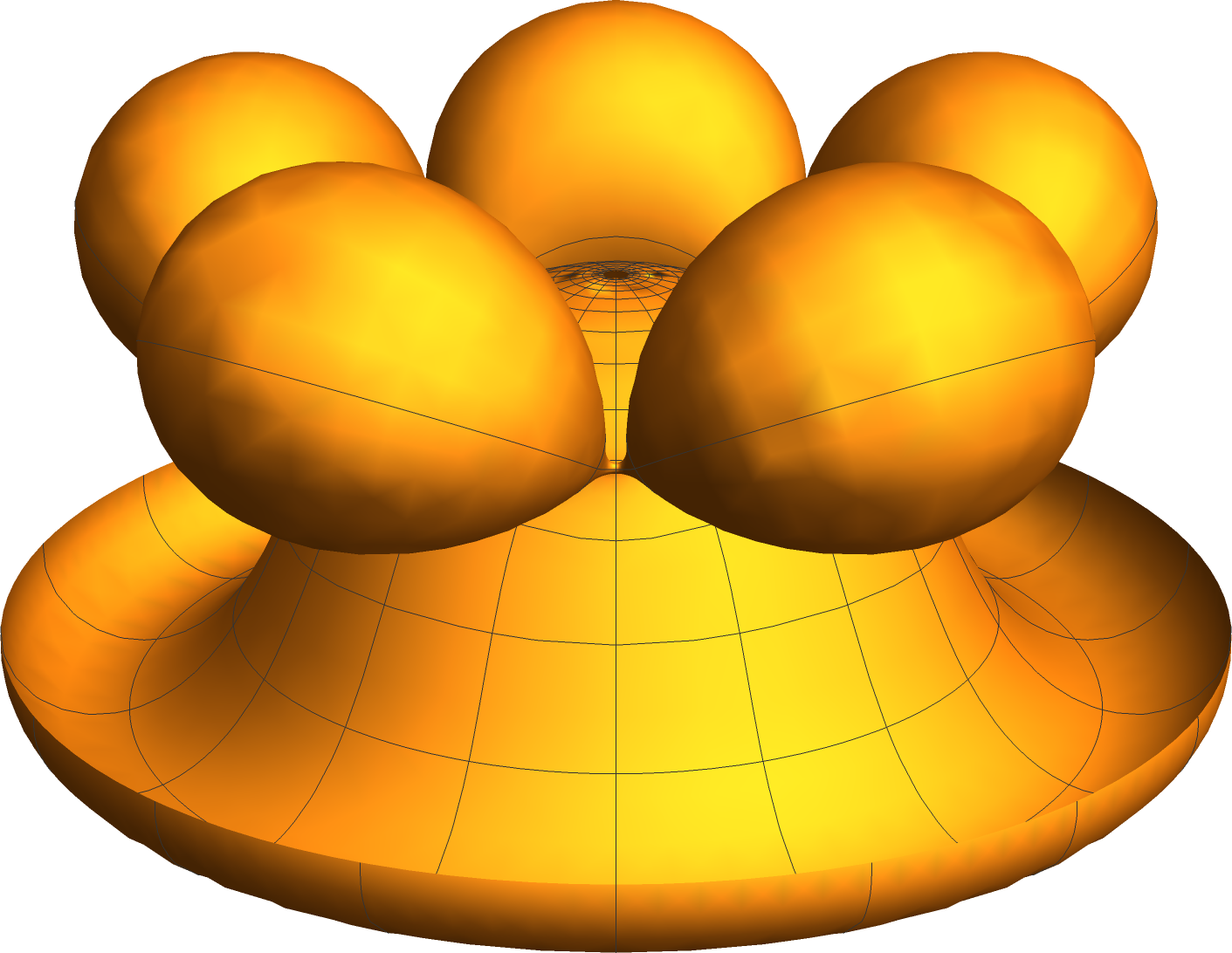}
	\end{subfigure}
	\caption{Darboux transforms of a surface of revolution with spectral parameter $\gl$ such that $1-2\gl<0$ and $1-2\gl>0$ on the left and  right hand side, respectively.}\label{fig:darsurfrev}
\end{figure}

\bibliographystyle{alpha}

\end{document}